\documentclass{article}

\usepackage[utf8]{inputenc}
\usepackage[T1]{fontenc}
\usepackage{amsmath,amssymb,amsthm,bbm,float,graphicx,geometry,lmodern,mathtools,parskip,setspace,subcaption}
\usepackage[colorlinks, linkcolor = blue!80!black, citecolor = blue!80!black,breaklinks, pdfauthor={Benedikt Jahnel, Daniel Kamecke, Christof Külske}]{hyperref}
\usepackage{mathrsfs}
\usepackage{dsfont}
\usepackage{enumerate}
\usepackage{nicefrac}
\usepackage{todonotes}
\usepackage{comment}
\usepackage{titling}
\usepackage{fullpage}
\usepackage{comment}
\usepackage{orcidlink}
\usepackage{enumitem}
\usepackage[dvipsnames]{xcolor}

\usepackage{tikz}
\usetikzlibrary{backgrounds}
\usetikzlibrary{patterns}
\usetikzlibrary{positioning, shapes.geometric}

\usepackage{caption}
\usepackage{graphicx}
\graphicspath{{Images/}}
\allowdisplaybreaks

\newcommand{\N}{\mathbb{N}}

\newtheorem{theorem}{Theorem}[section]
\newtheorem{lemma}[theorem]{Lemma}

\newtheorem{prop}[theorem]{Proposition}

\theoremstyle{definition}

\usepackage{nameref}

\makeatletter
\let\orgdescriptionlabel\descriptionlabel
\renewcommand*{\descriptionlabel}[1]{%
  \let\orglabel\label
  \let\label\@gobble
  \phantomsection
  \edef\@currentlabel{#1}%
  \let\label\orglabel
  \orgdescriptionlabel{#1}%
}

\newcommand{\R}{\mathbb{R}}
\renewcommand{\N}{\mathbb{N}}

\newcommand{\E}{\mathbb{E}}

\newcommand{\Z}{\mathbb{Z}}

\newcommand{\W}{\mathcal{H}}
\renewcommand{\P}{\mathbb{P}}

\newcommand{\setslash}{\,|\, }

\newcommand{\compl}{\mathsf{c}}
\newcommand*\diff{\mathop{}\!\mathrm{d}}

\pretitle{\centering\LARGE\scshape}
 \posttitle{\vskip 0.75cm}

 \predate{\vskip 0.75 cm \centering\large}
 \postdate{\par}

\usepackage[style = numeric, sorting=nyt, url = true, abbreviate=false, maxbibnames=9, sortcites=true, doi = false, backend = biber, giveninits = true, isbn=false]{biblatex}
\renewbibmacro{in:}{\ifentrytype{article}{}{\printtext{\bibstring{in}\intitlepunct}}}
\bibliography{refs/refs.bib}

\title{Lattice random-field Widom--Rowlinson models}

\thanksmarkseries{arabic}

\author{
Benedikt Jahnel \orcidlink{0000-0002-4212-0065}\thanks{Technische Universit\"at Braunschweig, Universit\"atsplatz 2, 38106 Braunschweig, Germany}\thanksgap{0.4ex} 
\thanks{Weierstrass Institute for Applied Analysis and Stochastics, Mohrenstr.\ 39, 10117 Berlin, Germany}\\ benedikt.jahnel@tu-braunschweig.de
\and
Daniel Kamecke \orcidlink{}\thanksmark{1}\\
daniel.kamecke@tu-braunschweig.de
\and
Christof K\"ulske \orcidlink{}\thanks{Ruhr-Universit\"at Bochum, 
Fakult\"at f\"ur Mathematik, 44801 Bochum,
Germany}\\
christof.kuelske@ruhr-uni-bochum.de
}

\date{\today}

\begin{document}

\maketitle
\begin{spacing}{0.9}
\begin{abstract}  

We consider the Widom--Rowlinson model on $\Z^d$ subject to a symmetric i.i.d.\ random field. We prove that for dimensions $d\le 2$ any non-trivial random field leads to an absence of a phase transition. In contrast, in dimensions $d\ge 3$ and for Gaussian random fields, phase-transition behavior of the model is maintained for sufficiently large densities of occupied sites. This extends the general picture known from the classical random-field Ising model to the random-field Widom--Rowlinson model. Following the general proof route of Aizenman--Wehr as well as Ding--Zhuang, our main contribution rests on adequate notions of contours and their associated generalized spin-flip operation to deal with hard-core repulsions.

\medskip
\noindent\footnotesize{{\textbf{AMS-MSC 2020}: 60K37, 82B44}

\smallskip
\noindent\textbf{Key Words}: Quenched disorder, long-range order, Imry--Ma phenomenon, hard core, phase transitions}
\end{abstract}
\end{spacing}

\section{Introduction}\label{sec_intro}
Phase transitions in lattice systems with quenched disorder have long attracted interest in both mathematics and statistical physics. A paradigmatic example is the \emph{random‐field Ising model} (RFIM) with a famous heuristics given by Imry and Ma~\cite{imry1975random} and later rigorously studied in seminal works by Bricmont and Kupiainen~\cite{bricmont1988phase} as well as Aizenman and Wehr~\cite{aizenman1990rounding}.  In the RFIM one observes that an arbitrarily weak random field destroys long‐range order in dimensions \(d\le 2\) (the Imry--Ma phenomenon), while a true ordering phase persists for sufficiently small disorder when \(d\ge 3\). For an overview and refined results, see also~\cite{bovier2006statistical,aizenman2020exponential,ding2024long}.

In this paper, we investigate an analogous question for the \emph{Widom--Rowlinson model} (WRM) on \(\mathbb Z^d\) under the influence of an independent i.i.d.~random field.  In the classical WRM each site of the lattice can be in one of three states \(\{+1, -1, 0\}\), where \(+\) and \(-\) carry mutually repelling particles and \(0\) denotes vacancy.  On the lattice, the WRM exhibits a symmetry‐breaking phase transition between an ordered phase (favoring one of the two particle types) and a disordered (vacant–dominated) phase; this was rigorously established by Higuchi and Takei~\cite{HiTa04} via a random-cluster representation.

Introducing quenched randomness into the WRM by adding an independent random chemical potential \(\{\eta_x\}_{x\in\mathbb Z^d}\), i.i.d.\ with zero mean, raises a natural question: Does the prediction of the Imry--Ma argument extend to this hard-core setting? Recent progress on disordered hard‐core systems~\cite{ayuso2026imry}
shows that even for the hard‐core lattice gas, an arbitrarily weak random field suffices to destroy phase coexistence in two dimensions.  This suggests that similar destruction of order might occur in the two‐dimensional random‐field WRMs, while higher dimensions could support long‐range order for sufficiently weak disorder.

Considering the  \emph{random‐field Widom–Rowlinson model} (RFWRM) on \(\mathbb Z^d\) with i.i.d.~random field, we make the following contribution.
\begin{itemize}
  \item We prove the existence of at least two distinct extremal Gibbs measures for any centered i.i.d.\ Gaussian random field in dimension \(d\ge 3\),
  when the density of occupied sites is sufficiently large, see Theorem~\ref{Thm:Nonuniqueness}. Our approach adapts contour estimates as in~\cite{DiZh24} to the RFWRM.
  \item In dimension \(d\le 2\), we establish in Theorem~\ref{Thm:Uniqueness} the absence of symmetry breaking for arbitrary centered i.i.d.\ random fields using the strategy in~\cite{aizenman1990rounding}. 
\end{itemize}

Technically, the hard-core constraints in the WRM require us to implement delicate new notions of contours that allow to establish boundary-order energetic gains in dimensions \(d\ge 3\). 
This has to be complemented with a generalized flip operation that does not only exchanges 
particle signs and correspondingly random field signs, 
but also converts vacancies on the boundary of the flip set into particles. Our definition achieves an energetic gain 
while preserving the hard-core constraint in the flipped configuration. 
Here, we restrict ourself to the Gaussian case in order to avoid unnecessary further technicalities, but an extension of the arguments to a larger class of centered fields seems reasonable.
In the uniqueness case \(d\le2\), the hard-core constraint in the WRM introduces strong correlations that are not present in the RFIM and in the associated strategy in~\cite{aizenman1990rounding}. We overcome these additional challenges by a suitable removal strategy along the boundary. 

The remainder of the paper is organized as follows.  In Section~\ref{sec_main} we introduce the RFWRM and state our main results.  Section~\ref{sec_exist} establishes existence and stochastic monotonicity properties of the WRM in the presence of a fixed random field.  Sections~\ref{sec_nonuniqueness}~and~\ref{sec_uniqueness} are devoted to the proofs of the main statements. Finally, we provide additional details in our Appendix.

\section{Setting and main results}\label{sec_main}
Let $d\ge 1$ and consider the hypercubic lattice $\mathbb{Z}^d$ with nearest-neighbor edges.  
We denote by $\omega=(\omega_x)_{x\in\mathbb{Z}^d}\in\{-1,0,1\}^{\mathbb{Z}^d}\eqcolon \Omega$ a {\em Widom--Rowlinson configuration}, where $\omega_x=0$ represents an {\em empty site} and $\omega_x=\pm1$ represent the two {\em particle types}. For $\omega, \tilde \omega\in \Omega$, we define $\omega_\Lambda=(\omega_x)_{x\in \Lambda}$ and $\omega_\Lambda\tilde \omega_{\Lambda^\compl}\in \Omega$ as the concatenation of $\omega_\Lambda$ and $\tilde \omega_{\Lambda^\compl}$. Further, we call $\zeta =(\zeta_x)_{x\in \mathbb{Z}^d}\in \R^{\Z^d}=:\W$ a (deterministic and inhomogeneous) {\em external field configuration}. For $\Lambda\Subset\mathbb{Z}^d$, i.e., $\Lambda$ is finite, and {\em boundary configuration} $\tilde \omega\in\Omega$, the {\em finite-volume Hamiltonian} is defined as
\begin{align}\label{eq:Hamiltonian}
H^{\zeta,\lambda}_\Lambda(\omega)
 &= -\lambda\sum_{x\in\Lambda}\omega^2_x
    + \infty\hspace{-0.4cm}\sum_{x\in \Lambda,\, y\in \Z^d\colon x\sim y}
        \mathds{1}\{\omega_x\omega_y=-1\}
    - \sum_{x\in\Lambda}\zeta_x \omega_x,
\end{align}
where $\lambda\in \R$ allows us to tune the density of occupied sites and we say that $x\sim y$, if $x$ and $y$ are neighbors in~$\Z^d$, i.e., $\|x-y\|_1=1$ with $\|\cdot\|_1$ the $1$-norm. The {\em finite-volume Gibbs measure} in $\Lambda\Subset\Z^d$ with boundary condition $\tilde \omega$ and field $\zeta$ is defined as 
\[
\mu^{\zeta,\lambda}_{\Lambda,\tilde \omega}(A)
  = \frac{1}{Z^\zeta_{\Lambda,\tilde \omega}} 
    \sum_{\omega_\Lambda\colon \omega_\Lambda\tilde \omega_{\Lambda^\compl}\in A}\exp(-H^{\zeta,\lambda}_\Lambda\big(\omega_\Lambda\tilde \omega_{\Lambda^\compl})\big),\qquad \text{for measurable }A\subset \Omega,
\]
where $Z^\zeta_{\Lambda,\tilde \omega}$ is chosen so that $\mu^{\zeta,\lambda}_{\Lambda,\tilde \omega}$ is a probability measure. A probability measure $\mu^{\zeta,\lambda}$ on $\Omega$ is called a \emph{Gibbs measure of the Widom--Rowlinson model}, i.e., $\mu^{\zeta,\lambda}\in \mathcal{G}( \zeta,\lambda)$, if it satisfies the {\em DLR equations}, i.e., 
\begin{equation}\label{eq:DLR}
\int f(\omega) \, \mu^{\zeta,\lambda}({\rm d}\omega) 
  = \int\int f(\omega)\, \mu^{\zeta, \lambda}_{\Lambda,\tilde \omega}({\rm d}\omega)\mu^{\zeta,\lambda}({\rm d}\tilde \omega),\qquad \text{ for }\Lambda\Subset\mathbb{Z}^d\text{ and local } f\ge 0.
\end{equation} 
Here, a function $f\colon \Omega\to\R$ is called local if there exists $\Delta\Subset\Z^d$ such that $f(\omega)=f(\omega_\Delta)$ for all $\omega\in \Omega$.

Further, let $\eta=(\eta_x)_{x\in\mathbb{Z}^d}$ be an i.i.d.\ {\em random field} with values in $\mathcal{H}$ defined on a probability space $(\Psi,\mathcal{F},\mathbb{P})$. Throughout this work, we assume that
\begin{enumerate}[label=$(\roman*)$]
    \item\label{Ass:etaSymmetry} $\eta$ is symmetric, i.e.,~$\eta_o$ has the same distribution as $-\eta_o$, and
    \item \label{Ass:etaNontrivial}$\eta_o$ has finite and positive variance $0<\E[\eta_o^2]<\infty$.
\end{enumerate}
Our following main results show that the Gibbs measures of the {\em random-field WRM}, i.e.,~$\mu^{\eta, \lambda}\in \mathcal{G}(\eta, \lambda)$, behave similar to the Gibbs measures of the classical random-field Ising model in the sense that, in dimension $d=2$, the disorder $\eta$ destroys the otherwise observed ferromagnetic phase-transition behavior, whereas in higher dimensions small fields still allow for phase transitions.  
\begin{theorem}[Non-uniqueness]\label{Thm:Nonuniqueness}
Let $d\ge 3$ and assume that $\eta$ is a standard Gaussian field. Then, there exists $C>0$ such that for all $\varepsilon>0$ and $\lambda\ge C\max\{1,\varepsilon\}$ we have that $\mathbb{P}$–almost surely \(
|\mathcal{G}(\varepsilon\eta,\lambda)|>1 
\).
\end{theorem}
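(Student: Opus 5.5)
We follow the Ding--Zhuang route: we compare the plus and minus boundary conditions and show that, with positive $\mathbb P$-probability uniformly in the volume, the origin is occupied by a $+$ particle under $+$ boundary conditions. Section~\ref{sec_exist} gives monotonicity and the existence of the limits $\mu^{\varepsilon\eta,\lambda}_{\pm}=\lim_{n}\mu^{\varepsilon\eta,\lambda}_{\Lambda_n,\pm}$, which are Gibbs measures. By the symmetry of $\eta$, the map $\eta\mapsto-\eta$ composed with the spin flip exchanges them. It therefore suffices to prove
\[
\mathbb P\Big(\mu^{\varepsilon\eta,\lambda}_{\Lambda_n,+}(\omega_o\neq +1)\ge \tfrac14\Big)\le c<\tfrac12
\]
uniformly in $n$. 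Then $\E[\mu_+(\omega_o=1)-\mu_-(\omega_o=1)]>0$. Since $\{\mu_+\neq\mu_-\}$ is a translation-invariant event, ergodicity of the i.i.d.\ field upgrades this to an almost-sure statement.

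\textbf{Contours.} Given $\omega\in\Omega$ with $+$ boundary on $\Lambda_n$, call a site \emph{bad} if it is not $+$ or has a non-$+$ neighbor. If $\omega_o\neq+1$, the $*$-connected component of bad sites around $o$ determines a set $A\ni o$. Its outer boundary $\partial A$ consists of vacancies adjacent to $+$ sites, and every $-$ site of $A$ is separated from $A^\compl$ by a layer of zeros, because of the hard core.

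\textbf{Generalized flip.} Define $\tau_A\omega$ by flipping the signs inside the region enclosed by the zero layer and turning the boundary vacancies into $+$ particles. Here we must choose the set carefully, taking only those vacancies whose neighbors are all $+$ or $0$ after the flip, so that no $+/-$ adjacency is created. Each such vacancy gains $\lambda$. The isoperimetric structure should then give an energy gain of at least $c\lambda|\partial A|$, where the number of admissible converted vacancies is comparable to $|\partial A|$. The field term changes by $\pm 2\varepsilon\sum_{x\in A}\eta_x\omega_x$ plus boundary corrections of size $\varepsilon\sum_{\partial A}|\eta_x|$. The latter are absorbed by $\lambda\ge C\varepsilon$ after a large-deviation bound over $\partial A$. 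The map $\tau_A$ must also be injective up to a multiplicity of at most $e^{c'|\partial A|}$, which the condition $\lambda\ge C$ absorbs.

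\textbf{Disorder control.} Following Ding--Zhuang, we compare the measure with $\eta$ and with $\eta^A$, where $\eta$ is flipped on $A$. The probability that $\omega_o\ne+$ is bounded by a sum over contours of $e^{-c\lambda|\partial A|}\exp(\Delta_A(\eta))$. Here $\Delta_A$ is the log ratio of partition functions restricted to $A$ under the field flip. We must show that $\sup_A\{\Delta_A(\eta)-\tfrac{c}{2}\lambda|\partial A|\}$ is small with high probability. Each $\Delta_A$ is Lipschitz in the Gaussian field, with $\ell^2$-norm bounded by $\varepsilon|A|^{1/2}$, and is centered by symmetry. Gaussian concentration together with Ding--Zhuang's coarse-graining chaining over contours of size $\ell$ then works when $d\ge3$. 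This gives a tail $\exp(-c\lambda^2\ell^{2(d-1)/d}/\varepsilon^2)$ against an entropy of $e^{C\ell}$, and the comparison closes via the coarse-grained chaining when $\lambda/\varepsilon$ is large.

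\textbf{Main obstacle.} We expect the main obstacle to be the flip definition: guaranteeing a boundary-proportional energy gain $\ge c\lambda|\partial A|$ while preserving the hard core and keeping $\Delta_A$ dependent only on the field in $A$, so that the Ding--Zhuang chaining applies verbatim. We would first try flipping the interior of the zero layer and converting exactly those boundary zeros all of whose neighbors lie outside the $-$ region. The key step would then be a combinatorial lemma showing that such zeros make up a positive fraction of $\partial A$.
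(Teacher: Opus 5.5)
There is a genuine gap, and it sits exactly where you place the main obstacle. You never produce a flip that preserves the hard core and gains order $\lambda|\partial A|$, and the construction you sketch fails for your choice of contour.

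Taking $A$ from the $*$-component of bad sites selects an \emph{outermost} interface. Such an $A$ can enclose $+$ regions touching the vacancy layer, and these become $-$ under the flip and block the conversion of vacancies. Concretely, let $\omega_o=-1$, $\omega_x=0$ for $\|x\|_1\in\{1,3\}$, and $\omega_x=1$ for $\|x\|_1=2$ and for $\|x\|_1\ge 4$. This configuration is hard-core admissible. Your component of bad sites is $\{\|x\|_1\le 4\}$, and the vacancy layer adjacent to $+$ sites is $\{\|x\|_1=3\}$. Flipping inside it turns the shell $\{\|x\|_1=2\}$ into $-$. Every site with $\|x\|_1\in\{1,3\}$ has a neighbour in that shell, so not a single vacancy is admissible for conversion. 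The energy gain is zero, and the proposed ``positive fraction'' lemma is false.

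The missing idea is to work with the \emph{innermost} interface. Let $B_\omega$ be the maximal $1$-connected set of non-$+$ sites containing $o$, and let $A_\omega=\bar B_\omega$ be its $2$-closure. Then:
\begin{itemize}
\item $B_\omega$ contains no $+$ site and all its outside neighbours are $+$, so $\partial^1 A_\omega\subset B_\omega$ consists only of zeros.
\item Points of $A_\omega\setminus B_\omega$ are $2$-enclosed, so none of them is a nearest neighbour of $\partial^1 A_\omega$.
\item Hence every neighbour of a boundary site inside $A_\omega$ lies in $B_\omega$, is non-$+$, and becomes non-$-$ after $\omega\mapsto-\omega$. Therefore \emph{all} of $\partial^1 A_\omega$ can be set to $+$. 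With the $1$-closure this breaks; compare the isolated vertex in Figure~\ref{fig:ConfigurationFlipped}.
\end{itemize}
This gives a gain of exactly $\lambda|\partial^1 A_\omega|$. For fixed $A$ the flip is injective with inverse $\iota_A$, so no multiplicity factor arises. Moreover, $\mathfrak{A}_2$ is precisely the class over which both the Peierls count (via $d$-connectivity of $\partial^2 A$) and the coarse-graining are carried out.

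Two secondary points.

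First, under the joint flip $(\omega,\eta)\mapsto(\omega^A,\eta^A)$ there is no bulk term $\pm 2\varepsilon\sum_{x\in A}\eta_x\omega_x$; it cancels identically. What remains is $\sum_{x\in\partial^1 A}(-\lambda+\varepsilon\eta_x)$ together with the ratio $Z^{\varepsilon\eta^A}_{\Lambda,+}/Z^{\varepsilon\eta}_{\Lambda,+}$ of partition functions on all of $\Lambda$ (Lemma~\ref{Lemma:DensityQuotient}). This ratio depends on all of $\eta_\Lambda$, and it does not need to depend only on $\eta_A$. The paper obtains concentration conditionally on $\eta_{A^\compl}$, using Lipschitz constant $2\varepsilon\sqrt{|A|}$ in $\eta_A$ and mean zero by symmetry. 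It handles increments by noting that $(\eta^A,\eta^B)$ has the same law as $(\eta^{A\ominus B},\eta)$. So that part of your ``obstacle'' is not one.

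Second, your single-contour tail is too optimistic. For $|\partial^1 A|=\ell$ one only has $|A|\le C\ell^{d/(d-1)}$, hence a tail of $\exp(-c\lambda^2\ell^{(d-2)/(d-1)}/\varepsilon^2)$. This loses against the entropy $\mathrm{e}^{C\ell}$, which is exactly why the Fisher--Fr\"ohlich--Spencer coarse-graining is indispensable rather than a mere refinement.
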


\begin{theorem}[Uniqueness]\label{Thm:Uniqueness}
Let $d\le 2$ and $\lambda\in \R$. Then, $\mathbb{P}$–almost surely
\(
|\mathcal{G}(\eta,\lambda)|=1 
\).
\end{theorem}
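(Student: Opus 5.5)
We follow the Aizenman--Wehr strategy, adapted to the hard-core constraint. First, we will use the stochastic monotonicity of the WRM in a fixed field (Section~\ref{sec_exist}). Under the order $-1\le 0\le 1$ and monotone specifications, this provides maximal and minimal Gibbs measures $\mu^{\eta,+}$ and $\mu^{\eta,-}$. They arise as limits of the finite-volume measures with all-plus and all-minus boundary conditions. Every $\mu\in\mathcal{G}(\eta,\lambda)$ is sandwiched between them, so uniqueness is equivalent to $\mu^{\eta,+}(\omega_x)=\mu^{\eta,-}(\omega_x)$ for all $x$. Since $\mu^{\eta,+}\succeq\mu^{\eta,-}$, equality of one-point marginals at all sites forces equality of the measures. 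Indeed, by stochastic domination it suffices to check the increasing events $\{\omega_x\ge 0\}$ and $\{\omega_x=1\}$. This can be reduced to the magnetizations together with the occupation densities $\mu(\omega_x^2)$. We will treat these via the analogous monotonicity of the occupation variable, or via a coupling in which $\omega^-\le\omega^+$ pointwise. By translation covariance and ergodicity of the i.i.d.\ field, the quantity $m(\eta)=\E[\mu^{\eta,+}(\omega_o)-\mu^{\eta,-}(\omega_o)]\ge 0$ is deterministic in the relevant sense. It thus suffices to show $m=0$.

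Next, define the free-energy difference
\[
F_L(\eta)=\log Z^{\eta}_{\Lambda_L,+}-\log Z^{\eta}_{\Lambda_L,-},
\]
with a slightly enlarged box handling the boundary, and let $G_L=\E[F_L\mid \eta_{\Lambda_L}]$, conditioning to remove boundary-layer randomness. Two bounds will be proved.
\begin{enumerate}[label=(\alph*)]
\item \emph{Upper bound:} $|F_L|\le C|\partial\Lambda_L|$ deterministically. The hard-core constraint makes the naive boundary-swap argument fail, since flipping the boundary from $+$ to $-$ may violate $\omega_x\omega_y\neq-1$. Here we use the removal strategy: we compare both partition functions with the one where a boundary layer of width one is forced vacant. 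Forcing vacancy on $O(|\partial\Lambda_L|)$ sites changes $\log Z$ by at most $|\partial\Lambda_L|(|\lambda|+C+\max|\eta_x|)$. To keep the bound deterministic, we condition the field on the boundary layer, so that only the interior field enters $G_L$. By symmetry of the field this also gives $\E[G_L]=0$.
\item \emph{Lower bound on fluctuations:} $\partial G_L/\partial\eta_x=\mu^{\eta}_{\Lambda_L,+}(\omega_x)-\mu^{\eta}_{\Lambda_L,-}(\omega_x)$, whose mean converges to $m$ as $L\to\infty$ in the interior. A martingale CLT with respect to the sites-by-site filtration, as in Aizenman--Wehr, then shows that $G_L/\sqrt{|\Lambda_L|}$ converges in law to a centered Gaussian with variance at least $m^2\E[\eta_o^2]$. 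The independent-increment structure, together with convexity of $G_L$ in each $\eta_x$, provides the needed variance lower bound.
\end{enumerate}
In $d\le2$ we have $|\partial\Lambda_L|=O(\sqrt{|\Lambda_L|})$ at best. In $d=2$ the Aizenman--Wehr argument is subtle precisely because $|\partial\Lambda_L|\asymp L=\sqrt{|\Lambda_L|}$. They handle this by using symmetry to show the limit is symmetric and combining it with an exponential-tail property that contradicts a nondegenerate Gaussian. We would copy that argument verbatim once (a) and (b) are established. In $d=1$ the contradiction is immediate.

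The main obstacle is step (a): obtaining a boundary-order bound for $F_L$ that is uniform in the field. The hard core produces long-range correlations, so the boundary condition influences the system differently than in the Ising case. The plan is to insert a vacant layer, at explicit cost $e^{\pm c|\partial\Lambda_L|}$ with field-dependent terms absorbed by conditioning, after which the $+$ and $-$ boundary conditions become irrelevant. A secondary technical point is verifying that the derivative in (b) indeed converges to the magnetization difference $m$ and not to an occupation difference. This follows because the field couples linearly to $\omega_x$.
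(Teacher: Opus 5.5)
The reduction to $m=\E[\tau_o(\eta)]=0$ is fine. In fact $\tau_x=[\mu_+^\eta(\sigma_x=1)-\mu_-^\eta(\sigma_x=1)]+[\mu_-^\eta(\sigma_x=-1)-\mu_+^\eta(\sigma_x=-1)]$ is a sum of two nonnegative terms, so you need no monotonicity of $\omega_x^2$, which does not hold anyway. Step (a) is also correct, and in finite volume it is the easy part. One has $Z^\eta_{\Lambda_L,\mathbf 0}\le Z^\eta_{\Lambda_L\cup\partial^\star\Lambda_L,\pm}\le Z^\eta_{\Lambda_L,\mathbf 0}\prod_{x\in\partial^\star\Lambda_L}(1+2e^{|\lambda|+|\eta_x|})$, and averaging the layer field gives a deterministic surface bound and $\E[G_L]=0$; this is the same decoupling the paper uses.

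The genuine gap is step (b). First, $G_L$ is not convex in $\eta_x$: $\partial_x^2F_L$ is the difference of the variances of $\omega_x$ under the plus and minus finite-volume measures and has no sign. Aizenman--Wehr use monotonicity together with a bounded second derivative. That yields a conditional-variance bound $\gamma(r)$ with $\gamma(r)>0$ for $r>0$, not $r^2\E[\eta_o^2]$; the inequality $\mathrm{Var}(g(\eta_o))\ge(\E[g'(\eta_o)])^2\E[\eta_o^2]$ for increasing $g$ holds for Gaussians but fails, e.g., for $\pm1$-valued fields. Second, and more seriously, the martingale argument needs a law of large numbers for the normalized sum of conditional variances, and a bound on the mean of $\partial_xG_L$ does not supply it. Aizenman--Wehr get this from the ergodic theorem. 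That is available only because their $F_\Lambda$ is built from infinite-volume states, so that $\partial_xF_\Lambda(\zeta)=\E[\tau_x(\zeta_\Lambda\eta_{\Lambda^\compl})]$ with $\tau$ translation covariant; these are hypotheses (i)--(ii) of Proposition~\ref{Prop:Properties_translationcovariant}. Your $\partial_xG_L$ depends on $L$ and on where $x$ sits relative to $\partial\Lambda_L$, so these hypotheses fail and the argument cannot be copied verbatim. Note also that their result only gives a lower bound on exponential moments, not convergence in law to a Gaussian.

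The paper closes this gap by working in infinite volume. It sets $F_\Lambda(\zeta)=\E[G_{\Lambda,+}(\zeta,\eta)-G_{\Lambda,-}(\zeta,\eta)]$, with $G_{\Lambda,\pm}$ the log-moment generating function of $\sum_{x\in\Lambda}\zeta_x\sigma_x$ under $\mu_\pm^{\mathbf 0_\Lambda\eta_{\Lambda^\compl}}$. Then (i)--(iv) follow from the finite-perturbation identity and the fluctuation bound is imported directly. The model-specific work moves into the surface bound, proved via $G_{\Lambda,+}(\zeta,\tilde\zeta)=G_{\Lambda,-}(-\zeta,-\tilde\zeta)$, the DLR equation in $\Lambda\cup\partial^\star\Lambda$, dropping the hard core between $\Lambda$ and $\partial^\star\Lambda$, and the invariance of the field-free Hamiltonian in $\Lambda$ under $\omega_\Lambda\mapsto-\omega_\Lambda$.

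If you want to stay in finite volume, the missing idea is the comparison $\mu^\eta_{\Lambda,+}\succeq\mu^\eta_+$ and $\mu^\eta_{\Lambda,-}\preceq\mu^\eta_-$. This gives $\partial_xG_L\ge\E[\tau_x(\eta)\mid\eta_{\Lambda_L}]$ for every $L$, rather than mere convergence of the mean. You would then have to rewrite the Aizenman--Wehr argument to work with this one-sided bound, including an ergodic step for the conditional expectations of $\tau_x$ given the lexicographic past. That is real work, not a citation. Finally, $d=2$ needs no extra symmetry or tail argument: a deterministic bound $|G_L|\le C\sqrt{|\Lambda_L|}$ against $\E[\exp(tG_L/\sqrt{|\Lambda_L|})]\ge e^{ct^2}$ for all $t$ is already a contradiction.
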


Here, and subsequently $|\cdot|$ denotes cardinality. Before we present the proofs in Sections~\ref{sec_nonuniqueness} and~\ref{sec_uniqueness}, we collect properties of existence and monotonicity of the WRM in the presence of an inhomogeneous field in the following section. 

\section{WRMs with fixed disorder}\label{sec_exist}
We focus on finite-volume Gibbs measures with {\em extremal boundary conditions}, i.e., $\smash{\mu^{\zeta,\lambda}_{\Lambda,\pm}}\coloneq\smash{\mu^{\zeta,\lambda}_{\Lambda,\mathbf{\pm 1}}} $ where $\mathbf{+1},\mathbf{-1},\mathbf{0}\in \Omega$ denote the configurations that are constant equal to $+1$, $-1$ and $0$. A key advantage in working with the WRM is that we can compare different Gibbs measures with the help of stochastic domination. In particular, this implies that the Gibbs measures $\smash{\mu^{\zeta,\lambda}_{\Lambda,\pm}}$ converge monotonically to their associated  extremal infinite-volume Gibbs measure $\smash{\mu^{\zeta,\lambda}_{\pm}}$. These are exactly the candidates for proving non-uniqueness of Gibbs measures in Section~\ref{sec_nonuniqueness}. Also towards a proof of uniqueness in Section~\ref{sec_uniqueness}, they come in handy as they stochastically dominate all Gibbs measures and hence it will suffice to prove $\mu^{\zeta,\lambda}_{+}=\mu^{\zeta,\lambda}_{-}$.  

To be precise, our configuration space $\Omega$ can be equipped with a partial order and we write $\omega \le \tilde \omega$ if $\omega_x\le \tilde \omega_x $ for all $x\in \Z^d$. Further, we say that $f\colon \Omega\rightarrow \R$ is \textit{increasing} if it is measurable and if for all $\omega, \tilde \omega \in \Omega$ with $\omega \le \tilde \omega$ it holds $f(\omega)\le f(\tilde \omega)$. For measures $\nu$ and $\tilde \nu$ on $\Omega$, we say that $\nu$ is \textit{stochastically dominated} by $\tilde \nu$, or $\nu\preceq\tilde \nu $, if $\nu(f)\le \tilde \nu(f)$ for every increasing and integrable $f$, where we use the notation $\nu(f)=\int_\Omega f(\omega )\, \nu(\diff \omega)$. Our first statement recovers well-known monotonicities of the WRM in the presence of inhomogeneous fields.
\begin{lemma}[Monotonicity]\label{Lemma:finiteVolumeFKG}
    For $\Lambda\Subset \Z^d$, $\zeta\in\W$ and $\omega,\tilde \omega\in \Omega$ with $\omega\le \tilde \omega$, it holds that $\mu_{\Lambda,\omega}^{\zeta,\lambda}\preceq\mu_{\Lambda,\tilde \omega}^{\zeta,\lambda}$. If further $\Delta\subset \Lambda$, it holds that $\smash{\mu_{\Lambda,+}^{\zeta,\lambda}}\preceq\smash{\mu_{\Delta,+}^{\zeta,\lambda}}$ and $\smash{\mu_{\Delta,-}^{\zeta,\lambda}}\preceq\smash{\mu_{ \Lambda,-}^{\zeta,\lambda}}$.
\end{lemma}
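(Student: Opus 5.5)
The plan is to apply Holley's inequality on the finite state space $\{-1,0,1\}^\Lambda$, which is a distributive lattice under the coordinatewise order, with $\max$ and $\min$ as lattice operations. Write the unnormalized weights as
\[
\pi_{\omega}(\sigma)=\mathbb{1}\{\sigma\omega_{\Lambda^\compl}\text{ admissible}\}\exp\Big(\lambda\sum_{x\in\Lambda}\sigma_x^2+\sum_{x\in\Lambda}\zeta_x\sigma_x\Big),
\]
where admissible means there are no neighboring $\pm1$ pairs touching $\Lambda$. The target is the Holley condition
\[
\pi_{\tilde\omega}(\sigma\vee\tau)\,\pi_{\omega}(\sigma\wedge\tau)\ \ge\ \pi_{\tilde\omega}(\tau)\,\pi_\omega(\sigma)\qquad\text{for all }\sigma,\tau\in\{-1,0,1\}^\Lambda,
\]
with $\sigma$ paired with the smaller boundary condition $\omega$ and $\tau$ with the larger $\tilde\omega$. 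It also requires that $\mu_{\Lambda,\omega}^{\zeta,\lambda}$ and $\mu_{\Lambda,\tilde\omega}^{\zeta,\lambda}$ be well defined, i.e.\ that the partition functions are positive. This holds whenever the boundary condition is itself admissible (the all-$0$ configuration in $\Lambda$ is then allowed), and I will assume this implicitly. Holley then yields $\mu_{\Lambda,\omega}^{\zeta,\lambda}\preceq\mu_{\Lambda,\tilde\omega}^{\zeta,\lambda}$ for increasing functions of $\sigma_\Lambda$. Functions depending also on the outside coordinates are handled because those coordinates are frozen at $\omega\le\tilde\omega$.

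First I treat the exponential part. The field term is linear and coordinatewise, so $\sigma_x+\tau_x=\max+\min$ gives equality. For the $\lambda$-term, $\{\sigma_x^2,\tau_x^2\}$ and $\{(\sigma_x\vee\tau_x)^2,(\sigma_x\wedge\tau_x)^2\}$ have equal sums; a check of the three values confirms this, since $\max$ and $\min$ just permute the pair. So the exponential factors match exactly and only the hard-core indicators matter.

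The main step is the hard-core indicators. I must show that if $\sigma\omega_{\Lambda^\compl}$ and $\tau\tilde\omega_{\Lambda^\compl}$ are admissible, then so are $(\sigma\vee\tau)\tilde\omega_{\Lambda^\compl}$ and $(\sigma\wedge\tau)\omega_{\Lambda^\compl}$. Suppose the max configuration has $+1$ at $x$ and $-1$ at a neighbor $y$. The $-1$ at $y$ forces both inputs to be $-1$ at $y$, since we take the max over $\Lambda$, and at a boundary site $\tilde\omega_y=-1$ forces $\omega_y=-1$. The $+1$ at $x$ comes from one of the inputs, so that input has a $+,-$ neighboring pair, a contradiction. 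The min case is symmetric, using $\omega\le\tilde\omega$ outside. This is the only place the order of the boundary conditions is used, and it is the crux I expect to need care with, in particular the matching of which boundary goes with max versus min.

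For the second statement I use the DLR-type consistency of finite-volume kernels. Since $\Delta\subset\Lambda$, $\mu^{\zeta,\lambda}_{\Lambda,+}(f)=\int \mu^{\zeta,\lambda}_{\Delta,\xi}(f)\,\mu^{\zeta,\lambda}_{\Lambda,+}(\diff\xi)$, where $\xi$ agrees with $+$ outside $\Lambda$. Every such $\xi$ satisfies $\xi\le\mathbf{+1}$, so the first part gives $\mu_{\Delta,\xi}^{\zeta,\lambda}(f)\le\mu_{\Delta,+}^{\zeta,\lambda}(f)$ for increasing $f$. Integrating gives $\mu^{\zeta,\lambda}_{\Lambda,+}\preceq\mu^{\zeta,\lambda}_{\Delta,+}$, and the minus case follows the same way. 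The consistency identity is the routine computation that conditioning $\mu_{\Lambda,+}$ on $\Lambda\setminus\Delta$ yields $\mu_{\Delta,\xi}$, because the Hamiltonians differ only by terms not involving $\omega_\Delta$. One subtlety is that $\mathbf{+1}$ need not be admissible against arbitrary $\xi$, but $\xi$ is sampled from an admissible measure, so $\mu_{\Delta,\xi}$ is well defined almost surely.
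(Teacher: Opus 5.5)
Your proof is correct, but it follows a different route from the paper. The paper defers to Higuchi--Takei, whose argument has two parts. It checks that the \emph{single-site} conditional distributions are monotone in the boundary condition, which is the local form of Holley's criterion. It then separately checks properties of the null sets of $\mu^{\zeta,\lambda}_{\Lambda,\omega}$. The paper's only new observation is that $\zeta_x$ enters the single-site law at $x$ alone, so an inhomogeneous field changes nothing.

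You instead verify the \emph{global} Holley lattice condition directly. The chemical-potential and field terms are modular and cancel exactly. Everything then reduces to the fact that admissible configurations are closed under coordinatewise $\vee$ and $\wedge$, with the larger boundary attached to $\vee$ and the smaller to $\wedge$, and you check this correctly. This is self-contained and makes it transparent why the field is harmless. Your nested-volume step via consistency of the finite-volume kernels is also fine.

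Two remarks. First, the paper's null-set issue reappears in your route in the choice of which Holley inequality you invoke. The textbook version, proved by a Markov-chain coupling, assumes strictly positive measures, and the hard-core weights are not strictly positive. You should cite the version valid for nonnegative weights on a finite distributive lattice. It follows in two lines from the Ahlswede--Daykin four functions theorem with $\alpha=\pi_\omega g$, $\beta=\pi_{\tilde\omega}$, $\gamma=\pi_{\tilde\omega} g$, $\delta=\pi_\omega$ for nonnegative increasing $g$. Your Holley condition together with $g(\sigma)\le g(\sigma\vee\tau)$ gives the hypothesis, and taking both sets equal to the whole lattice gives $\pi_\omega(g)\,Z^{\zeta}_{\Lambda,\tilde\omega}\le \pi_{\tilde\omega}(g)\,Z^{\zeta}_{\Lambda,\omega}$.

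Second, your assumption that the boundary condition be admissible is unnecessary. Since $H^{\zeta,\lambda}_\Lambda$ only involves pairs with an endpoint in $\Lambda$, the configuration $\mathbf{0}_\Lambda\omega_{\Lambda^\compl}$ has zero energy for every $\omega\in\Omega$, so $Z^{\zeta}_{\Lambda,\omega}\ge 1$ always. Your argument therefore proves the lemma for all $\omega\le\tilde\omega$ as stated, and the subtlety about $\mu^{\zeta,\lambda}_{\Delta,\xi}$ being well defined in your last paragraph disappears.
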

\begin{proof}
The proof is a minor generalization of the corresponding statement for homogeneous fields $\zeta\equiv h\in \R$, see for example~\cite[Lemma~2.3]{HiTa04}. Indeed, their proof consists in checking monotonicity of single-site conditional distributions under variation of  boundary conditions w.r.t.\ the partial order. As the external field $\zeta$ enters the single-site probability at $x$ only through its value at $x$, that is $\zeta_x$, the inhomogeneity of the field does not change the verification of monotonicity. Another part of their proof consists in checking certain properties of null-sets of $\mu_{\Lambda, \omega}^{\zeta,\lambda}$, which are not affected by an external field at all. 
\end{proof}

Next, we argue that the finite-volume Gibbs measures converge to infinite-volume Gibbs measures. We say that a sequence of measures $\mu_n$ on $\Omega$ \textit{converges locally} if $\nu_n(f)\xrightarrow{}\nu(f)$ for all local $f$.
\begin{prop}[Infinite-volume Gibbs measures]\label{Prop:Existence}
    Assume that $\Lambda_n\uparrow \Z^d$, i.e., $(\Lambda_n)_n$ is an increasing sequence of finite sets with $\bigcup_{n\in \N}\Lambda_n=\Z^d$. Then, for $\zeta \in\W$ and $\lambda\in \R$,
    \begin{enumerate}[label=(\roman*)]
        \item  there are $\mu_{\pm}^{\zeta,\lambda}\in \mathcal{G}(\zeta, \lambda)$ such that the sequence $\mu_{\Lambda_n,\pm }^{\zeta,\lambda}$ converges locally to $\mu_{\pm}^{\zeta,\lambda}$,
        \item for every $f$ local, $\zeta\mapsto \mu_{\pm}^{\zeta,\lambda}(f) $ is measurable when endowing $\W$ with the product-$\sigma$-algebra, and
        \item for every infinite-volume Gibbs measure $\nu^{\zeta,\lambda}$ of the WRM with external field $\zeta$, it holds that
        \begin{align*}
            \mu_{-}^{\zeta,\lambda}\preceq \nu ^{\zeta,\lambda}\preceq \mu_{+}^{\zeta,\lambda}.
        \end{align*}
    \end{enumerate}
\end{prop}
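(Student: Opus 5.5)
Our approach is the standard monotonicity argument for attractive specifications, based entirely on Lemma~\ref{Lemma:finiteVolumeFKG}. We first fix a local increasing function $f$ depending on coordinates in $\Delta\Subset\Z^d$. Since $\Lambda_n\subset\Lambda_{n+1}$, the second part of Lemma~\ref{Lemma:finiteVolumeFKG} gives that $n\mapsto \mu^{\zeta,\lambda}_{\Lambda_n,+}(f)$ is non-increasing and $n\mapsto\mu^{\zeta,\lambda}_{\Lambda_n,-}(f)$ is non-decreasing. Both sequences are bounded because $f$ takes finitely many values, so both limits exist. Cylinder indicators $\mathds{1}\{\omega_\Delta=\sigma_\Delta\}$ are not themselves increasing. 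However, the algebra of local functions on the finite ordered set $\{-1,0,1\}^\Delta$ is spanned by indicators of increasing sets $\mathds{1}\{\omega_\Delta\in U\}$ with $U$ an up-set, via inclusion–exclusion or Möbius inversion on the poset. Hence every local $f$ is a finite linear combination of increasing local functions, and $\mu^{\zeta,\lambda}_{\Lambda_n,\pm}(f)$ converges for every local $f$. The limits define consistent finite-dimensional marginals, so Kolmogorov's extension theorem (or compactness of $\Omega$ together with uniqueness of limit points) yields probability measures $\mu^{\zeta,\lambda}_\pm$. By construction these do not depend on the chosen sequence $(\Lambda_n)$, since any two exhausting sequences can be interlaced and the monotone limit is preserved.

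Next we show the DLR property \eqref{eq:DLR}. Fix $\Lambda\Subset\Z^d$ and a local $f\ge 0$. For $n$ large, so that $\Lambda\subset\Lambda_n$, the finite-volume measure $\mu^{\zeta,\lambda}_{\Lambda_n,+}$ satisfies the finite DLR identity $\mu_{\Lambda_n,+}(f)=\int \mu_{\Lambda,\tilde\omega}(f)\,\mu_{\Lambda_n,+}(\diff\tilde\omega)$ by the usual consistency of specifications. To pass to the limit we need $\tilde\omega\mapsto\mu^{\zeta,\lambda}_{\Lambda,\tilde\omega}(f)$ to be local (quasilocal suffices). Here the nearest-neighbour hard core helps: the weight depends on $\tilde\omega$ only through $\tilde\omega_{\partial\Lambda}$, where $\partial\Lambda$ is the outer boundary, and $\partial\Lambda$ is finite. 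One subtlety is the normalisation: $Z^\zeta_{\Lambda,\tilde\omega}>0$ for every $\tilde\omega$, because the all-vacant configuration in $\Lambda$ is always admissible. Hence $\mu^{\zeta,\lambda}_{\Lambda,\tilde\omega}(f)$ is a genuine local function of $\tilde\omega$, and local convergence lets us pass to the limit on both sides. This gives $\mu^{\zeta,\lambda}_\pm\in\mathcal G(\zeta,\lambda)$ and proves (i). The main obstacle lies here: we must make sure that boundary conditions $\tilde\omega$ that are themselves forbidden configurations cause no trouble. The positivity of $Z$ via the vacant configuration resolves this.

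For (ii), $\zeta\mapsto\mu^{\zeta,\lambda}_{\Lambda_n,\pm}(f)$ is a ratio of finite sums of exponentials of finitely many coordinates $\zeta_x$, $x\in\Lambda_n$, with positive denominator. It is therefore continuous in those coordinates and measurable with respect to the product $\sigma$-algebra. Pointwise limits of measurable functions are measurable, which gives (ii).

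For (iii), let $\nu\in\mathcal G(\zeta,\lambda)$ and let $f$ be local and increasing. By DLR, $\nu(f)=\int\mu^{\zeta,\lambda}_{\Lambda_n,\tilde\omega}(f)\,\nu(\diff\tilde\omega)$. Since $\mathbf{-1}\le\tilde\omega\le\mathbf{+1}$, the first part of Lemma~\ref{Lemma:finiteVolumeFKG} gives $\mu_{\Lambda_n,-}(f)\le\mu_{\Lambda_n,\tilde\omega}(f)\le\mu_{\Lambda_n,+}(f)$. Integrating and letting $n\to\infty$ yields $\mu_-^{\zeta,\lambda}(f)\le\nu(f)\le\mu_+^{\zeta,\lambda}(f)$. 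Finally, stochastic domination for all local increasing $f$ extends to all integrable increasing $f$ by a standard approximation: conditional expectations onto $\Delta\uparrow\Z^d$ are not monotone in general, so instead we use the equivalent characterisation through increasing events and a monotone class argument, or Strassen's theorem on the compact space $\Omega$.
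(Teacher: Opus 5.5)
Your proof is correct. For (ii) and (iii) it coincides with the paper's argument; the genuine difference lies in (i).

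\textbf{Part (i).} The paper outsources existence. It cites Georgii's general results (compactness of $\Omega$ plus quasilocality of the specification) to obtain \emph{some} sequence $\Delta_n\uparrow\Z^d$ along which $\mu^{\zeta,\lambda}_{\Delta_n,\pm}$ converges to a Gibbs measure. It then transfers convergence to an arbitrary exhausting $(\Lambda_n)$ via the sandwich $\Delta_{k_n}\subset\Lambda_n\subset\Delta_{\ell_n}$ and Lemma~\ref{Lemma:finiteVolumeFKG}.

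You instead use monotonicity directly along the given increasing sequence, so no subsequence extraction is needed. You build the limit by Kolmogorov extension and verify the DLR equations by hand. For this you use that $\tilde\omega\mapsto\mu^{\zeta,\lambda}_{\Lambda,\tilde\omega}(f)$ is local, and that $Z^\zeta_{\Lambda,\tilde\omega}>0$ for every boundary condition, even an infeasible one, thanks to the all-vacant configuration.

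The paper's route is shorter and gets the Gibbs property and sequence-independence in one stroke; its sandwich does not even need $(\Lambda_n)$ to be increasing. The price is that it relies on the general theory of quasilocal specifications, where the hard-core-specific points (finite range, positivity of partition functions for all boundary conditions) remain implicit. Your route is self-contained and makes exactly those points explicit. It also supplies the justification, via M\"obius inversion over up-sets, for the claim that every local function is a linear combination of increasing local functions, which the paper only asserts.

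\textbf{Part (iii).} Here you go slightly beyond the paper. The paper stops at local increasing $f$, although $\preceq$ is defined via all integrable increasing $f$. Of your two suggested extensions, Strassen's theorem is the clean one. To feed it, note that any continuous increasing $g$ is the uniform limit, as $\Delta\uparrow\Z^d$, of the local increasing functions $\omega\mapsto g(\omega_\Delta(\mathbf{-1})_{\Delta^\compl})$. Hence domination for local increasing functions gives domination for continuous increasing ones, and therefore a monotone coupling on the compact space $\Omega$ with its closed product order.

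A ``monotone class argument'' over increasing events is less direct, since that class is not closed under differences. If you want to avoid Strassen, use inner regularity together with the closed increasing hulls of compact subsets of an increasing event.
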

\begin{proof}
As for Item~$(i)$, by \cite[Proposition~4.9,~Proposition~4.15, and Theorem~4.17]{Ge11}, there exist a sequence $\Delta_n\uparrow \Z^d$ and an infinite-volume Gibbs measure $\smash{\mu_{\pm}^{\zeta,\lambda}}\in \mathcal{G}(\zeta, \lambda)$ such that $\smash{\mu_{ \Delta_n,\pm }^{\zeta,\lambda}}$ converges locally to $\smash{\mu_{\pm}^{\zeta,\lambda}}$. This local convergence can be generalized to arbitrary $\Lambda_n\uparrow \Z^d$. Indeed, we can find divergent sequences $(k_n)_n$ and $(\ell_n)_n$ in $\N$ with $\Delta_{k_n}\subset \Lambda_n\subset\Delta_{\ell_n}$ for all $n$. By Lemma~\ref{Lemma:finiteVolumeFKG}, for $f$ increasing and local, 
\begin{align*}
\mu_{\Delta_{\ell_n},+}^{\zeta,\lambda}(f)\le \mu_{\Lambda_n,+}^{\zeta,\lambda}(f) \le \mu_{\Delta_{k_n},+}^{\zeta,\lambda}(f).
\end{align*}
Since every local function can be represented as a linear combination of local and increasing functions, also $\smash{\mu_{\Lambda_n,\pm }^{\zeta,\lambda}}$ converges locally to $\smash{\mu_{\pm}^{\zeta,\lambda}}$.

As for Item~$(ii)$, by the previous convergence, for $f$ local, the mapping $\zeta\mapsto \smash{\mu_{+}^{\zeta,\lambda}(f)} $ is measurable as a limit of measurable functions. Finally, for Item~$(iii)$, by Lemma~\ref{Lemma:finiteVolumeFKG} and the DLR equations, for all local and increasing $f$ and all $n$ large enough,
    \begin{align*}
        \nu^\zeta(f)=\int_\Omega \mu^{\zeta,\lambda}_{\Lambda_n,\omega}(f)\, \nu^\zeta (\diff \omega)\le \mu_{\Lambda_n,+}^{\zeta,\lambda}(f),
    \end{align*}
    and analogously for $\mu_{\Lambda_n,-}^{\zeta,\lambda}$. The claim follows by sending $n$ to infinity.
\end{proof}

\section{Non-uniqueness in the RFWRM}\label{sec_nonuniqueness}
This section is devoted to the proof of Theorem~\ref{Thm:Nonuniqueness}, which is implied by the following more quantitative result. Assume that $\eta$ is a standard Gaussian field and consider the joint measure $$Q^\pm _{\Lambda,\varepsilon, \lambda}(A\times B)=\E[\mathds{1}_B(\eta) \mu_{\Lambda, \pm}^{\varepsilon\eta,\lambda}(A) ]$$
for $\varepsilon>0$ and $\Lambda\Subset\Z^d$. Let $\sigma\colon (\omega,\zeta)\mapsto \omega$ denote the canonical projection and $\sigma_o\colon (\omega,\zeta)\mapsto \omega_o$. 
\begin{theorem}[Magnetization bounds in joint measure]\label{Thm:3dPhaseTranstion}
There exist $c,C >0$ such that for all $\lambda\ge C\max\{1,\varepsilon\}$ and all $\Lambda\Subset \Z^d$
    \begin{align*}
        Q^{+}_{\Lambda,\varepsilon, \lambda}(\sigma_o=1)\ge 1-\mathrm{e}^{-c\lambda }-\mathrm{e}^{-c \lambda ^2/ \varepsilon^2 }.
    \end{align*}
\end{theorem}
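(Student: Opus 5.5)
We follow the route of Ding--Zhuang~\cite{DiZh24}: a Peierls-type estimate in the joint measure $Q^+_{\Lambda,\varepsilon,\lambda}$ in which the random-field contribution is controlled by a greedy-lattice-animal / chaining bound on Gaussian suprema.

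\textbf{Contours and the bad event.} On $\{\sigma_o\neq 1\}$ under plus boundary conditions, $o$ is separated from $\Lambda^\compl$ by a closed surface. We take $A$ to be the connected component containing $o$ of the set of sites that are not joined to $\Lambda^\compl$ by a path of $+1$ spins. The site-boundary of $A$ then consists only of vacancies or $-1$ spins adjacent to $+1$'s. Since $\omega_x\omega_y=-1$ is forbidden, every $-1$ particle is separated from every $+1$ particle by a layer of zeros. Hence the outer boundary layer $\partial A$ of $A$ contains at least of order $|\partial A|$ vacant sites; concretely, we choose a thickened boundary so that every site of it is empty. This is the contour $\gamma$, and its size satisfies $|\gamma|\ge c|\partial A|$.

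\textbf{Generalized flip.} Given $\omega$ with contour $\gamma$ around $A$, we define $T_A\omega$ by flipping all spins in $A$ (setting $\omega_x\mapsto-\omega_x$), filling the vacant boundary layer $\gamma$ with $+1$ particles, and replacing the field $\zeta\mapsto\zeta^A$ with $\zeta^A_x=-\zeta_x$ on $A$. We must check that $T_A\omega$ is admissible. Inside $A$ the flip preserves the hard core. The filled layer $\gamma$ touches only $+1$'s outside, and inside it touches only sites whose flipped value is $+1$ or $0$; to ensure this, we choose $\gamma$ as the layer adjacent to the region whose original spins are $-1$ or $0$. The weight then gains $\mathrm{e}^{\lambda|\gamma|}$ from the filled vacancies, while the field term changes only by $-\sum_{x\in\gamma}\zeta_x$. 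Because the Gaussian law is symmetric, the map $\eta\mapsto\eta^A$ preserves $\mathbb{P}$. This yields
\[
Q^+(\text{contour }\gamma)\le \E\Big[\mathrm{e}^{-\lambda|\gamma|+\varepsilon\sum_{x\in\gamma}|\eta_x|}\,\frac{Z^{\varepsilon\eta^A}_{\Lambda,+}}{Z^{\varepsilon\eta}_{\Lambda,+}}\Big].
\]

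\textbf{Main obstacle: the ratio of partition functions.} The factor $Z^{\varepsilon\eta^A}/Z^{\varepsilon\eta}$ is the key difficulty, exactly as in the random-field Ising model. Following Ding--Zhuang, we write $\Delta_A(\eta)=\log Z^{\varepsilon\eta^A}-\log Z^{\varepsilon\eta}$. By Gaussian concentration (the map is Lipschitz in $\eta_A$ with constant $\varepsilon\sqrt{|A|}$), each $\Delta_A$ is sub-Gaussian. The required bound is $\max_A\Delta_A\le \tfrac{\lambda}{2}|\partial A|$ for all contours surrounding $o$ of the relevant size. This is established by a multiscale coarse-graining/chaining argument over $A$, using the isoperimetric count of contours of size $n$, which is at most $\mathrm{e}^{Cn}$, together with the increments $\Delta_A-\Delta_{A'}$ having variance $\lesssim\varepsilon^2|A\triangle A'|$. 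This is precisely where $d\ge3$ enters, since the entropy of coarse-grained sets at scale $2^k$ beats the Gaussian fluctuations only in $d\ge3$. The outcome is that the bad event on which the bound fails has $\mathbb{P}$-probability at most $\mathrm{e}^{-c\lambda^2/\varepsilon^2}$.

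\textbf{Summation.} On the good event, the probability of a contour of size $n$ is at most $\mathrm{e}^{-\lambda n/2}\E[\mathrm{e}^{\varepsilon\sum|\eta_x|}]\le\mathrm{e}^{-\lambda n/2+C\varepsilon n+C\varepsilon^2 n}$. There are at most $\mathrm{e}^{Cn}$ contours of size $n$ surrounding $o$. Since $\lambda\ge C\max\{1,\varepsilon\}$, the sum over $n\ge1$ is at most $\mathrm{e}^{-c\lambda}$. Adding the probability of the bad event gives the theorem. We must also ensure that the flip is injective up to a factor of $\mathrm{e}^{C|\gamma|}$, which is absorbed into the same counting.
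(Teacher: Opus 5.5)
\textbf{The contour and flip fail as you define them.} Your overall architecture matches the paper: a Peierls bound in the joint measure, with a good event controlling the partition-function ratio by Lipschitz concentration and Fisher--Fr\"ohlich--Spencer coarse-graining, where $d\ge 3$ enters. The genuinely new step for the WRM, however, is a contour on which the generalized flip preserves the hard core, and that step is where your proposal breaks.

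Your $A$ is the component of $o$ in the complement of the $+$-cluster of $\Lambda^\compl$. It can contain $+1$ sites adjacent to its inner boundary. For example, in $\Z^2$ take $\omega_o=-1$, zeros on $\{\|x\|_\infty=1\}$, $+1$ on $R=\{\|x\|_\infty=2\}$, zeros on $Z=\{\|x\|_\infty=3\}$, and $+1$ beyond. This configuration is admissible. Then $R\cup Z\subset A$ and $\partial^1 A=Z$, so after your flip $Z$ is $+1$ while the adjacent ring $R$ is $-1$. The same failure occurs for a single $+1$ island at $\ell^1$-distance $2$ from the outer $+$-cluster.

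Your remedy, to ``choose $\gamma$ as the layer adjacent to the region whose original spins are $-1$ or $0$'', is not a definition. A thickened boundary consisting only of vacancies does not exist in general; only the layer touching the outer $+1$'s is forced to be empty.

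The missing idea is the paper's construction. Let $B_\omega$ be the maximal $1$-connected set of non-$(+1)$ sites containing $o$, so all its outer neighbours are $+1$. Let $A_\omega=\bar B_\omega$ be its $2$-closure. Then:
\begin{itemize}
\item $\partial^1 A_\omega\subset B_\omega$, and by the hard core it consists of zeros.
\item Any $y\in A_\omega$ adjacent to $\partial^1 A_\omega$ lies in $B_\omega$. Otherwise $y$ would be within $\ell^1$-distance $2$ of a site of $A_\omega^\compl$ that escapes to infinity along a $2$-connected path avoiding $B_\omega$, so $y$ would not be $2$-enclosed. Hence the flipped values next to the new $+1$'s lie in $\{0,1\}$. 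With the $1$-closure this fails.
\item $A_\omega\in\mathfrak{A}_2$. This is what makes the Peierls count $|\{A\in\mathfrak{A}_2\colon |\partial^2 A|=n\}|\le C^n$ (via $d$-connectedness of $\partial^2 A$) and the coarse-graining applicable.
\end{itemize}

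\textbf{Your summation does not cover the stated parameter range.} You bound the boundary field term by $\E[\mathrm{e}^{\varepsilon\sum_{x\in\gamma}|\eta_x|}]$, which costs $\mathrm{e}^{C(\varepsilon+\varepsilon^2)n}$ for a contour of size $n$. The factor $\mathrm{e}^{-\lambda n/2}$ beats this only if $\lambda\gtrsim\varepsilon^2$. For large $\varepsilon$ that is not implied by $\lambda\ge C\max\{1,\varepsilon\}$ with $C$ independent of $\varepsilon$.

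The paper never takes this exponential moment. It adds to the good event the requirement $\sum_{x\in\partial^1 A}\varepsilon\eta_x\le \frac{\lambda}{4}|\partial^1 A|$ for all $A\in\mathfrak{A}_2$. By a union bound with Gaussian tails and the $C^n$ count, the complement of this event has probability at most $\mathrm{e}^{-c\lambda^2/\varepsilon^2}$ once $\lambda/\varepsilon$ is large. On the good event $E^\star$, the paper uses that the flip is a bijection for fixed $A$ to get $\int_{E^\star}\sum_{\omega\colon A_\omega=A} f^+_\Lambda(\omega^A,\zeta^A)\,\diff\zeta\le 1$. Together with the bad event for the partition-function ratio, this is exactly where the term $\mathrm{e}^{-c\lambda^2/\varepsilon^2}$ in the statement comes from.
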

This readily implies our first main result on non-uniqueness. 
\begin{proof}[Proof~of~Theorem~\ref{Thm:Nonuniqueness}]
By Proposition~\ref{Prop:Existence} and Theorem~\ref{Thm:3dPhaseTranstion} for sufficiently large $\lambda$, 
\begin{align*}
    \E\left[\mu_{+}^{\eta,\lambda}(\sigma_o=1)\right]>1/2>\E\left[\mu_{-}^{\eta,\lambda}(\sigma_o=1)\right],
\end{align*}
by symmetry. Therefore, with positive probability w.r.t.\ $\eta$ it holds that $\mu_{+}^{\eta,\lambda}(\sigma_o=1)\neq \mu_{-}^{\eta,\lambda}(\sigma_o=1)$ and hence $
|\mathcal{G}(\varepsilon\eta,\lambda)|>1 
$ with positive probability. Since the event  $\{\zeta\colon 
|\mathcal{G}(\varepsilon\zeta,\lambda)|>1 \}
$ is translation invariant and since $\eta$ is ergodic, we even have that $\P$-almost surely $
|\mathcal{G}(\varepsilon\eta,\lambda)|>1 
$.
\end{proof}
The remainder of this section is devoted to proving Theorem~\ref{Thm:3dPhaseTranstion}. In doing so, we follow the proof strategy for non-uniqueness of the random-field Ising model (RFIM) presented in \cite{DiZh24}. In the Ising model there are only two spin values $\sigma_o=1$ or $\sigma_o=-1$ and a boundary of surface area $n$, also called contour, between two regions with opposite spin values causes an energetic cost of order $n$. When searching for the desired upper bound on $Q^{+}_{\Lambda,\varepsilon, \lambda}(\sigma_o\neq 1)$, \cite{DiZh24} utilizes this energetic cost of order $n$ by finding the smallest contour surrounding the origin $\sigma_o=-1$ and then flipping all the signs of $\sigma$ and $\eta$ that are in its interior. The joint sign flip of $\sigma$ and $\eta$ causes an energetic gain of order $n$, which dominates the influence of the random field $\eta$ with large probability w.r.t.\ the disorder.  
In this way the same type of long-range order as in the Ising model without random field is maintained. A key contribution of our work lies in finding a suitable analogue of the notion of a boundary of the region where $\sigma_o\neq 1 $ around the origin, as well as an associated joint spin flip so that there is an useful energetic gain also in the RFWRM. More precisely, we will see that in our case the flip operation 
does not only interchange minuses and pluses, but also converts zeros at the boundary into pluses. In order to avoid violation of the hard-core constraint after application of the flip operation, we work with thickened boundaries. To make this rigorous, we introduce the following notions of boundary and connectedness with respect to $m\in\N $.  
\begin{enumerate}[label=(\roman*)]
    \item For $x,y\in \Z^d$, we say $x\overset{m}{\sim} y$ if $\|x-y\|_1\le m$.
    \item  A {\em path} $\pi\colon \Z_{\ge 0}\rightarrow \Z^d$ is {\em $m$-connected} if  $\pi(t)\overset{m}{\sim}\pi(t+1)$ for all $t$.
    \item A set $A\subset \Z^d$ is {\em $m$-connected} if there is, for every $x,y\in A$, an $m$-connected path $\pi$ with values in $A$ and such that $\pi(0)=x$ and $\pi(t)=y$ for some $t\in \Z_{\ge 0}$.
    \item A set $A\subset \Z^d$ is {\em $m$-simply connected} if both $A$ and $A^\compl$ are $m$-connected. 
    \item We say that $x\in \Z^d$ is {\em $m$-enclosed} by some set $A\subset \Z^d$ if for every $m$-connected and self-avoiding path $\pi\colon \Z_{\ge 0}\rightarrow \Z^d$ with $\pi(0)=x$ there is $t\in \Z_{\ge 0}$ with $\pi(t)\in A$.
    \item The (inner) {\em $m$-boundary} of $A$ is defined as 
    \begin{align*}
        \partial^m A= \{x \in A\setslash\exists y\in A^\compl \colon x\overset{m}{\sim}y\}.
    \end{align*}
    \item Let $\mathfrak{A}_m$ be the space of all $m$-simply connected and finite subsets of $\Z^d$ containing the origin.
\end{enumerate} 
Next, we want to define for every $\omega \in \Omega$ an $A_\omega\in \mathfrak{A}_2$ whose boundary $\partial ^1 A_\omega$ can be interpreted as the contour surrounding the origin. If $\omega_o=1$, set $A_\omega =\emptyset$. If $\omega_o\neq 1$ set $B_\omega$ as the maximal $1$-connected set containing the origin with $\omega_x\neq 1$ for all $x\in B_\omega$. For every $A\subset \Z^d$ we define the $2$-closure of $A$ as
\begin{align*}
    \bar A=\{x\in \Z^d\setslash x \text{ is $2$-enclosed by }A \}.
\end{align*} Note that $A\subset \bar A$. Still in the case $\omega_o\neq 1$, define $A_\omega=\bar B_\omega$. For a visualization of $A_\omega$, see the left picture of Figure~\ref{fig:ConfigurationFlipped}. Here, $B_\omega$ contains the blue and white vertices surrounding the origin and $A_\omega$, indicated by the black lines, includes also the red vertices that are surrounded by $B_\omega$ in the $2$-enclosed sense. Note that two of the red vertices are not included in $A_\omega$ as they are only $1$-enclosed by $B_\omega$ and not $2$-enclosed. 

The following lemma hints as to why we define $A_\omega$ in this fashion. More precisely, the first reason why it is useful is that on $\partial^1 A_\omega$ there are only zeros
when restricting to local feasible configurations with extremal boundary, i.e., 
\begin{align*}
    \Omega^+ _\Lambda=\{\omega\in \Omega\setslash \omega_{\Lambda^\compl}=(\mathbf{+1})_{\Lambda^\compl},\    H_\Lambda^{\mathbf{0},\lambda}(\omega)<\infty \},\qquad o\in \Lambda\Subset\Z^d.
\end{align*}
Second, we show a control on the number of sets $A$ that can be obtained as $A=A_\omega$. 
\begin{lemma}[Contour properties]\label{Lemma:CountingSimpleSets}
We have the following contour properties for $d\ge 2$ and finite $\Lambda\in  \mathfrak{A}_2$. 
\begin{enumerate}[label=(\roman*)]
    \item For every $\omega \in \Omega^+_\Lambda$ with $\omega_o\neq 1$, it holds that $A_\omega\in \mathfrak{A}_2$ and $\omega_{\partial ^1A_\omega} =\mathbf{0}$. 
    \item There exists $C>0$ such that for all $n\in \N$
    \begin{align*}
    |\{A\in \mathfrak{A}_2\setslash |  \partial ^2 A|=n \}|\le C^{n}.
    \end{align*}
\end{enumerate}
\end{lemma}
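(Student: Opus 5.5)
For part (i), I would first check that $B_\omega$ is finite and contained in $\Lambda$. Since $\omega_{\Lambda^\compl}=\mathbf{+1}$, every site of $B_\omega$ lies in $\Lambda$, so $B_\omega$ is finite and $1$-connected. The set $A_\omega=\bar B_\omega$ is then finite as well. Indeed, $\Lambda^\compl$ is infinite and $2$-connected, so it contains arbitrarily long self-avoiding $2$-connected paths. Any site far outside $\Lambda$ can escape to infinity without meeting $B_\omega$, hence is not $2$-enclosed.

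Next I would show that $A_\omega$ is $2$-connected. $B_\omega$ is $1$-connected, and every point of $\bar B_\omega\setminus B_\omega$ reaches $B_\omega$ along a $1$-step (hence $2$-connected) path inside $\bar B_\omega$. To see this, walk from $x$ in a fixed direction. The walk is self-avoiding and must hit $B_\omega$. Every intermediate point is itself $2$-enclosed, because any escaping path from it could be prefixed by the segment from $x$ (after loop-erasure), contradicting enclosure of $x$.

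For $2$-connectedness of $A_\omega^\compl$, I would argue that every $y\notin \bar B_\omega$ admits a self-avoiding $2$-connected path to infinity avoiding $B_\omega$. Every point on such a path is also not enclosed, so the path stays in $A_\omega^\compl$. Two such escape paths can then be joined far away, where everything lies in the complement.

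For the boundary values, take $x\in\partial^1A_\omega$ with a neighbour $y\notin A_\omega$. If $x\in B_\omega$, then $\omega_x\in\{0,-1\}$. I would show $\omega_y=1$ as follows: if $\omega_y\neq 1$, then $y\in B_\omega$ by maximality of the $1$-connected cluster, which is impossible. Hard-core then forces $\omega_x\neq -1$, so $\omega_x=0$.

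If $x\in\bar B_\omega\setminus B_\omega$, the claim needs more care, and I expect this to be the delicate point. We cannot have $\omega_x\neq 1$ with $x$ adjacent to $B_\omega$, since then $x$ would belong to $B_\omega$. I would aim to show that a site $x\in\partial^1 A_\omega$ which is $2$-enclosed by $B_\omega$ but not in $B_\omega$ cannot occur. Such an $x$ has a neighbour $y$ with an escaping path $\pi$. Prepending $x$ to $\pi$ (loop-erasing if needed) gives an escaping path from $x$, unless $\pi$ passes through $x$. In the latter case, the portion of $\pi$ after its last visit to $x$ escapes from $x$. Either way we contradict the enclosure of $x$. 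Hence $\partial^1 A_\omega\subset B_\omega$, and combined with the previous paragraph this gives $\omega_{\partial^1 A_\omega}=\mathbf 0$.

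For part (ii), I would follow the standard Peierls counting argument. For $A\in\mathfrak A_2$, the boundary $\partial^2 A$ is $m$-connected for some fixed $m$; this is a discrete Timár / Deuschel–Pisztora-type statement, which uses that both $A$ and $A^\compl$ are $2$-connected. It also lies within distance $O(n)$ of the origin, since the origin is in $A$ and $A$ is finite. The number of $m$-connected sets of size $n$ containing a given point is at most $C_1^n$, by counting lattice animals in the graph with $\overset{m}{\sim}$ edges. There are $O(n^d)$ choices of an anchor point on the axis from $o$. Finally, $A$ is determined by $\partial^2A$, because $A$ is the union of $\partial^2A$ with the finite components of its complement, equivalently the sites enclosed by it. Combining these gives $C^n$ after absorbing the polynomial factor. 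The main obstacle is the connectedness of $\partial^2A$; I would import the known result that the $\ell^\infty$-outer boundary of a set with connected complement is $*$-connected, and adapt constants to the $\ell^1$-radius-$2$ adjacency.
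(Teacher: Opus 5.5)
Your proposal is correct and follows the paper's route. Part (i) uses the same enclosure and maximality arguments, and you actually justify $\partial^1 A_\omega\subset B_\omega$, which the paper only asserts. Part (ii) combines connectedness of $\partial^2 A$ via Timár's theorem, lattice-animal counting from an anchor near the origin, and recovery of $A$ from $\partial^2 A$, as the paper does.

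Two justifications need tightening:
\begin{enumerate}[label=(\alph*)]
\item The anchor's distance $O(n)$ from $o$ does not follow from $o\in A$ and finiteness of $A$. It follows from the connectedness of $\partial^2 A$: this set meets the $e_1$-axis on both sides of $o$, so the anchor's $\ell^1$-norm is at most the diameter of $\partial^2 A$, which is $O(n)$. Alternatively, use the paper's column-counting argument.
\item The connectedness of $\partial^2 A$ is not a change of constants in the $\ast$-connectivity result for nearest neighbours. It requires Timár's general theorem applied to the $2$-adjacency graph. The paper verifies its hypothesis by generating all $2$-cycles from triangles whose vertices are pairwise $d$-adjacent.
\end{enumerate}
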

This lemma can be proved by connectivity and counting arguments. For the reader's convenience, a detailed proof is included in Appendix~\ref{Sec:Appendix_Boundary}. The third reason why $A_\omega$ is useful becomes apparent when defining a generalized spin flip of $\omega \in \Omega$ and $\zeta\in \W$ on $A\Subset \Z^d$, i.e., 
\begin{align*}
    \omega ^A_x=\begin{cases}1  &x\in \partial^1 A\\
    -\omega _x &x\in A\setminus \partial^1 A \\
        \omega_x &x \in A ^{\compl }
        \end{cases}    
         \qquad \text{and}\qquad \zeta ^A_x=\begin{cases}
        -\zeta_x &x\in A\\
        \zeta_x &x \in A ^\compl.
    \end{cases}
\end{align*}
For a visualization of $\omega^{A_\omega}$, see Figure~\ref{fig:ConfigurationFlipped}.
\begin{figure}
    \centering
    \includegraphics[width=\linewidth]{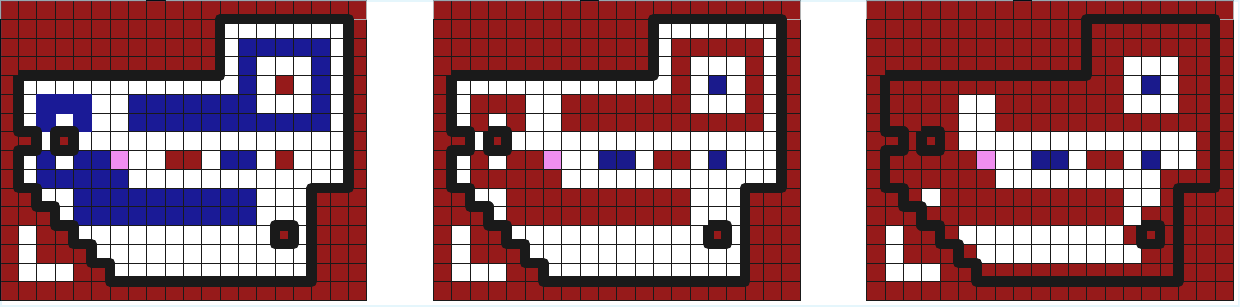}
    \caption{A visualization of a WRM configuration, where blue represents $-1$, red $1$, white $0$ and violet the origin.  In all three pictures, the black lines surround $A_\omega$. The left picture represents $\omega$ which is $-1$ in the origin. In the middle picture we flip $\omega_x$ to $-\omega_x$ for $x\in A_\omega\setminus \partial^1 A_\omega$. On the right is the result of the generalized flip $\omega^{A_\omega}$, i.e., where also $\omega_x=1$ for $x\in \partial ^1 A_\omega$. While flipping the inner boundary to $1$ will ensure an energetic gain of boundary order, the choice of $A_\omega$ and the spin flip inside of $A_\omega$ is made to satisfy the hard-core condition of the WRM. Note that the isolated red vertex on the left side of the pictures is not flipped as it is not $2$-enclosed by $B_\omega$.}
    \label{fig:ConfigurationFlipped}
\end{figure}
Write $f^+_\Lambda\colon \{-1,0,1\}^{\Lambda}\times \R^{\Lambda}$ as the density of $Q^+_{\Lambda,\varepsilon, \lambda}$ w.r.t.\ the natural choice of measure on $\{-1,0,1\}^{\Lambda}\times \R^{\Lambda}$, i.e., 
\begin{align*}
    f^+_\Lambda(\omega, \zeta)=\frac{{\rm e}^{-H^{\varepsilon\zeta, \lambda} _\Lambda(\omega _\Lambda (\mathbf{+1})_{\Lambda^{\compl}})}}{Z^{\varepsilon\zeta} _{\Lambda, +}}\prod_{x\in \Lambda}\frac{1}{\sqrt{2\pi }}{\rm e}^{-\zeta_x^2/2},
\end{align*}
where $Z^{\varepsilon\zeta} _{\Lambda, +}=Z^{\varepsilon\zeta} _{\Lambda,\mathbf{+1}}$.
The generalized spin-flip operation $(\omega,\zeta)\mapsto (\omega ^{A_\omega},\zeta^{A_\omega})$ guaranties the following when restricting to $\Omega^+_\Lambda$.
\begin{lemma}[Spin-flip properties]\label{Lemma:DensityQuotient}
Let $o\in\Lambda\Subset\Z^d$, $\omega \in \Omega^+_\Lambda$ with $\omega_o\neq 1$, and $\zeta\in \R^{\Lambda}$. Then, 
\begin{enumerate}[label=(\roman*)]
    \item the flipped configuration obeys the hard-core constraint $\omega ^{A_\omega }\in \Omega^+_\Lambda$ and
    \item the quotient of the densities is given by
    \begin{align*}
    \frac{f^+_\Lambda(\omega, \zeta)}{f^+_\Lambda(\omega^{A_\omega}, \zeta^{A_\omega})}=\exp\Big(\sum_{x\in  \partial^1 A_\omega }(-\lambda +\varepsilon \zeta_x) 
    \Big)Z^{\varepsilon \zeta^{A_\omega }} _{\Lambda, + }/Z^{\varepsilon\zeta} _{\Lambda, + }.
    \end{align*}
\end{enumerate}
\end{lemma}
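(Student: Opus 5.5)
We rely on Lemma~\ref{Lemma:CountingSimpleSets}(i): $A:=A_\omega\in\mathfrak{A}_2$ and $\omega_{\partial^1A}=\mathbf{0}$. We also need $A\subset\Lambda$. Since $\omega\in\Omega^+_\Lambda$ equals $+1$ off $\Lambda$, the set $B_\omega$ lies in $\Lambda$. Because $\Lambda\in\mathfrak A_2$ has a $2$-connected complement, which is infinite, every $x\notin\Lambda$ can escape along a $2$-connected self-avoiding path in $\Lambda^\compl$. Such an $x$ is therefore not $2$-enclosed by $B_\omega$, and so $A\subset\Lambda$.

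For part (i) we check that no two neighbours carry opposite signs in $\omega^A$. We split nearest-neighbour pairs $x\sim y$ by location.
\begin{itemize}
\item Both $x,y\in A\setminus\partial^1A$: the product $\omega_x\omega_y$ is unchanged by the global sign flip, so it is not $-1$.
\item At least one endpoint in $\partial^1A$, the other in $A$: the boundary site becomes $+1$. An interior neighbour $y\in A\setminus\partial^1A$ must not become $-1$, i.e.\ we need $\omega_y\neq 1$. This is the key point, and the interior case is where I expect the real work. We claim every $y\in A\setminus\partial^1A$ with $\omega_y=1$ is impossible next to the boundary. Suppose $\omega_y=1$ and $y$ neighbours $x\in\partial^1A$. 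Then $y\notin B_\omega$, so $y$ lies in $\bar B_\omega\setminus B_\omega$. I would argue that the $1$-connected $+1$-cluster of $y$, together with its zero-free surroundings, sits in a hole of $B_\omega$. That hole is separated from $A^\compl$ by $B_\omega$-sites that are at $\ell^1$-distance at least $2$ from $A^\compl$ only if they are not in $\partial^1A$. Concretely, since $x\in\partial^1A$ has a neighbour $z\notin A$, one has $\|y-z\|_1\le2$. Using the fact that $z$ is not $2$-enclosed, we obtain a self-avoiding $2$-path from $y$ through $z$ to infinity avoiding $B_\omega$. Hence $y$ is not $2$-enclosed, a contradiction. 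The care needed is in making this path self-avoiding and genuinely avoiding $B_\omega$, possibly by concatenating and loop-erasing.
\item Pairs with $x\in\partial^1A$, $y\notin A$: the value $\omega_y$ is unchanged, and we need $\omega_y\neq-1$. The site $y$ is $1$-adjacent to $A$. If $\omega_y=-1$ then $y$ would be $\neq1$ and adjacent to $x$, where $\omega_x=0\ne1$ and $x\in B_\omega$, or would be enclosed. I would show that every $1$-neighbour of $A$ outside $A$ has $\omega_y=1$ by maximality of $B_\omega$. Indeed, the outer neighbours of $A$ are neighbours of $B_\omega$, since holes are interior, and they are not in $B_\omega$, hence equal $+1$.
\item Pairs with $x\in A\setminus\partial^1A$: such $x$ has no neighbour outside $A$, so this case is vacuous.
\end{itemize}
Finally, no constraint outside $\Lambda$ changes, so $\omega^A\in\Omega^+_\Lambda$.

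Part (ii) is a direct computation, since both hard-core terms vanish. The Gaussian weights agree because $\zeta^A_x=\pm\zeta_x$. For the Hamiltonians we compare site by site. On $A^\compl$ nothing changes. On $A\setminus\partial^1A$, $(\omega^A_x)^2=\omega_x^2$ and $\zeta^A_x\omega^A_x=\zeta_x\omega_x$. On $\partial^1A$ we have $\omega_x=0$ and $\omega^A_x=1$, $\zeta^A_x=-\zeta_x$. This gives
\[
H^{\varepsilon\zeta^A,\lambda}_\Lambda(\omega^A)-H^{\varepsilon\zeta,\lambda}_\Lambda(\omega)=\sum_{x\in\partial^1A}(-\lambda+\varepsilon\zeta_x),
\]
and exponentiating yields the stated quotient together with the ratio of partition functions.
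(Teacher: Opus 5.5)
Your proposal is correct and follows essentially the same route as the paper. It uses the same case split over neighbouring pairs, the same key escape argument for (i), maximality of $B_\omega$ for outer neighbours, and the same site-by-site comparison of the Hamiltonians for (ii). The escape argument is that a site of $A_\omega\setminus B_\omega$ cannot be adjacent to $\partial^1A_\omega$, since one could escape through the outside neighbour of that boundary site, at $\ell^1$-distance at most $2$, contradicting $2$-enclosure.

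Your explicit check that $A_\omega\subset\Lambda$, using $\Lambda\in\mathfrak{A}_2$ as in Lemma~\ref{Lemma:CountingSimpleSets}, fills a step the paper passes over when it asserts $\omega^{A_\omega}_{\Lambda^\compl}=(\mathbf{+1})_{\Lambda^\compl}$ ``by definition''. For $\Lambda$ with $2$-holes this can genuinely fail. Two small cleanups: list the trivial case $x,y\in A_\omega^\compl$, and justify $\partial^1A_\omega\subset B_\omega$ by your escape argument rather than by ``holes are interior''.
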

\begin{proof}
For Item~$(i)$, by definition of $\omega^{A_\omega}$, it holds that $\omega^{A_\omega}_{\Lambda^\compl}=(\mathbf{+1})_{\Lambda^\compl}$. Since $H_\Lambda^{\mathbf{0},\lambda}(\omega)<\infty$, we know that the hard-core condition is satisfied, i.e., for all $x,y\in \Z^d$ 
\begin{align*}
     x\overset{1}{\sim} y\implies   \omega_x\omega_y\neq -1.
\end{align*}
Next, we need to show that this implies that also $\omega^{A_\omega}$ does not violate the hard-core condition, i.e., that for all $x,y\in \Z^d$,
\begin{align}\label{Eq:HardcoreImplication}
    x\overset{1}{\sim} y\implies  \omega_x^{A_\omega}\omega^{A_\omega}_y\neq -1.
\end{align}
We prove \eqref{Eq:HardcoreImplication} by checking the complete list of cases. 
\begin{enumerate}[label=(\roman*)]
    \item If $x,y\in A_\omega ^\compl$ or $x,y\in A_\omega\setminus \partial ^1 A_\omega$ with $ x\,\smash{\overset{1}{\sim}}\,  y$, Condition~\eqref{Eq:HardcoreImplication} follows since 
$
\omega_x^{A_\omega}\omega^{A_\omega}_y=\omega_x\omega_y\neq -1.
$

    \item If $x\in A_\omega ^\compl $ and $y\in A_\omega \setminus \partial^1 A_\omega$, Condition~\eqref{Eq:HardcoreImplication} is satisfied since $x\, \smash{\overset{1}{\sim}}\,  y$ never occurs by the definition of the boundary $\partial^1 A_\omega$.
    \item Also for $x\in \partial^1 A_\omega  $ and $y\in A_\omega\setminus B_\omega $ the condition $x\, \smash{\overset{1}{\sim}}\,  y$ never happens. At this point, it is crucial that we used the $2$-closure of $B_\omega$ and not simply the $1$-closure. Due to this fact, one can construct a path from $y$ over $x$ in the $1$-boundary of $A_\omega$ to $\smash{A_\omega ^\compl}$, which in turn contradicts the fact that $y$ is $2$-enclosed by $B_\omega$. More precisely, since $x\in \partial^1 A_\omega$, there exists an element $\tilde x\in \smash{A_\omega^\compl}$ with $\tilde x\, \smash{\overset{1}{\sim}}\,  x$. By definition $\tilde x$ is not $2$-enclosed by $B_\omega$, which means that there is an infinite, self-avoiding, and $2$-connected path starting in $\tilde x$ and staying in $B_\omega^\compl$. The assumption $x\, \smash{\overset{1}{\sim}}\,  y$ together with $\tilde x\, \smash{\overset{1}{\sim}}\,  x$ leads to $y\, \smash{\overset{2}{\sim}}\,  \tilde x$ and we can construct also an infinite, self-avoiding, and $2$-connected path starting in $y$ and staying in $B_\omega^\compl$. But this is a contradiction to $y\in A_\omega\setminus B_\omega $, since this means that $y$ is $2$-enclosed by $B_\omega$. 
    
    Let us remark that this case explains why we needed to exclude the isolated red vertex on the left side of the pictures in Figure~\ref{fig:ConfigurationFlipped} from $A_\omega$ by choosing $A_\omega$ as the $2$-enclosure of $B_\omega$ and not the $1$-enclosure.
    \item If $x\in \partial^1 A_\omega  $ and $y\in B_\omega $, it holds that $\omega ^{A_\omega }_x=1$ and $\omega^{A_\omega}_y=-\omega_y\neq -1$. Overall, $\omega ^{A_\omega }_x\omega^{A_\omega}_y\neq -1$ and Condition~\eqref{Eq:HardcoreImplication} is satisfied.
    \item If $x\in \partial^1  A_\omega $ and $y\in A_\omega ^\compl $ with $x\, \smash{\overset{1}{\sim}}\,  y$, we have $\omega_x^{A_\omega}= 1$ and by maximality of $B_\omega$ also $\omega^{A_\omega}_y=\omega_y=1$.
\end{enumerate}
    Hence, Condition~\eqref{Eq:HardcoreImplication} holds for all $x,y\in \Z^d$ and thus, $H_\Lambda^{\mathbf{0},\lambda}(\omega^{A_\omega})<\infty$ for all $\omega\in \Omega^+_\Lambda$. For Item~$(ii)$, we write
    \begin{align*}
    \frac{f^+_\Lambda(\omega, \zeta)}{f^+_\Lambda(\omega^{A_\omega}, \zeta^{A _\omega})}&=\exp\left(-H^{\varepsilon\zeta, \lambda} _\Lambda(\omega _\Lambda (\mathbf{+1})_{\Lambda^{\compl}})+H^{\varepsilon\zeta ^{A_\omega},\lambda} _\Lambda(\omega^{A_\omega} _\Lambda (\mathbf{+1})_{\Lambda^{\compl}})\right)Z^{\varepsilon \zeta^{A_\omega }} _{\Lambda, + }/Z^{\varepsilon\zeta} _{\Lambda, + }\\
    &=\exp\Big(\lambda \sum_{x\in\Lambda}\omega^2_x
    + \sum_{x\in\Lambda}\varepsilon \zeta_x \omega_x- \lambda \sum_{x\in\Lambda}\Big(\omega_x^{A_\omega }\Big)^2
    -\sum_{x\in\Lambda}\varepsilon \zeta_x^{A_\omega} \omega_x^{A_\omega}\Big)Z^{\varepsilon \zeta^{A_\omega }} _{\Lambda, + }/Z^{\varepsilon\zeta} _{\Lambda, + }.
\end{align*}
By Lemma~\ref{Lemma:CountingSimpleSets} we know that $\omega_{\partial^1 A_\omega}=\mathbf{0}$ as well as $\omega_{\partial^1 A_\omega}^{A_\omega}=\mathbf{1}$ and hence
\begin{align*}
    \frac{f^+_\Lambda(\omega, \zeta)}{f^+_\Lambda(\omega^{A_\omega}, \zeta^{A _\omega})}&=\exp\Big( \sum_{x\in  \partial ^1 A_\omega }(-\lambda + \varepsilon \zeta_x) 
\Big)Z^{\varepsilon \zeta^{A_\omega }} _{\Lambda, + }/Z^{\varepsilon\zeta} _{\Lambda, + },
\end{align*}
as desired. 
\end{proof}
To summarize, when using the joint flip $(\omega,\zeta)\mapsto (\omega^{A_\omega},\zeta^{A_\omega})$, we get an energetic gain of $\lambda |\partial^1 A_\omega|$, which is competing with the field terms coming from $\zeta$. Let us come back to the arguments of \cite{DiZh24} to see how to utilize this energetic gain in order to find the desired upper bound on $Q_{\Lambda,\varepsilon,\lambda}^+(\sigma_o\neq 1)$ as stated in Theorem~\ref{Thm:3dPhaseTranstion}. For all $E^\star \subset \W$ measurable, we can write by Lemmas~\ref{Lemma:CountingSimpleSets}~and~\ref{Lemma:DensityQuotient},
\begin{align*}
    Q_{\Lambda,\varepsilon,\lambda}^+(\sigma_o\neq 1)&\le Q_{\Lambda,\varepsilon,\lambda}^+(\sigma_o\neq 1,\eta \in E^\star)+Q_{\Lambda,\varepsilon,\lambda}^+(\eta \notin E^\star)\\
    &\le\sum_{A\in \mathfrak{A}_2} \int _{E^\star} \sum_{\omega\in \Omega^+_\Lambda\colon A_\omega= A}f^{+}_\Lambda (\omega , \zeta)\diff \zeta +\P(\eta \notin E^\star) \\
    &=\sum_{A\in \mathfrak{A}_2} \int _{E^\star} \sum_{\omega\in \Omega_\Lambda^+ \colon A_\omega= A}f^+_\Lambda(\omega^{A}, \zeta^{A})\exp\Big( \sum_{x\in  \partial ^1 A}(-\lambda + \varepsilon \zeta_x) 
\Big)Z^{\varepsilon\zeta ^{A}} _{\Lambda, + }/Z^{\varepsilon\zeta} _{\Lambda, + }\diff \zeta+\P(\eta \notin E^\star).
\end{align*}
Both terms $Z^{\varepsilon\zeta^{A}}  _{\Lambda, + }/Z^{\varepsilon\zeta} _{\Lambda, + }$ and $\exp\big(\sum_{x\in \partial^1 A}\varepsilon\zeta_x\big)$ can be arbitrarily large when not controlled via $\zeta\in E^\star$. For this, we define $E^\star =E_{\Lambda,\lambda/4,\varepsilon}\cap \tilde E_{\lambda/4,\varepsilon }$, where for all $z$ and $\varepsilon$
\begin{align*}
     E_{\Lambda, z,\varepsilon}&\coloneq\big\{\zeta\in \mathcal{H} \setslash\forall A\in \mathfrak{A}_2\colon Z^{\varepsilon \zeta ^A} _{\Lambda, + }/ Z^{\varepsilon \zeta } _{\Lambda, + }\le \exp(z|\partial^1 A|)\big\}\quad \text{ and }\\
    \tilde E_{z,\varepsilon}&\coloneq\Big\{\zeta\in \mathcal{H} \setslash\forall A\in \mathfrak{A}_2\colon \sum_{x\in \partial^1 A}\varepsilon\zeta_x\le z |\partial^1 A| \Big\}.
\end{align*}
Then, 
\begin{align*}
     Q_{\Lambda,\varepsilon,\lambda}^+(\sigma_o\neq 1)\le\sum_{A\in \mathfrak{A}_2} \exp\Big(- \lambda| \partial ^1 A|/2 
\Big)\int _{E^\star} \sum_{\omega\in \Omega_\Lambda^+ \colon A_\omega= A}f^+_\Lambda(\omega^{A}, \zeta^{A})\diff \zeta+ \P(\eta \notin E^\star).
\end{align*}
Define for $\omega\in \Omega_{\Lambda}^+$ the inverse flip by $\iota_A( \omega)  _x=0$ for $x\in \partial^1 A$, $\iota_A(\omega)_x=-\omega_x$ for $x\in A\setminus \partial ^1A$ and $\iota_A( \omega) _x=\omega_x$ else. This is indeed the inverse flip in the sense that it is the inverse of 
\begin{align*}
    \{\omega\in \Omega_\Lambda^+\setslash \omega_{\partial ^1 A}=\mathbf{0}\}\rightarrow \{\omega\in \Omega_\Lambda^+\setslash \omega_{\partial ^1 A}=\mathbf{1}\} ,\quad\omega \mapsto \omega ^A .
\end{align*}
Due to this bijectivity, we can write
\begin{align*}
    \int  \sum_{\omega\colon A_\omega= A}f^{+}_\Lambda (\omega ^A , \zeta^A)\diff \zeta=\int  \sum_{\omega\colon A_{\iota_A(\omega )}= A,\,  \omega_{\partial^1 A}=\mathbf{1}}f^{+}_\Lambda (\omega  , \zeta)\diff \zeta  \le\int \sum_{\omega\in \Omega_\Lambda^+}f^{+}_\Lambda (\omega  , \zeta)\diff \zeta=1.  
\end{align*}
By Lemmas~\ref{Lemma:CountingSimpleSets}~and~\ref{Lemma:Equivalence_Boundaries}, there are $c,C>0$ such that
\begin{align*}
     Q_{\Lambda,\varepsilon,\lambda}^+(\sigma_o\neq 1)&\le\sum_{A\in \mathfrak{A}_2} \exp\Big(-  \lambda| \partial ^1 A|/2\Big)+ \P(\eta \notin E^\star)\\
     &\le \sum_{A\in \mathfrak{A}_2} \exp\Big(-  c\lambda| \partial ^2 A|\Big)+ \P(\eta \notin E^\star)\\
     &\le\sum_{n=1}^\infty C^n \exp(-c\lambda n  )+\P(\eta \notin E^\star).
\end{align*}
Therefore, there are $\tilde c,\tilde C>0$ such that for  $\lambda\ge \tilde C$
\begin{align}\label{eq:phasetransitionAlmostthere}
     Q_{\Lambda,\varepsilon,\lambda}^+(\sigma_o\neq 1)\le \mathrm{e}^{-\tilde c\lambda}+\P(\eta \notin E_{\Lambda, \lambda/4,\varepsilon})+\P(\eta \notin \tilde E_{ \lambda/4,\varepsilon}).
\end{align}
Note that in the absence of an external field, i.e., $\varepsilon=0$, the proof steps developed so far imply the existence of the phase transition in the classical WRM, even for $d=2$, as then $\P(\eta \notin E^\star)=0$. In the RFWRM with $\varepsilon>0$ we are finished when bounding the last two terms in~\eqref{eq:phasetransitionAlmostthere} appropriately, where the latter term is the easier one. In particular this will explain why for the phase transition $\lambda/\varepsilon$ has to be large.
\begin{prop}[Gaussian concentration]\label{Prop:Etildeillbehaved}
There are $c,C >0$ such that for all $z/\varepsilon>C$, it holds that
\begin{align*}
    \P(\eta \notin \tilde E_{ z,\varepsilon})\le \mathrm{e}^{-cz^2 /\varepsilon^2}.
\end{align*}

\end{prop}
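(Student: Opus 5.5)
We plan a union bound over contours combined with a Gaussian tail bound for each fixed contour, paid for by the counting estimate of Lemma~\ref{Lemma:CountingSimpleSets}. By definition,
\begin{align*}
\P(\eta\notin \tilde E_{z,\varepsilon})\le \sum_{A\in\mathfrak{A}_2}\P\Big(\sum_{x\in\partial^1A}\eta_x> \tfrac{z}{\varepsilon}|\partial^1A|\Big).
\end{align*}
For fixed $A$, the sum $\sum_{x\in\partial^1A}\eta_x$ is a centered Gaussian with variance $|\partial^1A|$, since the $\eta_x$ are i.i.d.\ standard Gaussians. The standard tail bound gives
\begin{align*}
\P\Big(\sum_{x\in\partial^1A}\eta_x> \tfrac{z}{\varepsilon}|\partial^1A|\Big)\le \exp\big(-z^2|\partial^1A|/(2\varepsilon^2)\big).
\end{align*}

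Next we group the sets $A$ by the size of their $2$-boundary, $n=|\partial^2A|$. To do so we need a comparison $|\partial^1A|\ge c_0|\partial^2A|$ for $A\in\mathfrak{A}_2$ with some $c_0>0$ depending only on $d$. This is the boundary-equivalence statement already invoked above as Lemma~\ref{Lemma:Equivalence_Boundaries}, which we may use. If one wants to see why it holds: every $x\in\partial^2A$ has some $y\in A^\compl$ with $\|x-y\|_1\le 2$. Walking one lattice step at a time from $x$ to $y$, the last point of $A$ on this path lies in $\partial^1A$ and is within distance $2$ of $x$. Hence each point of $\partial^1A$ is charged by at most $|\{u:\|u\|_1\le 2\}|$ points of $\partial^2A$, a constant depending only on $d$.

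Combining these with Lemma~\ref{Lemma:CountingSimpleSets}(ii), we get
\begin{align*}
\P(\eta\notin \tilde E_{z,\varepsilon})\le \sum_{n\ge1} C_1^n\exp\big(-c_0 z^2 n/(2\varepsilon^2)\big).
\end{align*}
Suppose $z/\varepsilon>C$, where $C$ is chosen so that $c_0C^2/4\ge \log C_1+\log 2$. Then each summand is at most $2^{-n}\exp(-c_0z^2n/(4\varepsilon^2))$. The whole sum is therefore bounded by a geometric series, which is at most $\exp(-c_0z^2/(4\varepsilon^2))$ (the $n=1$ term dominates). This yields the claim with $c=c_0/4$.

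There is one formal point to handle. $\mathfrak{A}_2$ is countable, so the union bound is legitimate, but $\partial^1A$ may be empty, for example when $A=\emptyset$. If $\emptyset$ is counted in $\mathfrak{A}_2$, the corresponding constraint is trivially satisfied and such sets contribute nothing. The only step that needs care is the boundary comparison, and it is a purely geometric fact supplied by Lemma~\ref{Lemma:Equivalence_Boundaries}. The rest is routine.
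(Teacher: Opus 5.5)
Your proposal is correct and follows the paper's proof: a union bound over $A\in\mathfrak{A}_2$, the Gaussian tail bound for $\sum_{x\in\partial^1A}\eta_x\sim N(0,|\partial^1A|)$, and the counting estimate of Lemma~\ref{Lemma:CountingSimpleSets}(ii). You are slightly more careful than the paper in two places. First, you state explicitly the comparison $|\partial^1A|\ge c_0|\partial^2A|$ from Lemma~\ref{Lemma:Equivalence_Boundaries}, which the paper uses only implicitly when it passes from counting by $|\partial^2A|$ to an exponent in $|\partial^1A|$. Second, you spell out how the constant $C$ is chosen so that the geometric series is absorbed into $\mathrm{e}^{-cz^2/\varepsilon^2}$.
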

\begin{proof}
For $N$ a standard Gaussian there exists $C'>0$ such that 
\begin{align*}
    \P(\eta \notin \tilde E_{z,\varepsilon} )\le \sum_{A\in \mathfrak{A}_2}\P(N\ge \varepsilon^{-1}z\sqrt{|\partial ^1 A|})\le C'\sum_{A\in \mathfrak{A}_2}\exp(-|\partial ^1 A|z^2/(2\varepsilon^2)).
    \end{align*}

By Lemma~\ref{Lemma:CountingSimpleSets}, there is $\tilde C>0$ such that
\begin{align*}
    \P(\eta \notin \tilde E_{ z,\varepsilon})\le C'\sum_{n=1}^\infty \tilde C^n \exp(-nz^2/(2\varepsilon^2)),
\end{align*}
which proves the claim. 
\end{proof}
To arrive at a bound for $\P(\eta \notin E_{\Lambda, z,\varepsilon})$, we use a rather general coarse-graining argument first derived in \cite{FiFrSp84}. Let $A\ominus B = (A \setminus B )\cup (B \setminus A)$ denote the symmetric difference of two sets.
\begin{prop}[Uniform boundary-order concentration]\label{Prop:CoarseGraining}
    Let $d\ge 3$. For every $c,C>0$, there are $\tilde c, \tilde C>0$ such that if a family of random variables $(F_{A,\varepsilon})_{A\Subset \Z^d,\varepsilon}$ w.r.t.\ $(\Psi, \mathcal{F}, \P)$
    satisfies for all $z,\varepsilon>0$ 
    and $A,B \Subset \Z^d$
    \begin{align}\label{Eq:PropertyF}
         \P(|F_{A,\varepsilon} |\ge z)\le  C\exp\Big(-\frac{cz^2}{\varepsilon^2 |A|}\Big)\quad \text{and}\quad \P(|F_{A,\varepsilon} -F_{B,\varepsilon}|\ge z)\le C\exp\Big(-\frac{cz^2}{\varepsilon^2 |A  \ominus B|}\Big),
    \end{align}
     we have that for all $z,\varepsilon>0$ and $m=1,2$ 
     \begin{align*}
         \P(\exists A\in \mathfrak{A}_m\colon |F_{A,\varepsilon} |\ge z |\partial^1 A|)\le  \tilde C \exp(- \tilde c z^2/\varepsilon^2).
     \end{align*}
\end{prop}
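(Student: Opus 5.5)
This is the Fisher--Fröhlich--Spencer coarse-graining (chaining) argument, as used in \cite{DiZh24}. I would carry it out in four steps.

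\textbf{Reduction.} By scaling, replace $F_{A,\varepsilon}/\varepsilon$ by $F_A$. It then suffices to show that, for $z\ge z_0$,
\[
\P\big(\exists A\in\mathfrak{A}_m\colon |F_A|\ge z|\partial^1A|\big)\le \tilde C \mathrm{e}^{-\tilde cz^2}.
\]
Small $z$ is absorbed into the constant $\tilde C$. Split the union over $A$ by boundary size: $|\partial^1 A|=n$. A union bound over $n$ leaves, for each $n$, the task of controlling $\sup\{|F_A|\colon A\in\mathfrak{A}_m,\,|\partial^1A|=n\}$ with tail bound $\mathrm{e}^{-cz^2 n^{\alpha}}$ for some $\alpha>0$. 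Such bounds are summable in $n$ and dominated by the $n=1$ term, which gives $\mathrm{e}^{-\tilde c z^2}$.

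\textbf{Coarse-graining.} For $k\ge0$, partition $\Z^d$ into cubes of side $2^k$. Let $A_k$ be the union of those cubes $Q$ with $|Q\cap A|\ge |Q|/2$; set $A_0=A$, and note $A_K=\emptyset$ once $2^{Kd}\gtrsim |A|$. The isoperimetric inequality gives $|A|\le C n^{d/(d-1)}$, so $K\approx \log n$. Two standard geometric facts are needed, and proving them is routine but technical:
\begin{enumerate}[label=(\alph*)]
\item $|A_k\ominus A_{k+1}|\le C2^k n$.
\item The number of distinct $A_k$ arising from $A\in\mathfrak{A}_m$ with $|\partial^1A|=n$ is at most $\exp(Ck\,n2^{-k(d-1)})$.
\end{enumerate}
Fact (a) follows from a relative isoperimetric inequality in each cube. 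Fact (b) holds because $A_k$ is determined by its boundary cubes, whose number is at most $Cn2^{-k(d-1)}$ and which form a connected family on the coarse lattice, so the Peierls count of Lemma~\ref{Lemma:CountingSimpleSets} applies.

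\textbf{Chaining.} Telescope
\[
F_A=\sum_{k=0}^{K-1}(F_{A_k}-F_{A_{k+1}})+F_{A_K}.
\]
Pick weights $z_k=z\,n\,c'2^{-\delta k}$ with $\sum_k z_k\le zn$, for a small $\delta>0$. By the second condition in \eqref{Eq:PropertyF} and fact (a),
\[
\P\big(|F_{A_k}-F_{A_{k+1}}|\ge z_k\big)\le C\exp\big(-cz^2n2^{-(1+2\delta)k}\big).
\]
A union bound over all pairs $(A_k,A_{k+1})$, counted via fact (b), multiplies this by $\exp(Ckn2^{-k(d-1)})$. Since $d\ge3$, we have $d-1>1+2\delta$ for small $\delta$. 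Hence the entropy is dominated by the energy term once $z\ge z_0$, and summing over $k$ gives at most $C\mathrm{e}^{-c''z^2 n}$ in total. The final term $F_{A_K}$, where $A_K$ is empty or a bounded number of cubes, is handled by the first condition in \eqref{Eq:PropertyF}, possibly by starting the chain at an intermediate scale. That yields a bound of order $\exp(-cz^2n^2/|A|)\le \exp(-cz^2 n^{(d-2)/(d-1)})$.

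\textbf{Main obstacle.} The hardest step is the entropy bound (b), together with the requirement that the coarse-grained sets stay within a countable, controlled family. The key point is the comparison $2^{-k(d-1)}$ versus $2^{-k}$, which is exactly where $d\ge3$ enters. The case $m=2$ only changes constants in the connectivity counting.
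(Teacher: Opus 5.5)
Your scheme is the paper's: majority-rule coarse graining, telescoping across scales, union bounds over coarse-grained sets, and the comparison of $2^{-k}$ with $k2^{-(d-1)k}$. Your variations are harmless. These are geometric weights $2^{-\delta k}$ instead of $(2k)^{-2}$, a relative isoperimetric inequality in $(k+1)$-cubes for (a) instead of the sliding-cube argument of Lemma~\ref{Lemma:CoarseGrainedBoundary}, and optionally chaining down to $A_K=\emptyset$.

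\textbf{The gap is your justification of (b)}, which you yourself call the crux. The boundary cubes of $A_k$ do \emph{not} form a connected family on the coarse lattice; $A_k$ need not even be connected when $A$ is. Take two large blobs joined by a long tube of width one. For every $k\ge1$ the tube has density below $1/2$ in each cube it meets, so it disappears from $A_k$. Then $\partial A_k$ splits into two components whose coarse distance is comparable to the tube length. Hence Lemma~\ref{Lemma:CountingSimpleSets}, which counts simply connected sets containing the origin, does not apply to $A_k$. Also, a connected Peierls count would give $\exp(Cn2^{-(d-1)k})$ with no factor $k$. That factor is exactly the cost of placing the disconnected pieces, so your stated reason cannot yield your stated bound.

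\textbf{The missing idea is to tie the pieces together through the fine boundary.}
\begin{enumerate}[label=(\roman*)]
\item Split the coarse boundary into its connected components; there are at most $\alpha_{n,k}\le Cn2^{-(d-1)k}$ of them.
\item For each component, choose an anchor $x_j\in\partial^1A$ inside an adjacent admissible/non-admissible pair of cubes of that component.
\item Since $\partial^1A$ is $d$-connected (Lemma~\ref{Lemma:ConnectivityBoundary}), the depth-first ordering of Lemma~\ref{Lemma:permutationBoundary} lets you order the anchors so that $\sum_j\|x_j-x_{j-1}\|_1\le Cn$. This, not connectivity of $\partial A_k$, is where $A\in\mathfrak{A}_m$ enters.
\item By stars and bars, the gap vector costs $\exp(C\alpha_{n,k}\log(n/\alpha_{n,k}))=\exp(Ck\alpha_{n,k})$.
\item By Lemma~\ref{Lemma:CountingConnectedSets}, the component shapes cost $\exp(C\alpha_{n,k})$.
\item The first anchor satisfies $\|x_1\|_1\le n$ and contributes a factor $n^d$, which must be absorbed. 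For $k\le N(n)$ use $\alpha_{n,k}\ge cn^{1/d}$, as in the paper; at the top scales, if you chain to $\emptyset$, use $k\gtrsim\log n$.
\end{enumerate}
This is the content of Lemma~\ref{Lemma:CountingCoarseGrains}.

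\textbf{Two smaller points.}
\begin{itemize}
\item Summing your chain over $k$ does not give $\mathrm{e}^{-c''z^2n}$. The top scale yields only $\mathrm{e}^{-c''z^2n^{\beta}}$ with $\beta=1-(1+2\delta)/d$, or $\beta=1-(1+2\delta)/(d-1)$ if you chain to $\emptyset$. This still suffices.
\item If you stop at $N(n)$ and bound $F_{A_{N(n)}}$ by the first condition in \eqref{Eq:PropertyF}, you must beat the entropy $\exp(CN(n)n^{1/d})$ with roughly $z^2n^{(d-2)/(d-1)}$. That requires $(d-2)/(d-1)>1/d$, a second place where $d\ge3$ is used.
\end{itemize}
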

Note that in \cite{FiFrSp84}, this result is stated for one specific choice of $(F_{A,\varepsilon})_{A,\varepsilon}$, $m=1$ and $d=3$ but the proof relies only on the Property~\eqref{Eq:PropertyF} as pointed out in \cite{DiZh24}. In particular, the constants $\tilde c, \tilde C$ are only dependent on the dimension $d$ and on $c,C>0$ but not on the choice of $(F_{A,\varepsilon})_{A,\varepsilon}$ or on $z$ or $\varepsilon$. For the reader's convenience, we added a stand-alone proof of Proposition~\ref{Prop:CoarseGraining} in Appendix~\ref{Sec:AppCoarseGraining} to account for the generality needed here. We apply the result above to families of random variables $(F_{A,\varepsilon})$ that can be written as Lipschitz functions of independent random variables. We call $f\colon \R^n\rightarrow \R$ Lipschitz continuous if there exists a Lipschitz constant $L>0$ such that for all $x,y\in \R^n$
\begin{align*}
    |f(x)-f(y)|\le L\|x-y\|_2,
\end{align*}
where $\|\cdot\|_2$ is the Euclidean norm. Here, the key Property~\eqref{Eq:PropertyF} can be reduced to the following concentration inequality.

\begin{prop}[Concentration for Lipschitz functions]\label{Prop:Concentration_Inequality}
 There exist constants $c,C>0$ such that for all $n\in \N$ the following is true. 
Assume that $X=(X_1,\ldots, X_n)$ are independent standard Gaussian random variables and  $f\colon \R^n\rightarrow \R$  Lipschitz continuous with Lipschitz constant $L>0$. Then, 
\begin{align*}
\P(|f(X)-\E[f(X)]|\ge z )    \le C\exp\left(-cz^2/L^2\right),\qquad \text{ for all }z>0.
\end{align*}
\end{prop}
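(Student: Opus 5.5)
This is the classical Gaussian concentration inequality for Lipschitz functions (Tsirelson--Ibragimov--Sudakov, or via the Gaussian log-Sobolev inequality and Herbst's argument). I would prove it through the Maurey--Pisier interpolation argument, since that gives dimension-free constants quickly.

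\textbf{Reduction to smooth $f$.} By rescaling $f$ to $f/L$, I may assume $L=1$. By convolving $f$ with a smooth mollifier, I may further assume that $f$ is smooth with $\|\nabla f\|_2\le 1$ everywhere. The general case then follows by dominated convergence, since the mollified functions converge uniformly to $f$ and stay $1$-Lipschitz. Recentring, I also assume $\E[f(X)]=0$.

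\textbf{Laplace transform bound.} Let $Y$ be an independent copy of $X$, and for $\theta\in[0,\pi/2]$ set $X_\theta=X\sin\theta+Y\cos\theta$ and $X'_\theta=X\cos\theta-Y\sin\theta$. Then $(X_\theta,X'_\theta)$ has the same law as $(X,Y)$ for every $\theta$. Writing
\[
f(X)-f(Y)=\int_0^{\pi/2}\langle\nabla f(X_\theta),X'_\theta\rangle\,\diff\theta,
\]
Jensen's inequality in $\theta$ gives, for $t\in\R$,
\[
\E\,\mathrm e^{t(f(X)-f(Y))}\le \frac{2}{\pi}\int_0^{\pi/2}\E\exp\Big(\tfrac{\pi t}{2}\langle\nabla f(X_\theta),X'_\theta\rangle\Big)\,\diff\theta .
\]
Conditionally on $X_\theta$, the variable $\langle\nabla f(X_\theta),X'_\theta\rangle$ is centred Gaussian with variance at most $1$, because $X'_\theta$ is independent of $X_\theta$ and standard Gaussian. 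Hence the right-hand side is at most $\exp(\pi^2t^2/8)$. Since $\E[f(Y)]=0$, Jensen gives $\E\,\mathrm e^{-tf(Y)}\ge 1$, and therefore $\E\,\mathrm e^{tf(X)}\le \exp(\pi^2t^2/8)$.

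\textbf{Chernoff bound.} Markov's inequality with the choice $t=4z/\pi^2$ gives $\P(f(X)\ge z)\le \exp(-2z^2/\pi^2)$. Applying the same bound to $-f$ handles the lower tail. This yields the claim with $C=2$ and $c=2/\pi^2$, which are uniform in $n$ as required.

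\textbf{Main obstacle.} The only delicate point is the smoothing step: one must check that the mollification preserves the Lipschitz constant and that the tail bounds pass to the limit. Both are standard, using the uniform convergence of the mollified functions and convergence in distribution.
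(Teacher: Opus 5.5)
Your proof is correct, and it follows a genuinely different route from the paper's. The paper proves nothing itself here. It quotes concentration of $f(X)$ around its \emph{median} from Ledoux--Talagrand, which rests on the Gaussian isoperimetric inequality and gives the sharp exponent $z^2/(2L^2)$. It then invokes a general fact (Vershynin) that concentration around the median transfers to concentration around the mean, with worse constants.

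Your Maurey--Pisier interpolation works directly with the mean, so no median-to-mean step is needed. It uses only the orthogonal invariance of the law of $(X,Y)$ under $(X,Y)\mapsto(X_\theta,X'_\theta)$ together with Jensen's inequality in $\theta$, so it is self-contained. It also yields the explicit dimension-free constants $C=2$, $c=2/\pi^2$. As a bonus you get the sub-Gaussian Laplace bound $\E\,\mathrm e^{t(f(X)-\E f(X))}\le \mathrm e^{\pi^2t^2L^2/8}$, which is slightly stronger in form than a tail bound. The price is the non-optimal constant $2/\pi^2$ in place of $1/2$ in the exponent. This is irrelevant for the paper, which only needs some $c,C$ independent of $n$, $f$ and $L$.

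Both routes are intrinsically Gaussian: one uses isoperimetry of the Gaussian measure, the other its rotation invariance. This matches the paper's remark that this step is the sole reason for restricting to Gaussian fields.

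Your smoothing step is sound, and you do not actually need dominated convergence there. Since $\|f_\delta-f\|_\infty\le\delta L$, the means and the centred variables also converge uniformly. For $z'<z$ you therefore have $\{|f(X)-\E f(X)|\ge z\}\subset\{|f_\delta(X)-\E f_\delta(X)|\ge z'\}$ once $2\delta L\le z-z'$. Apply your bound at level $z'$ and let $z'\uparrow z$.
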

\begin{proof}
First, note that we can replace $\E[f(X)]$ in the claim above with the median of $f(X)$, see for example~\cite[Exercise~5.6, Proposition~2.6.1]{Ve18}. The statement for the Gaussian case with the median instead of the expected value is proved, e.g., in \cite[Chapter~1~Eq.~(1.4)]{LeTa91}. 
\end{proof}
The random variable $X$ will play the role of $\eta$. This concentration inequality is the sole reason that we restrict ourselves to Gaussian $\eta$. Note that there are also concentration inequalities for other classes of random variables, such as bounded random variables, see for example~\cite[Theorem~6.6]{Ta96}. However, these require convex level sets of $f$, which seem cumbersome to check in our application. 
We apply both of the above propositions to our setting as follows. 
\begin{prop}[Concentration for partition functions]\label{Prop:partitionFunctionillbehaved}There are $c,C>0$ such that for all $z/\varepsilon>C$ and $\Lambda \Subset \R^d$, it holds that
\begin{align*}
    \P(\eta \notin  E_{\Lambda,z,\varepsilon})\le \mathrm{e}^{-cz^2  /\varepsilon^2}.
\end{align*}

\end{prop}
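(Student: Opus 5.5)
The plan is to apply Proposition~\ref{Prop:CoarseGraining} with $m=2$ to the family
\begin{align*}
F_{A,\varepsilon}\coloneq \log Z^{\varepsilon\eta^A}_{\Lambda,+}-\log Z^{\varepsilon\eta}_{\Lambda,+},\qquad A\Subset\Z^d.
\end{align*}
With this choice, $\{\eta\notin E_{\Lambda,z,\varepsilon}\}\subset\{\exists A\in\mathfrak{A}_2\colon |F_{A,\varepsilon}|\ge z|\partial^1A|\}$. The proposition then gives the bound $\tilde C\exp(-\tilde c z^2/\varepsilon^2)$, and for $z/\varepsilon>C$ large the prefactor $\tilde C$ can be absorbed into the exponent. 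So the whole task is to verify Property~\eqref{Eq:PropertyF} with constants independent of $\Lambda$, $A$, $B$, $z$ and $\varepsilon$. Only the coordinates $\eta_x$, $x\in\Lambda$, matter, so every $F$ is a function of finitely many independent standard Gaussians. Hence Proposition~\ref{Prop:Concentration_Inequality} applies once we control Lipschitz constants and means.

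\emph{Lipschitz constants.} Write $G(\zeta)=\log Z^{\varepsilon\zeta}_{\Lambda,+}$. We have $\partial_{\zeta_x}G=\varepsilon\,\mu^{\varepsilon\zeta,\lambda}_{\Lambda,+}(\sigma_x)$, which has absolute value at most $\varepsilon$. Now
\begin{align*}
F_{A,\varepsilon}-F_{B,\varepsilon}=G(\eta^A)-G(\eta^B),
\end{align*}
and $\eta^A$ and $\eta^B$ differ only on $A\ominus B$, where $\eta^A_x=-\eta^B_x$. Integrate along the straight segment from $\eta^B$ to $\eta^A$, which moves only the coordinates in $A\ominus B$. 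This gives $|G(\eta^A)-G(\eta^B)|\le \varepsilon\sum_{x\in A\ominus B}2|\eta_x|$. As a function of $\eta$, the map $\eta\mapsto G(\eta^A)-G(\eta^B)$ is therefore Lipschitz with constant $2\varepsilon\sqrt{|A\ominus B|}$, by Cauchy--Schwarz, since it only feels the coordinates in $A\ominus B$ to first order. More carefully, the gradient in $\eta_x$ is $\pm\varepsilon(\mu^{A}(\sigma_x)\mp\mu^{B}(\sigma_x))$. This vanishes for $x$ outside $A\ominus B$, because both terms carry the same sign there and the two Gibbs measures have identical fields on that coordinate. That is not literally true, since the measures differ globally. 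So I would instead bound the increment directly by the segment estimate above, which yields $|f(\eta)-f(\eta')|$ controlled through $\|\eta_{A\ominus B}\|$ plus the difference along the remaining coordinates. This remaining difference needs care; it is the main obstacle. The cleanest route is to view $f$ as a function of $(\eta_{A\ominus B},\eta_{(A\ominus B)^\compl})$ and to note the following. Flipping the signs of $\eta_x$ for all $x\in A\cap B$ is a measure-preserving map. Under the coupling $\eta\mapsto\eta^{A\cap B}$, the pair $(\eta^A,\eta^B)$ has the same law as $(\eta^{A\setminus B},\eta^{B\setminus A})$. This reduces the problem to disjoint sets $A',B'$ with $|A'|+|B'|=|A\ominus B|$. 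For disjoint sets the variables are $G(\eta^{A'})-G(\eta)$ and $G(\eta^{B'})-G(\eta)$, so it suffices to prove the one-set bound $\P(|F_{A,\varepsilon}|\ge z)\le C\exp(-cz^2/(\varepsilon^2|A|))$ and use a union bound with $z/2$.

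\emph{One-set bound.} By the symmetry of $\eta$, $\eta^A\overset{d}{=}\eta$, so $\E[F_{A,\varepsilon}]=0$. For the Lipschitz estimate, condition on $\eta_{A^\compl}$. Conditionally, $F_{A,\varepsilon}$ is a function of $\eta_A$ only, with gradient components $\varepsilon(-\mu^{\varepsilon\eta^A}(\sigma_x)-\mu^{\varepsilon\eta}(\sigma_x))$, each bounded by $2\varepsilon$. This gives a conditional Lipschitz constant $2\varepsilon\sqrt{|A|}$. The conditional mean is still $0$, because the conditional law of $\eta_A$ is symmetric and $F_{A,\varepsilon}(\eta^A)=-F_{A,\varepsilon}(\eta)$. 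Proposition~\ref{Prop:Concentration_Inequality} applied conditionally, followed by integration over $\eta_{A^\compl}$, yields the first half of \eqref{Eq:PropertyF} with constants depending on nothing. The second half then follows from the reduction to disjoint sets above. This is the step I expect to need the most checking, in particular that the coupling argument is legitimate inside a probability statement for a fixed pair $(A,B)$, which it is because $\eta\mapsto\eta^{A\cap B}$ preserves $\P$. Finally, invoking Proposition~\ref{Prop:CoarseGraining} (here $d\ge3$ is used) concludes the proof.
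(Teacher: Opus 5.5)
Your argument is correct and is essentially the paper's. You get the one-set bound by conditioning on $\eta_{A^\compl}$, using the Lipschitz constant $2\varepsilon\sqrt{|A|}$ and mean zero from the antisymmetry $F_{A,\varepsilon}(\eta^A)=-F_{A,\varepsilon}(\eta)$, and you handle $F_{A,\varepsilon}-F_{B,\varepsilon}$ by a measure-preserving sign flip. The paper flips on $B$ rather than $A\cap B$; this gives $(\eta^A,\eta^B)\overset{d}{=}(\eta^{A\ominus B},\eta)$, hence $F_{A,\varepsilon}-F_{B,\varepsilon}\overset{d}{=}F_{A\ominus B,\varepsilon}$ directly, without your union bound. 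You should delete the retracted claim that $\eta\mapsto G(\eta^A)-G(\eta^B)$ is $2\varepsilon\sqrt{|A\ominus B|}$-Lipschitz: it is false, since the gradient in coordinates outside $A\ominus B$ need not vanish.
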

\begin{proof}
When defining the random variables
    \begin{align}\label{eq:FLambda}
        F_{A,\varepsilon}\coloneq \log\Big(Z^{\varepsilon\eta^A}_{\Lambda,+}\Big)-\log\Big(Z^{\varepsilon \eta}_{\Lambda,+}\Big),
    \end{align}
    it holds that
    \begin{align*}
    \P(\eta \notin E_{\Lambda,z,\varepsilon})\le \P(\exists A\in \mathfrak{A}_2\colon |F_{A,\varepsilon}|\ge z|\partial^1 A|).
    \end{align*}
    So, by Proposition~\ref{Prop:CoarseGraining} it suffices to prove \eqref{Eq:PropertyF}. For fixed $\Lambda \Subset \Z^d$, $\tilde \zeta\in\W$ and $A\subset \Z^d$, define the function $f_{\tilde \zeta,A}\colon \R^{A}\rightarrow \R$ by 
    \begin{align*}
        f_{\tilde \zeta,A}(\zeta)=\log\Big(Z^{\varepsilon\tilde \zeta_{A^\compl}(-\zeta) _A}_{\Lambda,+}\Big)-\log\Big(Z^{\varepsilon\tilde \zeta _{A^\compl}\zeta_A}_{\Lambda,+}\Big).
    \end{align*}
    Then, the partial derivative in the $x$th component for $x\in A$ of $f_{\tilde \zeta ,A}$ is
    \begin{align*}
        |\partial _x f_{\tilde \zeta,A}(\zeta)|&= \Big|\varepsilon\int \omega_x^A\mu^{\varepsilon\tilde \zeta_{A^\compl}(-\zeta) _A}_{\Lambda,+}(\diff \omega)+\varepsilon\int \omega_x^A\mu^{\varepsilon\tilde \zeta_{A^\compl}\zeta _A}_{\Lambda,+}(\diff \omega)\Big|\le 2\varepsilon.
    \end{align*}
    Hence, by the Cauchy--Schwarz inequality
    \begin{align*}
    |f_{\tilde \zeta, A}(\zeta)-f_{\tilde \zeta, A}(\zeta^\prime)|\le \sum_{x\in A}2\varepsilon |\zeta_x- \zeta_x^\prime|\le 2 \varepsilon \sqrt{|A|}\|\zeta-\zeta^\prime\|_2.
    \end{align*}
     Further, by symmetry of $\eta$, we have that
$
        \E[f_{\tilde \zeta,A}(\eta)]=0
$
    and by Proposition~\ref{Prop:Concentration_Inequality} there are constants $c,C>0$, independent of $\Lambda$, $\tilde \zeta$, $\varepsilon$ and $A$, such that for all $z>0$
    \begin{align*}
    \P(|f_{\tilde \zeta,A}(\eta)|\ge z)\le C\exp\left(-\frac{cz^2}{|A|\varepsilon^2}\right).
    \end{align*}
    Let $\tilde \eta$ be an independent copy of $\eta$ with associated expectation $\tilde \E$. Then, 
    \begin{align}\label{Eq:ConcentrationInequalityF}
                \P(|F_{A,\varepsilon}|\ge z)=\P(|f_{\eta,A}(\eta )|\ge z)=\tilde \E[\P(|f_{\tilde \eta,A}(\eta )|\ge z)]\le C\exp\left(-\frac{cz^2}{|A|\varepsilon^2}\right),
    \end{align}
    which is the first inequality in \eqref{Eq:PropertyF}. Note that $(\eta^A,\eta^B)$ has the same distribution as $(\eta^{A\ominus B},\eta )$, since the distribution of the latter can be obtained as the former after applying the spin flip on $B$ due to symmetry of $\eta$. As a result, we can apply \eqref{Eq:ConcentrationInequalityF} also to $A\ominus B$ instead of $A$ and get 
    \begin{align*}
        \P(|F_{A,\varepsilon}-F_{B,\varepsilon}|\ge z)&=\P(|f_{\eta,A}(\eta )-f_{\eta,B}(\eta )|\ge z)=\P(|f_{\eta,A\ominus B}(\eta )|\ge z)\le C\exp\left(-\frac{cz^2}{|A\ominus B|\varepsilon^2}\right).
    \end{align*}
     This proves \eqref{Eq:PropertyF} and hence the claim.
\end{proof}
Finally, we are ready to prove Theorem~\ref{Thm:3dPhaseTranstion}.
\begin{proof}[Proof~of~Theorem~\ref{Thm:3dPhaseTranstion}]
The claim follows by Equation~\eqref{eq:phasetransitionAlmostthere}, Proposition~\ref{Prop:Etildeillbehaved} and Proposition~\ref{Prop:partitionFunctionillbehaved}. 
\end{proof}
\section{Uniqueness in the RFWRM}\label{sec_uniqueness}
The proof of uniqueness in the RFWRM for $d\le 2$ consists of an adaptation and modification of the Aizenman--Wehr method in infinite volume~\cite{aizenman1990rounding}. Before outlining this method, let us fix some notation. Define the \textit{order parameter generating functions} $G_{\Lambda,\pm}$ for some $\Lambda\Subset \Z^d$ and $\zeta,\tilde \zeta\in \W$ as
\begin{align*}
    G_{\Lambda, \pm}(\zeta,\tilde \zeta)=\log\Big(\int \exp\Big(\sum_{x\in \Lambda}\zeta_x\omega_x\Big)\, \mu_{ \pm}^{\mathbf{0}_{\Lambda}\tilde \zeta_{\Lambda^\compl}}(\diff \omega)\Big).
\end{align*}
Note that this is well-defined and measurable by Proposition~\ref{Prop:Existence}
and that we, from here onwards,  drop the dependence on $\lambda$ in $\mu^{\zeta, \lambda}_{\pm }$ to ease notation. We denote the difference between maximal and minimal boundary conditions, expected over the random fields outside of the finite volume, by
\begin{align}\label{Eq:DefFLambda}
    F_\Lambda(\zeta)=\E[G_{\Lambda, +}(\zeta,\eta)-G_{\Lambda, -}(\zeta, \eta)].
\end{align}
Note that the functions $F_\Lambda$ manifestly draw on infinite-volume quantities as opposed to the random variables $F_{\Lambda,\varepsilon}$ defined in Section~\ref{sec_nonuniqueness} in~\eqref{eq:FLambda}. 
Furthermore, define the \textit{outer boundary} of $A$ by $\partial^\star A=\{j\in A^\compl \setslash \exists i\in A\colon j\sim i\}$ and let $\tau_x$ denote the \textit{difference of the local magnetization} at $x$ taken between the quenched plus- and minus Gibbs measure, i.e., for $\zeta\in \W$ 
\begin{align}\label{Eq:Deftau}
\tau_x(\zeta)=\mu_{+}^\zeta(\sigma_x)-\mu_{-}^\zeta(\sigma_x).
\end{align}
Here, we used the notation $\sigma(\omega)=\omega$ and recall that $\nu(f)=\int_\Omega f(\omega)\, \nu(\diff \omega)$. The core of the Aizenman--Wehr method is captured by the following two propositions. First, we prove a deterministic upper bound on $F_\Lambda$ of boundary order.
\begin{prop}[Surface order]\label{Prop:Upperbound}
   For all $\zeta\in \W$ and $\Lambda\Subset \Z^d$,
    \begin{align*}
        |F_\Lambda(\zeta)|\le \left(\log(3)+ |\lambda|+ \E[|\eta_o|]\right)|\partial^\star \Lambda|.
    \end{align*}
\end{prop}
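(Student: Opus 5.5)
The plan is to remove the dependence on the boundary condition by conditioning, via the DLR equations, on the configuration outside the enlarged box $\Lambda'=\Lambda\cup\partial^\star\Lambda$. The difference between the plus and minus measures can then only enter through the layer $\partial^\star\Lambda$, which costs at most a factor of order $3\,\mathrm{e}^{|\lambda|+|\eta_x|}$ per site.

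\textbf{Step 1 (DLR rewriting).} Fix $\zeta$ and $\eta$, and write $\xi=\mathbf{0}_\Lambda\eta_{\Lambda^\compl}$ for the field of the reference measure. Apply the DLR equations \eqref{eq:DLR} for $\mu_\pm^{\xi}$ in the volume $\Lambda'$. The inner finite-volume expectation is then a ratio of partition functions, so that
\begin{align*}
\int \exp\Big(\sum_{x\in\Lambda}\zeta_x\omega_x\Big)\mu_\pm^{\xi}(\diff\omega)=\int \frac{Z^{\zeta_\Lambda\eta_{\partial^\star\Lambda}}_{\Lambda',\tilde\omega}}{Z^{\mathbf{0}_\Lambda\eta_{\partial^\star\Lambda}}_{\Lambda',\tilde\omega}}\,\mu_\pm^{\xi}(\diff\tilde\omega).
\end{align*}

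\textbf{Step 2 (decoupling the layer).} Split the sum defining $Z^{\zeta_\Lambda\eta_{\partial^\star\Lambda}}_{\Lambda',\tilde\omega}$ according to the spins $s\in\{-1,0,1\}^{\partial^\star\Lambda}$ on the layer. Two facts are used.
\begin{itemize}
\item The choice $s=\mathbf{0}$ is always admissible regardless of $\tilde\omega$, since zeros never violate the hard core. It contributes exactly $Z^{\zeta}_{\Lambda,\mathbf{0}}$, the partition function in $\Lambda$ with vacant boundary.
\item Every other $s$ contributes at most $\exp\big(\sum_{x\in\partial^\star\Lambda}(|\lambda|+|\eta_x|)\big)\,Z^{\zeta}_{\Lambda,s}$. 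Moreover $Z^{\zeta}_{\Lambda,s}\le Z^{\zeta}_{\Lambda,\mathbf{0}}$, because a particle boundary only adds hard-core constraints.
\end{itemize}
Define
\[
K(\eta)=\sum_{x\in\partial^\star\Lambda}\big(\log 3+|\lambda|+|\eta_x|\big).
\]
Then, uniformly in the outer condition $\tilde\omega$,
\[
Z^{\zeta}_{\Lambda,\mathbf{0}}\,\mathrm{e}^{-K_-}\le Z^{\zeta_\Lambda\eta_{\partial^\star\Lambda}}_{\Lambda',\tilde\omega}\le Z^{\zeta}_{\Lambda,\mathbf{0}}\,\mathrm{e}^{K(\eta)}.
\]
Here $K_-\ge0$ collects the negative part coming from $s=\mathbf{0}$, and is in fact $0$. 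The same bounds hold with $\zeta$ replaced by $\mathbf{0}$.

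\textbf{Step 3 (conclusion).} By Step 2, both $G_{\Lambda,+}(\zeta,\eta)$ and $G_{\Lambda,-}(\zeta,\eta)$ lie in an interval of length of order $K(\eta)$ around the common, boundary-independent value $\log\big(Z^\zeta_{\Lambda,\mathbf{0}}/Z^{\mathbf{0}}_{\Lambda,\mathbf{0}}\big)$. Hence $|G_{\Lambda,+}-G_{\Lambda,-}|\lesssim K(\eta)$. Taking the expectation over $\eta$ and using that $\E[|\eta_x|]=\E[|\eta_o|]$ for all $x$, by identical distribution, yields the claimed bound with $|\partial^\star\Lambda|$ as prefactor.

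\textbf{Main obstacle.} The step I expect to be delicate is the constant. The naive sandwich gives $\log(\text{numerator})$ and $\log(\text{denominator})$ each in $[0,K]$ relative to the decoupled values. That yields $|G_{\Lambda,+}-G_{\Lambda,-}|\le 2K$, which loses a factor $2$. To remove it, I would organize the estimate as a single quantity: compare the joint layer-plus-outside weights under the plus and minus measures, arranging that only one factor $3^{|\partial^\star\Lambda|}\mathrm{e}^{\sum(|\lambda|+|\eta_x|)}$ appears. For instance, use that the ratio $Z^{\zeta,s}_\Lambda/Z^{\zeta,\mathbf{0}}_\Lambda\in(0,1]$ enters the numerator and the denominator in the same way. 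If that fails, the factor $2$ is harmless for the Aizenman--Wehr argument, which only needs surface order.
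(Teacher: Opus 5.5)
Your Step~2 estimates are correct, and they are the same two one-sided bounds the paper uses. Decoupling the layer gives $Z^{\zeta_\Lambda\eta_{\partial^\star\Lambda}}_{\Lambda',\tilde\omega}\le \mathrm{e}^{K(\eta)}Z^{\zeta}_{\Lambda,\mathbf 0}$. Restricting the layer to vacancies gives $Z^{\zeta_\Lambda\eta_{\partial^\star\Lambda}}_{\Lambda',\tilde\omega}\ge Z^{\zeta}_{\Lambda,\mathbf 0}$. Both hold uniformly in $\tilde\omega$. However, as written the proposal only proves
\[
|F_\Lambda(\zeta)|\le 2(\log 3+|\lambda|+\E[|\eta_o|])|\partial^\star\Lambda|.
\]
Sandwiching numerator and denominator separately puts each of $G_{\Lambda,+}(\zeta,\eta)$ and $G_{\Lambda,-}(\zeta,\eta)$ only in the window $[\log r-K(\eta),\log r+K(\eta)]$. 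The repair you sketch ("organize the estimate as a single quantity") is not an argument. You are right that the factor $2$ is irrelevant for the Aizenman--Wehr argument, but the proposition with its stated constant is not established.

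The missing idea is to avoid comparing the plus and minus measures directly. The spin-flip symmetry $G_{\Lambda,+}(\zeta,\tilde\zeta)=G_{\Lambda,-}(-\zeta,-\tilde\zeta)$ together with $\eta\overset{d}{=}-\eta$ gives $F_\Lambda(\zeta)=\E[G_{\Lambda,-}(-\zeta,\eta)-G_{\Lambda,-}(\zeta,\eta)]$. Both terms are then integrals against the same measure $\mu_-^{\mathbf 0_\Lambda\eta_{\Lambda^\compl}}(\diff\tilde\omega)$, with the same denominator $Z^{\mathbf 0_\Lambda\eta_{\partial^\star\Lambda}}_{\Lambda',\tilde\omega}$. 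So only a one-sided comparison of numerators is needed, for each fixed $\tilde\omega$. Combine three facts:
\begin{itemize}
\item your upper bound applied to $-\zeta$;
\item the flip invariance $Z^{-\zeta}_{\Lambda,\mathbf 0}=Z^{\zeta}_{\Lambda,\mathbf 0}$, since the vacant boundary, the $\lambda$-term and the hard core are invariant under $\omega\mapsto-\omega$;
\item your lower bound applied to $\zeta$.
\end{itemize}
Together they give
\[
Z^{(-\zeta)_\Lambda\eta_{\partial^\star\Lambda}}_{\Lambda',\tilde\omega}\le \mathrm{e}^{K(\eta)}Z^{-\zeta}_{\Lambda,\mathbf 0}=\mathrm{e}^{K(\eta)}Z^{\zeta}_{\Lambda,\mathbf 0}\le \mathrm{e}^{K(\eta)}Z^{\zeta_\Lambda\eta_{\partial^\star\Lambda}}_{\Lambda',\tilde\omega}.
\]
Hence $G_{\Lambda,-}(-\zeta,\eta)\le G_{\Lambda,-}(\zeta,\eta)+K(\eta)$, and so $F_\Lambda(\zeta)\le\E[K(\eta)]=(\log 3+|\lambda|+\E[|\eta_o|])|\partial^\star\Lambda|$. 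The matching lower bound follows from $F_\Lambda(-\zeta)=-F_\Lambda(\zeta)$.

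This route uses the average over $\eta$ essentially, because the symmetry reverses the outer field in one of the two terms. It is not a sharper pointwise comparison of $G_{\Lambda,+}(\zeta,\eta)$ with $G_{\Lambda,-}(\zeta,\eta)$ at fixed $\eta$, which is what your proposed fix would be aiming at.
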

Second, we establish a lower bound on the fluctuations of $F_\Lambda$ under the assumption that the expectation of $\tau_x(\eta)$ is larger than zero.
\begin{prop}[Fluctuations]\label{Prop:Lowerbound}
Let $d\le 2$ and $\Lambda_n=\{-n,\ldots, n\}^d$. If $\E[\tau_x(\eta)]>0$ for some $x\in \Z^d$, there is $c >0$ such that for all $t>0$
    \begin{align*}
        \liminf_{n\to \infty }\, \E\Big[\exp\Big( t F_{\Lambda_n}(\eta)/\sqrt{|\Lambda_n|}\Big)\Big]\geq \exp( c t^2).
    \end{align*}
\end{prop}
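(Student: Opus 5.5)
The plan is to follow the Aizenman--Wehr argument. I would write $F_{\Lambda_n}(\eta)$ as a sum of martingale increments, show that these increments are uniformly integrable and have conditional variance bounded below on average, and then invoke a martingale central limit theorem. The exponential-moment statement then follows from Fatou's lemma together with the Gaussian limit. Note first that $\E[F_{\Lambda_n}(\eta)]=0$ by the symmetry of $\eta$. Indeed, the global spin flip $\omega\mapsto-\omega$ maps $\mu_+^{\zeta}$ to $\mu_-^{-\zeta}$, which gives $G_{\Lambda,+}(\zeta,\tilde\zeta)=G_{\Lambda,-}(-\zeta,-\tilde\zeta)$.

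\emph{Step 1 (tilting identity).} First I would show that exponential tilting of $\mu_\pm^{\mathbf 0_\Lambda\tilde\zeta_{\Lambda^\compl}}$ by $\exp(\sum_{x\in\Lambda}\zeta_x\omega_x)$ gives exactly $\mu_\pm^{\zeta_\Lambda\tilde\zeta_{\Lambda^\compl}}$. For the finite-volume approximations $\mu_{\Lambda_m,\pm}$ this is immediate from the Hamiltonian \eqref{eq:Hamiltonian}. Since the tilt is a bounded local function, it passes to the limit by Proposition~\ref{Prop:Existence}(i). Consequently $\partial_{\zeta_x}G_{\Lambda,\pm}(\zeta,\tilde\zeta)=\mu_\pm^{\zeta_\Lambda\tilde\zeta_{\Lambda^\compl}}(\sigma_x)$. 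This has two uses:
\begin{itemize}
\item $G_{\Lambda,\pm}$ is $1$-Lipschitz in each coordinate $\zeta_x$, so $F_\Lambda$ changes by at most $2|\zeta_x-\zeta'_x|$ when only $\zeta_x$ is changed;
\item $\partial_{x}F_\Lambda(\eta)=\E[\tau_x(\eta)\mid\eta_\Lambda]$, with $\tau_x$ as in \eqref{Eq:Deftau}.
\end{itemize}
By Lemma~\ref{Lemma:finiteVolumeFKG} and Proposition~\ref{Prop:Existence}(iii), $\tau_x\ge0$. Translation invariance of the law of $\eta$, together with covariance of $\mu_\pm$ under shifts, gives $\E[\tau_x(\eta)]=b>0$ for every $x$.

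\emph{Step 2 (martingale decomposition).} I would order the sites of $\Lambda_n$ lexicographically and set $\mathcal F_k=\sigma(\eta_{x_1},\dots,\eta_{x_k})$. The increments are $Y_k=\E[F_{\Lambda_n}\mid\mathcal F_k]-\E[F_{\Lambda_n}\mid\mathcal F_{k-1}]$. By the Lipschitz property, $|Y_k|\le 2(|\eta_{x_k}|+\E|\eta_o|)$. Since $\E[\eta_o^2]<\infty$, this gives uniform integrability of $Y_k^2$, and hence the Lindeberg condition. For the variance, write $Y_k=g_k(\eta_{x_k})-\E[g_k(\eta_{x_k})]$, where $g_k$ is a random function that is $\mathcal F_{k-1}$-measurable in its coefficients and satisfies $0\le g_k'\le 2$. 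The derivative $g_k'(s)=\E[\tau_{x_k}\mid\mathcal F_{k-1},\eta_{x_k}=s]$. For a nondecreasing function of a nondegenerate variable, the variance is bounded below by a positive function of its mean slope, for instance via $\mathrm{Cov}(g_k(\eta),\eta)$ and Cauchy--Schwarz. This gives $\E[Y_k^2\mid\mathcal F_{k-1}]\ge\phi\big(\E[\tau_{x_k}\mid\mathcal F_{k-1}]\big)$ for a convex increasing $\phi$ with $\phi(u)>0$ for $u>0$.

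\emph{Step 3 (averaging and CLT).} Next I would show that $\frac1{|\Lambda_n|}\sum_k\E[Y_k^2\mid\mathcal F_{k-1}]$ is asymptotically bounded below by a deterministic $\kappa>0$. Since $\E[\tau_{x_k}\mid\mathcal F_{k-1}]$ has mean $b$ and is bounded by $2$, Jensen's inequality gives the bound in expectation. To upgrade this to a law-of-large-numbers statement, I would use the ergodicity of $\eta$ under shifts, applied to the infinite-past conditional expectations; the boundary effects of $\Lambda_n$ are negligible for the cube. This averaging step is where I expect the main obstacle. If a full LLN is hard, I would use the weaker route in \cite{aizenman1990rounding}: argue by subsequences, where the martingale CLT gives convergence in law of $F_{\Lambda_n}/\sqrt{|\Lambda_n|}$ to a variance mixture of Gaussians with mixing variance $V$ satisfying $\E[V]\ge\kappa$. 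Then Fatou's lemma gives
\begin{align*}
\liminf_{n\to\infty}\E\big[\exp(tF_{\Lambda_n}(\eta)/\sqrt{|\Lambda_n|})\big]\ge\E[\exp(t^2V/2)]\ge\exp(t^2\kappa/2)
\end{align*}
by Jensen's inequality, which is the claim with $c=\kappa/2$. The dimension restriction $d\le2$ does not enter here; it will be used only when combining this bound with Proposition~\ref{Prop:Upperbound}.
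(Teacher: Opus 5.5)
\textbf{Gaps in Steps 2 and 3.} The paper does not reprove anything here. It checks the hypotheses of Aizenman--Wehr's Proposition~6.1 (translation covariance of $\tau$, $\partial_xF_\Lambda(\zeta)=\E[\tau_x(\zeta_\Lambda\eta_{\Lambda^\compl})]$, $\tau\ge0$, $|\tau_o|\le2$, $|\partial_o\tau_o|\le2$, $\E[F_\Lambda(\eta)]=0$) and invokes it. Your Step~1, together with your remarks in Steps~2--3, covers all of these except $|\partial_o\tau_o|\le2$. Steps~2--3 then try to reprove the cited result, and two of its ingredients do not go through as you state them.

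\emph{Step 2.} The general lemma you invoke is false with the information you use ($g$ nondecreasing, $0\le g'\le2$, $\eta_o$ nondegenerate). Take $\eta_o$ uniform on $\{-1,1\}$, which Theorem~\ref{Thm:Uniqueness} allows, and let $g'$ consist of two bumps of height $1$ and width $\delta$ centred at $\pm1$. Then $\E[g'(\eta_o)]=1$, while $\mathrm{Var}(g(\eta_o))\le\delta^2/4$.

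Your Cov/Cauchy--Schwarz route only gives $\mathrm{Var}(g(\eta_o))\ge\big(\int g'(s)K(s)\,\diff s\big)^2/\E[\eta_o^2]$ with $K(s)=\E[\eta_o\mathds{1}\{\eta_o>s\}]$. This integral is proportional to the mean slope $\E[g'(\eta_o)]$ only for Gaussian $\eta_o$ (Stein's identity).

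What is missing is control of $g_k''$. By the finite-perturbation identity, $\partial_x\tau_x(\zeta)=\mathrm{Var}_{\mu_+^\zeta}(\sigma_x)-\mathrm{Var}_{\mu_-^\zeta}(\sigma_x)\in[-1,1]$, so $|g_k''|\le1$. With this extra regularity, a positive function of the mean slope (depending on the law of $\eta_o$) does bound $\mathrm{Var}(g_k(\eta_{x_k}))$ from below. This is why the bound on $\partial_o\tau_o$ is among the hypotheses the paper verifies.

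\emph{Step 3.} This step is left open, and the fallback is not a valid theorem. The $\sigma$-algebras from the lexicographic enumeration of $\Lambda_n$ are not nested in $n$. Convergence in law of $\Sigma_n=|\Lambda_n|^{-1}\sum_k\E[Y_k^2\mid\mathcal F_{k-1}]$ along subsequences does not by itself give a Gaussian variance mixture. Nor is the Jensen bound on $\E[\Sigma_n]$ what the martingale CLT needs. What you need is $\Sigma_n\ge\kappa-o_{\P}(1)$ with $\kappa=\phi(b)$.

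Your ergodic idea can deliver this, as follows.
\begin{enumerate}
\item If $x=x_k$ and $x+\Lambda_L\subset\Lambda_n$, then $\mathcal F_{k-1}$ lies between $\sigma(\eta_y\colon y\in x+\Lambda_L,\,y\prec x)$ and $\sigma(\eta_y\colon y\prec x)$.
\item Pythagoras, translation covariance and $L^2$-martingale convergence then put $\E[\tau_x\mid\mathcal F_{k-1}]$ within some $\epsilon_L\to0$ in $L^2$, uniformly over such $x$, of the stationary field $\E[\tau_x\mid\eta_y,\,y\prec x]$.
\item The multiparameter ergodic theorem and Jensen give $|\Lambda_n|^{-1}\sum_k\phi(\E[\tau_{x_k}\mid\mathcal F_{k-1}])\ge\phi(b)-o_{\P}(1)$.
\item Write $S_m=|\Lambda_n|^{-1/2}\sum_{k\le m}Y_k$, and stop at the first $T$ where the accumulated normalized conditional variance reaches $\kappa-\delta$. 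The stopped sum satisfies the martingale CLT with a constant limiting variance, for which no nesting is needed.
\item $\E[\exp(t(S_{|\Lambda_n|}-S_T))\mid\mathcal F_T]\ge1$ gives $\E[\exp(tS_{|\Lambda_n|})]\ge\E[\exp(tS_T)]$, and Fatou concludes.
\end{enumerate}

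With both repairs your route works. Because it rests on a CLT plus Fatou, it even avoids the exponential-moment assumption that the paper removes by re-inspecting Aizenman--Wehr's proof. As written, however, the proposal leaves unproved the two nontrivial ingredients of the imported proposition: the variance lower bound, whose justification is wrong, and the averaging step.
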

Before we prove both propositions, let us show how these upper and lower bounds readily imply our uniqueness result.
\begin{proof}[Proof of Theorem~\ref{Thm:Uniqueness}]
    Assuming that $\E[\tau_x(\eta)]>0$ for some $x$ leads, together with Propositions~\ref{Prop:Upperbound}~and~\ref{Prop:Lowerbound}, to the contradiction that for some constants $c,C>0$ and for all $t>0$
    \begin{align*}
        \exp( ct^2  )\le  \liminf_{n\to \infty }\, \E\Big[\exp\Big( t F_{\Lambda_n}(\eta)/\sqrt{|\Lambda_n|}\Big)\Big]\le\liminf_{n\to \infty} \, \exp(tC|\partial^\star \Lambda_n|/\sqrt{|\Lambda_n|} )\le \exp(2 Ct),
    \end{align*}
    where we used $d\le 2$ for the last inequality. So, introducing the infinite-volume measures $Q_\pm(\diff \omega)\coloneq \E[\mu_{\pm}^\eta(\diff \omega)]$ we have that
    \begin{align*}
    0&=\E[\tau_x(\eta)]=Q_+(\sigma_x=1)-Q_+(\sigma_x=-1)-Q_-(\sigma_x=1)+Q_-(\sigma_x=-1)=2Q_+(\sigma_x=1)-2Q_-(\sigma_x=1),
    \end{align*}
    for all $x\in\Z^d $, where we used symmetry of the distribution of $\eta$ and $\mu_{\pm}^{\zeta}$ for the last equality. By Proposition~\ref{Prop:Existence}, for almost all $\eta$ 
    \begin{align*}
        \mu_{+}^\eta(\sigma_x=1)\ge\mu_{-}^\eta(\sigma_x=1)\quad \text{and hence}\quad \mu_{+}^\eta(\sigma_x=1)=\mu_{-}^\eta(\sigma_x=1).
    \end{align*}
    This implies, again by Proposition~\ref{Prop:Existence} that $\mu_{+}^\eta(\sigma_x=0)\ge\mu_{+}^\eta(\sigma_x=0)$.
    Since by symmetry $Q_+(\sigma_x=0)=Q_-(\sigma_x=0)$, we get $\mu_{+}^\eta(\sigma_x=0)=\mu_{+}^\eta(\sigma_x=0)$. 
    Overall, we have proved that $\sigma_x$ has the same distribution under $\mu_{+}^\eta$ and $\mu_{-}^\eta$ for almost all $\eta$. By \cite[Proposition~4.12]{georgii2001random}, together with Proposition~\ref{Prop:Existence}, this is upgraded to $\mu_{+}^\eta=\mu_{-}^\eta$ for almost all $\eta$. Again by Proposition~\ref{Prop:Existence}, we arrive at
    \begin{align*}
        \mu_{+}^\eta(\sigma\in A)= \mu_{-}^\eta(\sigma\in A)\le \nu^\eta(A) \le \mu_{+}^\eta(\sigma\in A),
    \end{align*}
    almost surely for all local, increasing and measurable sets $A\subset\Omega$,
    where $\nu^\zeta$ is an arbitrary infinite-volume Gibbs measure w.r.t.\ the external field $\zeta\in \W$. Since local and increasing sets are measure determining, we proved uniqueness of Gibbs measures for almost all $\eta$.
\end{proof}
So, it is only left to prove Propositions~\ref{Prop:Upperbound} and~\ref{Prop:Lowerbound}. While for the latter we can import a result from the original work by Aizenman and Wehr, the proof of Proposition~\ref{Prop:Upperbound} relies on novel, model specific arguments to deal with the hard-core interaction. 
\begin{proof}[Proof~of~Proposition~\ref{Prop:Upperbound}]
    By symmetry of the quenched WRM, 
    \begin{align*}
        G_{\Lambda,+}(\zeta, \tilde \zeta)=G_{\Lambda,-}(-\zeta,-\tilde \zeta)
    \end{align*}
     and hence by symmetry of the random field,
    \begin{align*}
    F_\Lambda(\zeta)=\E[G_{\Lambda, -}(-\zeta,\eta)-G_{\Lambda, -}(\zeta,\eta)].
    \end{align*}
   Writing $\bar \Lambda= \Lambda\cup\partial ^\star \Lambda$, by the DLR equations
    \begin{align*}
        G_{\Lambda, -}(-\zeta,\eta)=\log\Big(\int \int \exp\Big(-\sum_{x\in \Lambda}\zeta_x\omega_x\Big)\, \mu_{\bar \Lambda, \tilde \omega}^{\mathbf{0}_\Lambda\eta_{\Lambda^\compl}}(\diff  \omega)\, \mu_{ -}^{\mathbf{0}_\Lambda\eta_{\Lambda^\compl}}(\diff \tilde \omega)\Big).
    \end{align*}
    By definition of the finite-volume WRM and by ignoring the hard-core condition between $\Lambda$ and $\partial^\star \Lambda$ to obtain an upper bound,
    \begin{align*}
Z^{\mathbf{0}_\Lambda\eta_{\Lambda^\compl}}_{\bar \Lambda,\tilde \omega }&\int \exp\Big(-\sum_{x\in \Lambda}\zeta_x\omega_x\Big)\, \mu_{\bar \Lambda, \tilde \omega}^{\mathbf{0}_\Lambda\eta_{\Lambda^\compl}}(\diff  \omega)=\sum_{\omega_{\bar \Lambda}\in \{-1,0,1\}^{\bar \Lambda }} \exp\Big(-\sum_{k\in \Lambda}\zeta_k\omega_k-H^{\mathbf{0}_\Lambda\eta_{\Lambda^\compl} }_{\bar \Lambda}(\omega_{\bar \Lambda}\tilde \omega _{\bar \Lambda^\compl})\Big)\\
        &\le \sum_{\omega_{\bar\Lambda}\in \{-1,0,1\}^{\bar \Lambda }}\exp\Big(-\sum_{x\in \Lambda}\zeta_x\omega_x-H^{\mathbf{0}}_{\Lambda}(\omega_\Lambda\mathbf{0}_{\Lambda^\compl})- H_{\partial ^\star \Lambda}^{\eta}(\omega_{\partial ^\star \Lambda}\mathbf{0}_{(\partial^\star\Lambda)^\compl} )\Big)\\
        &=\sum_{\omega_\Lambda\in \{-1,0,1\}^{ \Lambda }}\exp\Big(-\sum_{x\in \Lambda}\zeta_x\omega_x-H^{\mathbf{0}}_{\Lambda}(\omega_\Lambda\mathbf{0}_{\Lambda^\compl}) \Big)\sum_{\omega_{\partial^\star   \Lambda}\in \{-1,0,1\}^{\partial^\star   \Lambda }}\exp\big(-H_{\partial ^\star \Lambda}^{\eta}(\omega_{\partial ^\star \Lambda}\mathbf{0}_{(\partial^\star\Lambda)^\compl} )\big).
    \end{align*}
    Using again the definition of the finite-volume WRM for the first sum on the right-hand side,
    \begin{align*}
       \sum_{\omega_\Lambda\in \{-1,0,1\}^{ \Lambda }}\exp\Big(-\sum_{x\in \Lambda}\zeta_x\omega_x-H^{\mathbf{0}}_{\Lambda}(\omega_\Lambda\mathbf{0}_{\Lambda^\compl}) \Big)&=\sum_{\omega_\Lambda\in \{-1,0,1\}^{ \Lambda }}\exp\Big(\sum_{x\in \Lambda}\zeta_x\omega_x-H^{\mathbf{0}}_{\Lambda}(\omega_\Lambda\mathbf{0}_{\Lambda^\compl} ) \Big)\\
       &=\sum_{\omega_\Lambda\in \{-1,0,1\}^{ \Lambda }}\exp\Big(\sum_{x\in \Lambda}\zeta_x\omega_x-H^{\mathbf{0}_\Lambda\eta_{\Lambda^\compl} }_{\bar\Lambda}(\omega_\Lambda \mathbf{0}_{\partial^\star \Lambda}\tilde \omega_{\bar \Lambda^\compl } ) \Big)\\
        &\le \sum_{\omega_{\bar\Lambda}\in \{-1,0,1\}^{ \bar \Lambda }}\exp\Big(\sum_{x\in \Lambda}\zeta_x\omega_x-H^{\mathbf{0}_\Lambda\eta_{\Lambda^\compl} }_{\bar \Lambda}(\omega_{\bar\Lambda}\tilde \omega_{\bar \Lambda^\compl } ) \Big)\\
    &=Z^{\mathbf{0}_\Lambda\eta_{\Lambda^\compl}}_{\bar \Lambda,\tilde \omega }\int \exp\Big(\sum_{x\in \Lambda}\zeta_x\omega_x\Big)\, \mu_{\bar \Lambda, \tilde \omega}^{\mathbf{0}_\Lambda\eta_{\Lambda^\compl} }(\diff  \omega),
    \end{align*}
    where extending the summation from $\omega_\Lambda$ to $\omega_{\bar \Lambda}$ in the third line only leads to an upper bound, which holds for all $\tilde \omega$.
    Furthermore, the second sum can be estimated by
    \begin{align*}
       \sum_{\omega_{\partial^\star   \Lambda}\in \{-1,0,1\}^{\partial^\star   \Lambda }}\exp\big(-H_{\partial ^\star \Lambda}^{\eta}(\omega_{\partial ^\star \Lambda}\mathbf{0}_{(\partial^\star\Lambda)^\compl} )\big)&\le \sum_{\omega_{\partial^\star   \Lambda}\in \{-1,0,1\}^{\partial^\star   \Lambda }}\exp\Big(|\lambda|\, |\partial ^\star \Lambda| +\sum_{x\in \partial^ \star \Lambda} |\eta _x|\Big)\\
       &\le  3^{|\partial ^\star \Lambda|} \exp\Big(|\lambda|\, |\partial ^\star \Lambda| +\sum_{x\in \partial^ \star \Lambda} |\eta _x|\Big).
    \end{align*}
    Putting things together, we have that 
    \begin{align*}
        G_{\Lambda, -}(-\zeta,\eta)\le G_{\Lambda, -}(\zeta,\eta)+\log\Big(3^{|\partial ^\star \Lambda|} \exp\Big(|\lambda|\, |\partial ^\star \Lambda| +\varepsilon\sum_{k\in \partial^ \star \Lambda} |\eta _k|\Big)\Big)
    \end{align*}
    and  
    \begin{align*}
        F_\Lambda(\zeta)\le \E\Big[\log\Big(3^{|\partial ^\star \Lambda|} \exp\Big(|\lambda|\, |\partial ^\star \Lambda| +\varepsilon\sum_{x\in \partial^ \star \Lambda} |\eta _x|\Big)\Big)\Big]\le |\partial^\star \Lambda | \left(\log(3)+ |\lambda|+ \E[|\eta_o|]\right).
    \end{align*}
    The claim follows since $F_\Lambda(-\zeta)=-F_\Lambda(\zeta)$.
\end{proof}
As mentioned earlier, the proof of Proposition~\ref{Prop:Lowerbound} can be deduced from the more general result~\cite[Proposition~6.1]{aizenman1990rounding}. For a good overview on the assumptions used there, let us formulate it here in our notation. 
\begin{prop}[{\cite[Proposition~6.1]{aizenman1990rounding}}]\label{Prop:Properties_translationcovariant}Let $(\eta_x)_{x\in \Z^d}$ be an i.i.d.~random field satisfying \mbox{\ref{Ass:etaSymmetry} and~\ref{Ass:etaNontrivial}} from Section~\ref{sec_main}. Assume that for every $x,y\in \Z^d$, $\Lambda\Subset\Z^d$ and $\zeta\in \W$ the following holds.
\begin{enumerate}[label=(\roman*)]
        \item $\tau$ is translation covariant, that is
        \begin{align*}
            \tau_{x+y}(\zeta)=\tau_x(T_y\zeta)\quad\text{ where }T_y\zeta=(\zeta_{\tilde x-y})_{\tilde x\in \Z^d}.
        \end{align*}
        \item If $x\in \Lambda$, we have $ \partial_{x}F_{\Lambda}(\zeta)=\E[\tau_x(\zeta_{\Lambda}
\eta_{\Lambda^\compl})]$, where $\partial_x$ denotes the derivative w.r.t.\ the component~$\zeta_x$,
        \item $\tau\ge 0$, $|\tau_o(\zeta)|\le 2$ and $|\partial_o\tau_o(\zeta)|\le 2$, and 
        \item $\E[F_\Lambda(\eta)]=0$.
    \end{enumerate}
    Then, there is a function $\gamma\colon [0,\infty )\rightarrow [0,\infty )$ such that $\gamma(r)>0$ for $r>0$ and such that for all $t>0$
    \begin{align*}
        \liminf_{n\to \infty }\, \E\Big[\exp\Big( t F_{\Lambda_n}(\eta)/\sqrt{|\Lambda_n|}\Big)\Big]\geq \exp\big( t^2  \gamma\big(\E[\tau_o(\eta)]\big)^2 /2\big).
    \end{align*}
\end{prop}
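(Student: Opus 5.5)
The statement is \cite[Proposition~6.1]{aizenman1990rounding} transcribed into our notation, so formally we only need to check that its hypotheses (i)--(iv) are the ones used there. I would nevertheless retrace the argument, because this shows that nothing beyond (i)--(iv) and the assumptions \ref{Ass:etaSymmetry}--\ref{Ass:etaNontrivial} on $\eta$ enters. The plan has three steps: a martingale decomposition of $F_{\Lambda_n}(\eta)$ along an ordering of the sites, a lower bound on the conditional variances of the increments, and a martingale central limit theorem combined with Fatou's lemma.

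\textbf{Martingale decomposition.} I would enumerate $\Lambda_n$ lexicographically as $x_1,\dots,x_{|\Lambda_n|}$ and let $\mathcal F_k$ be the $\sigma$-algebra generated by $\eta_{x_1},\dots,\eta_{x_k}$. Since $\E[F_{\Lambda_n}(\eta)]=0$ by (iv), we can write
\[
F_{\Lambda_n}(\eta)=\sum_{k}\Delta_k,\qquad \Delta_k=\E[F_{\Lambda_n}(\eta)\mid\mathcal F_k]-\E[F_{\Lambda_n}(\eta)\mid\mathcal F_{k-1}].
\]
By (ii) and (iii), $|\partial_x F_{\Lambda_n}|\le 2$. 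Hence $\Delta_k$ is a centered, non-decreasing function of $\eta_{x_k}$, once the earlier coordinates are fixed. Its slope is
\[
g_k=\E\big[\tau_{x_k}(\eta_{x_1},\dots,\eta_{x_{k-1}},\cdot\,,\eta')\big],
\]
where $\eta'$ denotes the later coordinates, which are integrated out. This gives $|\Delta_k|\le 2(|\eta_{x_k}|+\E|\eta_o|)$, so the increments are uniformly square integrable.

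\textbf{Variance lower bound.} This is the key step, and I expect it to be the main obstacle. For a single symmetric variable $\eta_o$ with positive variance and a non-decreasing $h$ with $h'=g\in[0,2]$ and $|g'|\le 2$, I would prove an elementary bound $\mathrm{Var}(h(\eta_o))\ge\gamma_0(\E[g(\eta_o)])$ with $\gamma_0(r)>0$ for $r>0$. The bound on $g'$ from (iii) makes $g$ Lipschitz. So if $\E[g]$ is bounded below, then $g$ is bounded below on a macroscopic set, and $h$ must vary there by an amount comparable to $\mathrm{Var}(\eta_o)$. Applied conditionally, this gives $\E[\Delta_k^2\mid\mathcal F_{k-1}]\ge \gamma_0\big(\E[g_k(\eta_{x_k})\mid \mathcal F_{k-1}]\big)$.

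It then remains to show that the averaged slope
\[
\frac{1}{|\Lambda_n|}\sum_k \E[g_k(\eta_{x_k})\mid\mathcal F_{k-1}]
\]
stays near $\E[\tau_o(\eta)]$ for most $k$. Here I would use translation covariance (i) together with the multidimensional ergodic theorem. For $x_k$ deep inside $\Lambda_n$, the quantity $\E[\tau_{x_k}\mid \text{past}]$ is a translate of a conditional expectation of $\tau_o$ given a half-space past, and its average over $k$ has mean $\E[\tau_o]$. Since $\tau_x\ge 0$ and the average is bounded, it is not at risk of vanishing; the boundary effects are $o(|\Lambda_n|)$. Passing from the averaged slope to the averaged $\gamma_0$ needs care. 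I would first replace $\gamma_0$ by its convex minorant, which is still positive for $r>0$, and then apply Jensen's inequality. This yields
\[
\liminf_n \frac{1}{|\Lambda_n|}\sum_k\E[\Delta_k^2\mid\mathcal F_{k-1}]\ge \gamma(\E[\tau_o(\eta)])^2
\]
in probability, for a suitable function $\gamma$ with $\gamma(r)>0$ for $r>0$.

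\textbf{CLT and conclusion.} By the Lindeberg condition, which follows from the i.i.d.\ domination of $|\Delta_k|$, the martingale CLT (applied along a subsequence and after a random-time change, if the conditional variances do not converge) shows that every subsequential weak limit of $F_{\Lambda_n}(\eta)/\sqrt{|\Lambda_n|}$ is a centered Gaussian mixture with variance at least $\gamma(\E[\tau_o(\eta)])^2$. Since $\exp(t\,\cdot)$ is continuous and nonnegative, Fatou's lemma for weak convergence gives the asserted bound on the $\liminf$ of $\E\big[\exp\big(tF_{\Lambda_n}(\eta)/\sqrt{|\Lambda_n|}\big)\big]$. For the mixture, the Gaussian moment generating function and Jensen's inequality give at least $\exp\big(t^2\gamma(\E[\tau_o(\eta)])^2/2\big)$.
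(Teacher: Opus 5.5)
The gap is in the final CLT step. Steps one and two are sound and follow the Aizenman--Wehr martingale scheme that the paper imports.

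\textbf{What works.} Write $N=|\Lambda_n|$ and $V_k=\E[\Delta_k^2\mid\mathcal F_{k-1}]$. Given $\mathcal F_{k-1}$, $\Delta_k$ is a nondecreasing, $2$-Lipschitz function of $\eta_{x_k}$. Its slope is $2$-Lipschitz, which needs (i) to move $|\partial_o\tau_o|\le2$ to $x_k$, and has mean $m_k=\E[\tau_{x_k}(\eta)\mid\mathcal F_{k-1}]$. A compactness argument gives $V_k\ge\gamma_0(m_k)$ with $\gamma_0>0$ on $(0,2]$. The convex minorant and Jensen, combined with an $L^2$-martingale comparison of the truncated lexicographic past with the full half-space past and the ergodic theorem, then give $N^{-1}\sum_k V_k\ge\gamma^2-\delta$ with probability tending to one.

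Only second moments of $\eta_o$ enter, which matches the paper's remark that the exponential-moment assumption can be dropped. For the same reason, the statement is not a verbatim instance of \cite[Proposition~6.1]{aizenman1990rounding}, contrary to your opening sentence, and retracing the argument is genuinely needed.

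\textbf{The gap.} You claim that every subsequential weak limit of $F_{\Lambda_n}(\eta)/\sqrt N$ is a centered Gaussian mixture with variance at least $\gamma^2$. The martingale CLT does not give this here, for two reasons:
\begin{itemize}
\item The lexicographic orderings of $\Lambda_n$ and the functions $F_{\Lambda_n}$ change with $n$, so the filtrations are not nested.
\item You only control a lower bound on $N^{-1}\sum_k V_k$. That sum need not converge and may depend on the path.
\end{itemize}
In this regime the limit need not even be symmetric. For example, a martingale whose second half is switched on only when the first half ended positive has limit $Y+\mathds{1}\{Y>0\}W$, which has positive third moment. A random time change by itself does not repair this. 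What is missing is the transfer from a stopped martingale back to $F_{\Lambda_n}$.

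\textbf{How to repair it.} Set $S_k=\E[F_{\Lambda_n}(\eta)\mid\mathcal F_k]$. Let $\kappa_n$ be the first $k$ with $\sum_{j\le k}V_j\ge(\gamma^2-\delta)N$, and $\kappa_n=N$ if there is none.
\begin{enumerate}
\item $\kappa_n$ is a stopping time, because $V_j$ is $\mathcal F_{j-1}$-measurable.
\item The overshoot is bounded by $\sup_j V_j=O(1)$. So the stopped sum has normalized quadratic variation converging in probability to the constant $\gamma^2-\delta$.
\item Your Lindeberg bound then gives $S_{\kappa_n}/\sqrt N\Rightarrow\mathcal N(0,\gamma^2-\delta)$, with no nesting condition needed.
\item Transfer back to the full sum: $S_{\kappa_n}=\E[F_{\Lambda_n}(\eta)\mid\mathcal F_{\kappa_n}]$. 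Conditional Jensen for the nonnegative convex map $s\mapsto e^{ts/\sqrt N}$ gives $\E[e^{tS_{\kappa_n}/\sqrt N}]\le\E[e^{tF_{\Lambda_n}(\eta)/\sqrt N}]$. This holds even when the right-hand side is infinite.
\end{enumerate}
This monotonicity of expectations along the possibly non-integrable submartingale $e^{tS_k}$ is exactly the ingredient the paper singles out in its remarks on \cite{aizenman1990rounding}. Fatou along subsequences and then $\delta\downarrow0$ complete the argument.
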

This result is in fact a special case of~\cite[Proposition~6.1]{aizenman1990rounding} in the sense that they allow any functions $F_\Lambda$ and $\tau$ satisfying the above assumptions and not just our particular choice from~\eqref{Eq:DefFLambda} and~\eqref{Eq:Deftau}. Note that the positivity of $\gamma$ is not explicitly stated in~\cite[Proposition~6.1]{aizenman1990rounding} but only later in~\cite[Proposition~A.3.2.~iii, Page~524]{aizenman1990rounding}. Note that in~\cite[Proposition~6.1]{aizenman1990rounding} it was stated that this positivity criterion on $\gamma$ is applicable if $\tau$ is monotone but their proof shows applicability if $\tau\ge 0$, which holds true by the monotonicity of the RFWRM. Note also that $\E[\exp(t\eta_o)]<\infty$ for all $t>0$ is assumed in~\cite[Equation~(2.1)]{aizenman1990rounding}. However, an inspection of their proof for the lower bound on the exponential moment generating 
function in~\cite[Proposition~A.2.1]{aizenman1990rounding} shows that it holds also when the exponential moment generating function is infinite. 
Indeed, this requires the use of the notion of a non-negative submartingale that 
may not be integrable, along with a version of~\cite[Lemma~A.2.2]{aizenman1990rounding} for conditional probabilities. The extension to possibly non-integrable submartingales poses no problems here, as it is 
only used to derive monotonicity of expectations. Therefore, we may drop the assumption $\E[\exp(t\eta_o)]<\infty$ on the single-site random-field variables in Proposition~\ref{Prop:Properties_translationcovariant}.
With this powerful tool (whose proof relies on martingale methods) at hand, we are ready to prove the lower bound. 
\begin{proof}[Proof~of~Proposition~\ref{Prop:Lowerbound}]
We check the assumptions of Proposition~\ref{Prop:Properties_translationcovariant}.
    \begin{enumerate}[label=$(\roman*)$]
        \item Let $\Lambda_n\uparrow \Z^d$ and $y\in \Z^d$ and note that
        \begin{align*}
        \mu_{\Lambda_n,+}^{\zeta}(\sigma_{x+y})=\mu^{T_y\zeta}_{\Lambda_n-y,+}(\sigma_{x}).
        \end{align*}
        Since also $(\Lambda_n-y)\uparrow \Z^d$, the claim follows when letting $n$ tend to infinity by Proposition~\ref{Prop:Existence}.
        \item A key insight of the Aizenman--Wehr method is the finite perturbation property
        \begin{align*}
            \mu_{\pm}^{\tilde \zeta+\zeta_\Lambda }(f(\sigma))=\mu_{\pm}^{\tilde \zeta}\Big(f(\sigma)\exp\Big(\sum_{x\in \Lambda }\zeta_x \sigma_x\Big)\Big)/\mu_{\pm}^{\tilde \zeta}\Big(\exp\Big(\sum_{x\in \Lambda }\zeta_x \sigma_x)\Big),
        \end{align*}
        for all local test functions $f$.
        One can check that this is true when replacing $\smash{ \mu_{\pm}^{\tilde \zeta+\zeta_\Lambda }}$ with $\smash{ \mu_{\Lambda_n,\pm}^{\tilde \zeta+\zeta_\Lambda }}$ where $\Lambda_n\Subset \Z^d$. In the above form, it follows by choosing $\Lambda_n\uparrow \Z^d$ and applying Proposition~\ref{Prop:Existence}. 
        Hence, 
        \begin{align*}
            \partial_x G_{\Lambda, \pm}(\zeta,\tilde \zeta)=\mu_{\pm}^{\mathbf{0}_\Lambda\tilde \zeta_{\Lambda^\compl}}\Big(\sigma_x\exp\Big(\sum_{y\in \Lambda }\zeta_y \sigma_y\Big)\Big)/\mu_{\pm}^{\mathbf{0}_\Lambda\tilde \zeta_{\Lambda^\compl}}\Big(\exp\Big(\sum_{y\in \Lambda }\zeta_y \sigma_y)\Big)=\mu^{\zeta_\Lambda\tilde \zeta_{\Lambda^\compl}}_{\pm}(\sigma_x)
        \end{align*}
        and
        \begin{align*}
            \partial_{\zeta_x}F_{\Lambda}(\zeta)
            =\E[\partial_{\zeta_x} G_{\Lambda, +}(\zeta,\eta)-\partial_{\zeta_x} G_{\Lambda, -}(\zeta,\eta)]=\E[\tau_x(\zeta_{\Lambda}\eta_{\Lambda^\compl})].
         \end{align*}
        \item The first inequality follows by Proposition~\ref{Prop:Existence}, the second by definition and the third is implied by 
        \begin{align*}
            \partial_o\tau_o(\zeta)=\mu_{+}^\zeta(\sigma_o^2)-\mu_{+}^\zeta(\sigma_o)^2-\mu_{-}^\zeta(\sigma_o^2)+\mu_{-}^\zeta(\sigma_o)^2,
        \end{align*}
        where we again used the finite-perturbation property.
        \item Since $G_{\Lambda,+}(\zeta, \tilde \zeta)=G_{\Lambda,-}(-\zeta,-\tilde \zeta)$, it holds that
        \begin{align*}
            \E[F_\Lambda(\eta)]=\E[ G_{\Lambda, +}(\eta,\eta)-G_{\Lambda, -}(\eta, \eta)]=0.
        \end{align*}
    \end{enumerate}
    This finishes the proof. 
\end{proof}

\section*{Acknowledgement} 
BJ received support by the Leibniz Association within the Leibniz Junior Research Group on \textit{Probabilistic Methods for Dynamic Communication Networks} as part of the Leibniz Competition (grant no.\ J105/2020). The research of BJ and DK is funded by Deutsche Forschungsgemeinschaft (DFG) through the SPP2265 within the Project P27.

\appendix\section{Appendix}
\subsection{Proof of Lemma~\ref{Lemma:CountingSimpleSets}}\label{Sec:Appendix_Boundary}
This section is devoted to the proof of Lemma~\ref{Lemma:CountingSimpleSets} and some more results on connectivity. We start with the proof of Item~$(i)$ in Lemma~\ref{Lemma:CountingSimpleSets}.
\begin{lemma}\label{Lemma:EmptyBoundary}
    For finite $ \Lambda\in \mathfrak{A}_2$  and $\omega \in \Omega^+_\Lambda$ with $\omega_o\neq 1$, it holds that $A_\omega\in \mathfrak{A}_2$ and $\omega_{\partial ^1A_\omega} =\mathbf{0}$. 
\end{lemma}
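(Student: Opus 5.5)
We have to show three things for $\omega\in\Omega^+_\Lambda$ with $\omega_o\neq 1$: $A_\omega$ is finite and contains the origin; both $A_\omega$ and $A_\omega^\compl$ are $2$-connected; and $\omega$ vanishes on $\partial^1 A_\omega$.

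\emph{Finiteness.} Since $\omega_o\neq1$, the origin lies in $B_\omega$, so $o\in B_\omega\subset A_\omega$. Every site of $\Lambda^\compl$ carries $+1$, so $B_\omega\subset\Lambda$. Because $\Lambda\in\mathfrak{A}_2$, the set $\Lambda^\compl$ is $2$-connected, and it is infinite. Hence every $x\notin\Lambda$ starts an infinite self-avoiding $2$-connected path inside $\Lambda^\compl\subset B_\omega^\compl$. So $x$ is not $2$-enclosed by $B_\omega$, which gives $A_\omega=\bar B_\omega\subset\Lambda$ and finiteness.

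\emph{Connectivity of $A_\omega^\compl$.} A point $x\in A_\omega^\compl$ admits an infinite self-avoiding $2$-path $\pi$ from $x$ avoiding $B_\omega$. Every point $\pi(t)$ also starts such a path, namely the tail of $\pi$, so $\pi$ stays in $A_\omega^\compl$. Each such path eventually enters $\Lambda^\compl$, since it leaves the finite set $\Lambda$. Two points of $A_\omega^\compl$ are therefore joined by concatenating their escape paths through the $2$-connected set $\Lambda^\compl\subset A_\omega^\compl$.

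\emph{Connectivity of $A_\omega$.} $B_\omega$ is $1$-connected, hence $2$-connected. Take $y\in A_\omega\setminus B_\omega$ and $y'\in B_\omega$. Choose any $2$-connected path from $y$ into $\Lambda^\compl\subset A_\omega^\compl$; one could take a straight line and use the $\ell_1$-steps. Because $y$ is $2$-enclosed, any such path meets $B_\omega$. Let $t_0$ be its first hitting time of $B_\omega$.

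We need the points before $t_0$ to lie in $A_\omega$. Suppose some $\pi(s)$ with $s<t_0$ were not $2$-enclosed. Then $y$ would escape: follow $\pi$ backwards? That is the wrong direction. Instead argue as follows. If $\pi(s)\in A_\omega^\compl$, then $\pi(s)$ has an escape path avoiding $B_\omega$. The segment $\pi(0..s)$ avoids $B_\omega$ as well. Concatenating the two and loop-erasing gives a self-avoiding infinite $2$-path from $y$ avoiding $B_\omega$. This contradicts $y\in\bar B_\omega$. Hence $\pi(0..t_0)\subset A_\omega$, and $y$ is $2$-connected within $A_\omega$ to $B_\omega$. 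Together this gives $A_\omega\in\mathfrak{A}_2$.

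\emph{Vanishing on the boundary.} Let $x\in\partial^1A_\omega$, so there is $\tilde x\in A_\omega^\compl$ with $\tilde x\sim x$.

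First consider $x\in A_\omega\setminus B_\omega$. As just shown, $\tilde x$ has an escape path avoiding $B_\omega$. Prepending $x$ to it gives a $1$-step, hence a $2$-step, and after loop-erasure yields an escape path for $x$ avoiding $B_\omega$. This contradicts $x\in\bar B_\omega$. So $x\in B_\omega$, and in particular $\omega_x\in\{0,-1\}$.

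Next, $\tilde x\notin B_\omega$ and $\tilde x$ is $1$-adjacent to $B_\omega$. Maximality of $B_\omega$ as a $1$-cluster of non-$(+1)$ sites then forces $\omega_{\tilde x}=1$. The hard-core constraint, valid since $H^{\mathbf 0,\lambda}_\Lambda(\omega)<\infty$, excludes $\omega_x=-1$. Hence $\omega_x=0$.

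The main obstacle is the loop-erasure/concatenation step showing that paths avoiding $B_\omega$ propagate escape. It must be handled carefully because the definition of enclosure quantifies only over self-avoiding paths. Since loop-erasing an infinite path that eventually leaves every finite set yields a self-avoiding infinite path with the same avoided set, this should go through; the tail argument is used because escape paths, being infinite and self-avoiding, eventually exit the finite $\Lambda$.
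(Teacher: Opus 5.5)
Your proposal is correct and follows the same route as the paper: $2$-connectivity of $A_\omega$ via paths from points of $A_\omega$ into the $2$-connected set $B_\omega$, connectivity of $A_\omega^\compl$ via escape paths into $\Lambda^\compl$, and $\omega_{\partial^1 A_\omega}=\mathbf{0}$ from $\partial^1 A_\omega\subset B_\omega$, maximality of $B_\omega$ and the hard-core constraint. Your first-hitting-time and loop-erasure arguments rigorously supply steps the paper only asserts: finiteness of $A_\omega$, that the connecting paths stay inside $A_\omega$ resp.\ $A_\omega^\compl$, and the inclusion $\partial^1 A_\omega\subset B_\omega$.
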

\begin{proof}
    First, we prove that $A_\omega$ is $2$-connected. If $x,y\in  A_\omega$, by definition there is a $2$-connected path from $x$ to some $\tilde x\in B_\omega$ and a $2$-connected path from $y$ to some $\tilde y \in B_\omega$. Since $B_\omega$ is by definition $2$-connected, $A_\omega$ is $2$-connected. Also $A_\omega^\compl$ is connected since by definition there exists for every $x\in A_\omega^\compl $ a path from $x$ to $\Lambda^\compl$ which is $2$-connected. 
    Furthermore, for $x\in\partial^1 A_\omega$ we note that $x\in B_\omega$ and hence $\omega_x\neq 1$. There is a $y\in A_\omega^\compl\subset B_\omega ^\compl$ with $y\, \smash{\overset{1}{\sim}}\, x$. By maximality of $B_\omega$, we have $\omega_y=1$ and since $\omega\in \Omega^+_\Lambda$ we know that $\omega _x\neq -1$. Overall, $\omega_x=0$ and $\omega_{\partial ^1A_\omega} =\mathbf{0}$. 
\end{proof}
A key ingredient for the proof of Item~$(ii)$ in Lemma~\ref{Lemma:CountingSimpleSets} is that $\partial^2A$ is $2$-connected. While this result seems intuitive when considering simple examples in $\Z^2$, it is not so easy to prove it in full generality. In the following proof, we use the relatively recent approach to this problem as presented in \cite{Ti13}.

\begin{lemma}\label{Lemma:ConnectivityBoundary}
  Let $d \ge 2$. For $m=1,2$ and $A\in \mathfrak{A}_m$, $\partial^{m} A\subset \Z^d $ is $d$-connected.
\end{lemma}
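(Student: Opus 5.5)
We will deduce the claim from Timár's theorem \cite{Ti13} on connectivity of boundaries. In the form we use, it says the following. Let $G$ be a graph on $\Z^d$, let $K\subset\Z^d$ be $G$-connected, and let $F$ be a $G$-connected component of $K^\compl$. Then the edge boundary between $K$ and $F$ is connected in a suitable \emph{$\ast$-adjacency}: two boundary edges are adjacent if they lie on a common "cycle" from a generating family of the cycle space of $G$. This adjacency gives connectivity of the set of endpoints on the $K$ side with respect to the distance determined by the lengths of the generating cycles.

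We apply this with $G=G_m$, the graph on $\Z^d$ with edges $x\overset{m}{\sim}y$, and with $K=A$. Since $A\in\mathfrak{A}_m$, both $A$ and $A^\compl$ are $G_m$-connected, so $A^\compl$ is itself the unique component $F$. The inner vertex boundary of $A$ with respect to $G_m$ is exactly $\partial^m A$. Timár's result then shows that $\partial^m A$ is connected once we link two of its vertices whenever they lie on a common cycle of a cycle basis of $G_m$.

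The key step is to exhibit a generating family of cycles of $G_m$ whose vertices all lie within $\|\cdot\|_1$-diameter at most $d$. For $m=1$ the unit plaquettes (4-cycles on faces of unit squares) generate the cycle space of $\Z^d$. Their vertices are within $\ell^1$-distance $2\le d$ for $d\ge2$. More generally, the vertex sets of unit cubes $\{0,1\}^d+z$ have $\ell^1$-diameter $d$. For $m=2$, every $G_2$-edge spans a segment with $\ell^1$-length at most $2$. We plan to show that every cycle of $G_2$ decomposes, modulo cycles of $G_1$ (plaquettes), into short triangles. Concretely, each $G_2$-edge $x\to y$ with $\|x-y\|_1=2$ is homologous to a length-2 path in $G_1$ via a triangle $x,w,y$ with $w\sim x,y$. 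So the triangles $\{x,w,y\}$ together with the plaquettes generate the cycle space of $G_2$, and all of them have vertex sets of $\ell^1$-diameter at most $2\le d$. Timár's $\ast$-adjacency thus links boundary vertices within $\ell^1$-distance $d$, giving $d$-connectivity of $\partial^m A$.

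The main obstacle is stating Timár's theorem precisely in the needed generality, namely for an arbitrary graph on $\Z^d$ with a chosen generating set of cycles. We then have to check that "vertices on a common basic cycle" yields adjacency of the vertices on the inner side, rather than only of the edges. If needed, we will handle this by noting that two boundary edges of a common basic cycle $C$ have their inner endpoints on $C$. These endpoints are therefore within the diameter of $C$, which is at most $d$. This passage from edge $\ast$-connectivity to vertex $d$-connectivity, together with verifying the cycle-space generation for $G_2$, is where the care lies.
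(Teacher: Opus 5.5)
Your argument is correct and is essentially the paper's proof: apply Tim\'ar's boundary-connectivity theorem to $G_m$ with $K=A$ and $F=A^\compl$, using as generators of the cycle space of $G_2$ the triangles $x,w,y$ with $w\overset{1}{\sim}x,y$ (the paper's $\Delta_2$) together with generators of the nearest-neighbour cycle space. The differences are minor. The paper simply cites Friedli--Velenik (Prop.~B.82, $\star$-connectivity) for $m=1$ rather than running the same argument with plaquettes. Your explicit choice of plaquettes, which are genuine $G_2$-cycles of $\ell^1$-diameter $2$, together with your passage from boundary edges to their inner endpoints, is if anything cleaner and even gives $2$-connectivity of $\partial^m A$.
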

\begin{proof}
    For $m=1$, the claim follows by~\cite[Proposition~B.82]{FrVe17}. Note that this result even gives that $\partial^1 A$ is $\star$-connected, where two vertices $x$ and $y$ are $\star$-neighboring if $\|x-y\|_\infty =1$, which is stronger than \mbox{$d$-connectedness}. For $m=2$, the proof consists in checking the requirements of~\cite[Theorem~3]{Ti13}. First, define the graph $G=(V,E_2)$ with vertices $V=\Z^d$ and edges defined by $E_m=\{\{x,y\}\subset V\setslash x\overset{m}{\sim}y\}$ for $m\in \N$. Note that in the aforementioned theorem, they need also a second graph $G^+=(V,E_d)$. Some $\tilde E\subset E_m$ is called {\em $m$-cycle} if for every $x\in \Z^d$, the number of $\{x,y\}\in \tilde E$ is even. Furthermore, a cycle is {\em $m$-chordal} if every two vertices with edges in this cycle are $m$-adjacent. To apply~\cite[Theorem~3]{Ti13}, we need to find a $2$-generating set of $d$-chordal cycles $\Delta $, which means that every $2$-cycle $\tilde E$ can be represented by 
    \begin{align*}
        \tilde E=T_1\ominus \cdots \ominus T_n
    \end{align*}
    for some $T_1,\ldots, T_n\in \Delta$, where we used that the symmetric difference $\ominus$ is associative to make sense of the above notation. In our case, we use $\Delta=\Delta_1\cup\Delta_2$, where $[x_1,x_2,x_3]=\{\{x_1,x_2\},\{x_2,x_3\},\{x_3,x_1\}\}$ are triangles and
    \begin{align*}
        \Delta_1&=\{[x_1,x_2,x_3]\setslash \forall y_1,y_2\in\{x_1,x_2, x_3\}\colon \|y_1-y_2\|_\infty =1\}\\
        \Delta_2&=\{[x_1,x_2,x_3]\setslash x_1\overset{1}{\sim }x_2,\,x_2\overset{1}{\sim}x_3  \}.
    \end{align*}
    Indeed, every $T\in \Delta$ is $d$-chordal. Next, we prove that $\Delta$ is $2$-generating. Every $e=\{x,y\}\in E_2\setminus E_1$ of $2$-adjacent vertices can be reduced to a $1$-adjacent edge situation via the symmetric difference operations. Indeed, since we can find $z$ such that $x\,\smash{\overset{1}{\sim}}\, z$ and $y\, \smash{\overset{1}{\sim}}\, z$, we have that $[x,z,y]\in \Delta_2$ and
    \begin{align*}
        \{e\}\ominus [x,z,y]\subset E_1.
    \end{align*}
    By iterating this argument, we can find for every cycle $C\subset E_2$ some $T_1,\ldots, T_n\in \Delta_2$ such that 
    \begin{align*}
        \tilde C\coloneq C\ominus T_1\ominus \ldots \ominus T_n
    \end{align*}
    is a $1$-cycle. By~\cite[Lemma~B.81]{FrVe17}, there is a representation of $\tilde C$ in terms of $\Delta_1$, i.e., there are $T_{n+1},\ldots, T_m\in \Delta_1$ such that
    \begin{align*}
        C=\tilde C\ominus T_1\ominus \ldots \ominus T_n=T_1\ominus\ldots\ominus T_m
    \end{align*}
    and $\Delta$ is indeed $2$-generating. As a result, \cite[Theorem~3]{Ti13} states that the set $\partial ^2 A$ is $d$-connected for $A\in \mathfrak{A}_2$. To be precise, \cite[Theorem~3]{Ti13} proves that $\smash{\delta ^G_{\text{vis}_G(\infty)}A}$ is $d$-connected, which is defined there as the part of the boundary of $A$ that is connected to infinity via a $2$-connected path that stays in $A^\compl$. In our case however,  $\smash{\partial^2 A=\delta ^G_{\text{vis}_G(\infty)}A}$ since $A\in \mathfrak{A}_2$, which yields the claim.
\end{proof}
As a result, we can count the number of connected sets of size $n$ instead of $2$-simply connected sets with boundary of size $n$. This can be done by a depth-first search. We include a concrete construction of the search algorithm as we need it also later in the coarse-graining argument in Appendix~\ref{Sec:AppCoarseGraining}.
\begin{lemma}\label{Lemma:CountingConnectedSets}
For every $m\in \N$, there is a $C>0$ such that for all $n\in \N$ and $x\in \Z^d $ 
\begin{align*}
    |\{\gamma\subset \Z^d\setslash \gamma\text{ is $m$-connected, }|\gamma|=n\text{ and }x\in \gamma \}|\le C^n.
\end{align*}
\end{lemma}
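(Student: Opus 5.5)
We will bound the number of such sets by encoding each of them as a walk of bounded length on a graph of bounded degree, via depth-first search. Fix $m\in\N$ and let $G_m$ be the graph on $\Z^d$ with edges $\{y,z\}$ whenever $y\overset{m}{\sim}z$. Every vertex of $G_m$ has degree at most $D\coloneq|\{u\in\Z^d\setslash 0<\|u\|_1\le m\}|\le (2m+1)^d$, a constant depending only on $m$ and $d$. A set $\gamma$ is $m$-connected precisely when the induced subgraph $G_m[\gamma]$ is connected. We may therefore take a spanning tree $T_\gamma$ of $G_m[\gamma]$, which has $n$ vertices and $n-1$ edges.

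Next we fix, once and for all, an ordering of the $D$ directions $u$ with $0<\|u\|_1\le m$. Starting at $x$, we run a depth-first search on $T_\gamma$: from the current vertex we move to the first unvisited tree neighbor in the fixed direction order, and we backtrack when none is left. This traversal visits every edge of $T_\gamma$ exactly twice, so it is a closed walk of $2(n-1)$ steps. We record it as the word $(s_1,\dots,s_{2(n-1)})$, where $s_i\in\{u\setslash 0<\|u\|_1\le m\}$ is the displacement at step $i$. The map $\gamma\mapsto(s_i)_i$ is injective for fixed $x$, because $\gamma$ is exactly the set of positions $x+s_1+\dots+s_k$ for $0\le k\le 2(n-1)$. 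The walk really does depend only on $\gamma$, since $T_\gamma$ can be chosen canonically, for example as the breadth-first-search tree with the same direction order. Injectivity in fact needs no such choice: it is enough to pick one word per $\gamma$.

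Counting words then gives
\begin{align*}
|\{\gamma\subset \Z^d\setslash \gamma\text{ is $m$-connected, }|\gamma|=n,\ x\in\gamma\}|\le D^{2(n-1)}\le (D^2)^n,
\end{align*}
so the lemma holds with $C=D^2$, uniformly in $x$ by translation invariance. The only point requiring care, rather than any real obstacle, is to make the search deterministic so that the encoding is well defined. The paper wants this explicit algorithm again for the coarse-graining argument in Appendix~\ref{Sec:AppCoarseGraining}, so it should be written out concretely: a stack-based procedure with the fixed direction order and the canonical spanning tree as above.
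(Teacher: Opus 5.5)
Your proof is correct and follows the paper's argument essentially verbatim. Both take a spanning tree of the $m$-adjacency graph on $\gamma$, run a depth-first traversal as a closed $m$-connected walk of length $2(n-1)$ from $x$, use injectivity because $\gamma$ is the range of the walk, and count walks with a bounded set of steps. The only difference is cosmetic: you encode the $D$ nonzero displacements, while the paper counts paths with $|\{y\in\Z^d : \|y\|_1\le m\}|$ choices per step.
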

\begin{proof}
    Let $\gamma\subset \Z^d$, which is $m$-connected, $|\gamma|=n$ and $x\in \gamma $. Consider $ \gamma $ as a graph, i.e., the vertices are $V=\gamma$ and the edges on $V$ are $E=\{\{x,y\}\subset  V\setslash x\overset{m}{\sim } y\}$. Since $ \gamma$ is $m$-connected also $(V,E)$ is connected and we can find a spanning tree $(V,\tilde E)$ of $(V,E)$ with $|\tilde E|=n-1$. Running depth-first search on $(V,\tilde E)$ can be interpreted as an $m$-connected path $\pi_\gamma\colon\{0,\ldots , 2(n-1)\}\rightarrow V$. More precisely, we start in $\pi_\gamma(0)=x\in \gamma$ and then follow edges, i.e., $\{\pi_\gamma (t),\pi_\gamma(t+1)\}\in \tilde E$ until we hit a leaf in $\tilde E$. Then, we go back from the leaf exactly in the opposite direction by only using edges that were followed exactly once until we find a vertex with an edge that we have not followed yet. Starting from there, we follow yet unfollowed edges until we find a leaf and go back again. This is done until we do not find an unfollowed edge while backtracking and we reach the starting vertex $\pi_\gamma(0)$. By this procedure, we follow every edge in $\tilde E$ exactly twice which is why $\pi_\gamma(0)=\pi_\gamma(2(n-1))$. By definition, $\pi_\gamma$ is an $m$-connected path. Furthermore, the mapping $\gamma\mapsto \pi _\gamma$ is injective. So,
    \begin{align*}
     |\{\gamma\subset \Z^d\setslash \gamma\text{ is $m$-connected, }|\gamma|=n, \, x\in \gamma \}|&\le 
         |\{\pi \colon \{0,\ldots, 2(n-1)\}\rightarrow V\setslash \pi \text{ is $m$-connected, }\pi(0)=x\}|\\
        &\le |\{y\in \Z^d \setslash \|y\|_1\le m\}|^{2(n-1)},
    \end{align*}
    which proves the claim. 
\end{proof}
We are ready to prove the Item~$(ii)$ in Lemma~\ref{Lemma:CountingSimpleSets}, which together with Lemma~\ref{Lemma:EmptyBoundary}, proves the entire lemma.
\begin{proof}[Proof of Lemma~\ref{Lemma:CountingSimpleSets}]
    Let $A\in \mathfrak{A}_2$ with $|\partial^2  A|=n$. From an intuitive standpoint it is clear that, in $d\ge 2$, for all $x\in \partial ^2 A$ we have $\|x\|_1\le n$. One way to see rigorously that there is at least one $x\in \partial^2 A$ with $\|x\|_1\le n$ is by starting from $o\in A$ and moving along the first unit direction until we find an $x\in \partial ^2A$. For every point we cross until we find this $x$, we can find, in $d \ge 2$, some pairwise different elements in $ \partial ^2A$ by moving into the second unit direction. Since there can be at most $n$ such points in $\partial ^2 A$, it holds that $\|x\|_1\le n$. This implies that 
    \begin{align*}
        |\{A\in \mathfrak{A}_2 \setslash  |  \partial ^2 A|=n \}|\le \sum_{x\colon \|x\|_1\le n}|\{A\in \mathfrak{A}_2\setslash |\partial^2 A|=n\text{ and }x\in \partial ^2 A \}|.
    \end{align*}
    Next, we prove that $A\mapsto \partial ^2 A$ is injective on $\{\mathfrak{A}_2\setslash |\partial ^2 A|=n\}$. Assume that $A,B\in \mathfrak{A}_2$ satisfy $\partial ^2A=\partial ^2 B$ and take $x\in  A$ but $x\notin B$. Since $B^\compl$ is $2$-connected there is a $2$-connected path with $\pi(t)\notin B$ for all $t$ such that $\pi$ starts in $x$ and ends in the complement of $\{-n,\ldots, n\}^d$. Therefore, there is a minimal $t^\star$ with $\pi(t^\star)\notin  A$. Thus, $ \pi(t^\star-1)\in \partial^2 A =\partial ^2 B$, which is a contradiction to $\pi (t^\star -1)\notin B$. Furthermore, by Lemma~\ref{Lemma:ConnectivityBoundary} we have that $\partial^2 A$ is $d$-connected. As a result, 
     \begin{align*}
        |\{A\in \mathfrak{A}_2\setslash |  \partial ^2 A|=n \}|\le \sum_{x\colon \|x\|_1\le n}|\{\gamma \subset \Z^d\setslash \gamma\text{ is $d$-connected, }|\gamma|=n\text{ and }x\in \gamma \}|
    \end{align*}
    and the claim follows by  Lemma~\ref{Lemma:CountingConnectedSets}.
\end{proof}
As noted earlier, Lemma~\ref{Lemma:CountingConnectedSets} is also useful for the coarse-graining argument in Appendix~\ref{Sec:AppCoarseGraining} due to the following. 
\begin{lemma}\label{Lemma:permutationBoundary}
    There is $C>0$ such that, for every $m$-connected $\gamma\subset \Z^d$ and all $x_1,\ldots,x_k\in \gamma$ pairwise different, there is a permutation $p\colon \{1,\ldots, k\}\rightarrow \{1,\ldots,k\}$ with
\begin{align*}
    \sum_{i=2}^k\|x_{p(i)}-x_{p(i-1)}\|_1\le C| \gamma|.
\end{align*}
\end{lemma}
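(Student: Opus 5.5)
The plan is to use the depth-first-search tour from the proof of Lemma~\ref{Lemma:CountingConnectedSets}. The constant $C$ will depend on $m$ (and $d$), which is harmless since $m$ is fixed in all applications.

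First, view $\gamma$ as the graph with vertex set $\gamma$ and edges between $m$-neighbours. Choose a spanning tree with $|\gamma|-1$ edges. Run depth-first search from $x_1$ to obtain the closed $m$-connected path $\pi_\gamma\colon\{0,\ldots,2(|\gamma|-1)\}\to\gamma$. This path visits every vertex of $\gamma$ and traverses each tree edge exactly twice.

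Second, define the permutation $p$ by ordering the points $x_1,\ldots,x_k$ according to the time of their first visit by $\pi_\gamma$. Let $t_i$ be the first time with $\pi_\gamma(t_i)=x_{p(i)}$. Then $t_1<t_2<\cdots<t_k$, and these times are well defined because the tour covers all of $\gamma$.

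Third, estimate consecutive distances by the portion of the tour between consecutive first visits. By the triangle inequality,
\begin{align*}
\|x_{p(i)}-x_{p(i-1)}\|_1\le \sum_{t=t_{i-1}}^{t_i-1}\|\pi_\gamma(t+1)-\pi_\gamma(t)\|_1\le m\,(t_i-t_{i-1}).
\end{align*}
Summing over $i$ telescopes:
\begin{align*}
\sum_{i=2}^k\|x_{p(i)}-x_{p(i-1)}\|_1\le m\,(t_k-t_1)\le 2m(|\gamma|-1)\le 2m|\gamma|.
\end{align*}
This gives the claim with $C=2m$.

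There is no real obstacle. The only points to check are that the depth-first-search tour has length $2(|\gamma|-1)$ and covers every vertex, both of which were already established in Lemma~\ref{Lemma:CountingConnectedSets}. If $\gamma$ is infinite the bound is trivial, so we may assume $\gamma$ is finite.
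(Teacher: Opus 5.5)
Your proposal is correct and is essentially the paper's proof. The paper also orders the points by increasing times along the depth-first-search tour $\pi_\gamma$ from Lemma~\ref{Lemma:CountingConnectedSets} and bounds the sum by the tour's total $\ell^1$-length, at most $2m(|\gamma|-1)$; your first-visit times just make those times explicit, and your $C=2m$ depends on $m$ exactly as the paper's constant does.
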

\begin{proof}
    Define $\pi_\gamma$ as in the proof of Lemma~\ref{Lemma:CountingConnectedSets}. Since $\pi_\gamma$ maps surjective onto $\gamma$, we can find $p$ such that there are increasing $n_1<\ldots <n_k$ with $x_{p(i)}=\pi_\gamma(n_i)$. By using that $\pi_\gamma$ is $m$-connected by construction, there is a constant $C>0$ such that
    \begin{align*}
    \sum_{i=2}^k\|x_{p(i)}-x_{p(i-1)}\|_1=\sum_{i=2}^k\|\pi(n_i)-\pi(n_{i-1})\|_1\le\sum_{i=1}^{2(|\gamma|-1)}\|\pi(i)-\pi(i-1)\|_1\le C|\gamma |.
    \end{align*}
    This finishes the proof. 
\end{proof}
Finally, we argue that different definitions of the boundary lead to equivalent sizes of the boundary, which we used in the Peierls argument in Section~\ref{sec_nonuniqueness}. In Appendix~\ref{Sec:AppCoarseGraining}, we will need a similar statement also for the \textit{edge boundary}, i.e.,
 \begin{align*}
        \partial^\prime A=\{(x,y)\in A\times A^\compl\setslash x\overset{1}{\sim}y\},
    \end{align*}
which is why we include it in the following lemma. 
\begin{lemma}\label{Lemma:Equivalence_Boundaries}
    There are $C_1,C_2>0$ such that for all $A\in \mathfrak{A}_2$ it holds that
    \begin{align*}
        |\partial^1 A|\le |\partial ^2 A|\le C_1 |\partial ^\prime A|\le C_2|\partial ^1 A|.
    \end{align*}
\end{lemma}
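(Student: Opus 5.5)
We prove the three inequalities separately, each by a local counting argument that charges boundary elements of one type to nearby boundary elements of another type. Every such charge will have bounded multiplicity, depending only on $d$.

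\emph{First inequality.} Since $x\overset{1}{\sim}y$ implies $x\overset{2}{\sim}y$, every $x\in\partial^1A$ also lies in $\partial^2A$. Hence $\partial^1A\subset\partial^2A$, and $|\partial^1A|\le|\partial^2A|$ holds trivially for any $A$.

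\emph{Third inequality.} Consider the map $(x,y)\mapsto x$ from $\partial^\prime A$ to $A$. Its image lies in $\partial^1A$ by definition. Each $x$ has at most $2d$ preimages, one for each neighbour $y$. Therefore $|\partial^\prime A|\le 2d\,|\partial^1A|$, so it suffices to take $C_2=2dC_1$.

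\emph{Middle inequality (main step).} We need $|\partial^2A|\le C_1|\partial^\prime A|$. Take $x\in\partial^2A$, so there is $y\notin A$ with $\|x-y\|_1\le2$. Choose a lattice path of length at most $2$ from $x$ to $y$. This path starts in $A$ and ends in $A^\compl$, so one of its (at most two) steps is an edge $(u,v)$ with $u\in A$, $v\in A^\compl$ and $u\overset{1}{\sim}v$, that is, $(u,v)\in\partial^\prime A$, with $\|u-x\|_1\le1$. Assign to $x$ such an edge. A given edge $(u,v)$ can be assigned only to points $x$ with $\|x-u\|_1\le1$, and there are at most $2d+1$ of these. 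Hence $|\partial^2A|\le(2d+1)|\partial^\prime A|$.

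This uses neither $d\ge2$ nor membership in $\mathfrak{A}_2$, so the lemma holds for all finite $A$. The only point needing care is the middle step, where the length-$2$ path could cross the boundary through $\Z^d$ in either of its steps. Taking the first step leaving $A$ gives the required edge in either case.
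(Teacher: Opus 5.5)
Your proposal is correct and follows essentially the same route as the paper. The first inequality is the trivial inclusion $\partial^1A\subset\partial^2A$. The middle one charges each $x\in\partial^2A$ to an edge of $\partial^\prime A$ whose inner endpoint is within $\ell^1$-distance $1$ of $x$, and the third is the projection $(x,y)\mapsto x$ with bounded fibres. Your first-exit argument along a path of length at most two spells out the existence of that edge, which the paper only asserts. Your constants $2d+1$ and $2d$ are the exact neighbour counts, whereas the paper uses the cruder (but still valid, since $2^d\ge 2d$) bound $2^d$.
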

\begin{proof}
    The first inequality follows by definition. Towards the second inequality, note that there is for every $x\in \partial^2 A$ an $e(x)=(e_1(x),e_2(x))\in \partial ^\prime A$ with $e_1(x)\, \smash{\overset{1}{\sim }}\,x$ or $e_1(x)=x$. Since every point has at most $2^{d}$ neighbors w.r.t.\ $\smash{\overset{1}{\sim}}$, the second inequality holds with $C_1=(2^d+1)$. The third inequality follows since for every $e=(x_e,y_e)\in \partial^\prime A$ also $x_e\in \partial ^1 A$ and there are at most $2^d$ edges $e$ with the same $x_e$.
\end{proof}
\subsection{Proof of Proposition~\ref{Prop:CoarseGraining}}\label{Sec:AppCoarseGraining}
In this section, we prove Proposition~\ref{Prop:CoarseGraining} by revisiting the coarse-graining arguments in~\cite{FiFrSp84} in the more general framework with $m=1,2$ and $d\ge 3$. Note that we formulated the claim by using the $1$-boundary, i.e., $|\partial^1 A|$ but by Lemma~\ref{Lemma:Equivalence_Boundaries} we can switch without loss of generality to $|\partial^\prime A|$. Now, a first idea could be to expand the probability of interest na\"{\i}vely by
\begin{align*}
    P\coloneq \P(\exists A\in \mathfrak{A}_m\colon |F_{A,\varepsilon} |\ge z |\partial^\prime A|)\le \sum_{ A\in \mathfrak{A}_m}\P(|F_{A,\varepsilon} |\ge z |\partial^\prime A|),
\end{align*}
but this estimate turns out to be too crude. Instead, \cite{FiFrSp84} defines
\begin{align} \label{Eq:ChoiceN}
    N(n)&=\lceil  \log_2(n^{1/d})\rceil
\end{align}
as well as certain coarse-graining mappings $A\mapsto A_k$ for every $k\in\Z_{\ge 0} $ with $A_0=A$ and $A_k\Subset \Z^d$, which will be defined properly later on. Then, one can expand
\begin{align*}
    P&\le\sum_{n=1}^\infty  \P(\exists A\in \mathfrak{A}_m, \, |\partial^\prime A|=n\colon |F_{A,\varepsilon} |\ge z n) \\
    &\le\sum_{n=1}^\infty  \P\Big(\exists A\in \mathfrak{A}_m, \, |\partial^\prime A|=n\colon \sum_{k=1}^{N(n)-1}|F_{A_{k},\varepsilon}-F_{A_{k-1},\varepsilon} |+|F_{A_{N(n)},\varepsilon}|\ge z n\sum_{k=1}^{N(n)}1/(2k)^2\Big),
\end{align*}
where we used that $\sum_{k=1}^\infty 1/(2k)^2=\pi^2/24\le 1$. Estimating further, we get that
\begin{align*}
    P&\le \sum_{n=1}^\infty\P\big(\exists A\in \mathfrak{A}_m,\, |\partial^\prime A|=n\colon  |F_{A_{N(n)},\varepsilon }|\ge zn/(2N(n))^2\big)\\
    &\hspace{3cm} +\sum_{n=1}^\infty \sum_{k=1}^{N(n)-1}\P\big(\exists A\in \mathfrak{A}_m,\, |\partial^\prime A|=n\colon  |F_{A_{k},\varepsilon }-F_{A_{k-1},\varepsilon }|\ge zn/(2k)^2\big) \\
    &\le\sum_{n=1}^\infty \sum_{\substack{B\colon \exists A\in \mathfrak{A}_m\\
    |\partial^\prime A|=n, B=A_{N(n)}} }\P\big(  |F_{B,\varepsilon }|\ge zn/(2N(n))^2\big)\\
    &\hspace{3cm} +\sum_{n=1}^\infty \sum_{\substack{(B_k)_k\colon \exists A\in \mathfrak{A}_m\\
    |\partial^\prime A|=n, B_k=A_{k}} }\sum_{k=1}^{N(n)-1}\P\big(  |F_{B_k,\varepsilon}-F_{B_{k-1},\varepsilon }|\ge zn/(2k)^2\big).
\end{align*}

As a result, by the Assumption~\eqref{Eq:PropertyF}, there are $c,C>0$ such that for all $z,\varepsilon >0$
\begin{equation}\label{Eq:Estimate}
\begin{split}
P&\le  C\sum_{n=1}^\infty \sum_{\substack{B\colon \exists A\in \mathfrak{A}_m\\
    |\partial^\prime A|=n, B=A_{N(n)}} }\exp\Big(-\frac{cz^2n^2}{N(n)^4\varepsilon^2 |B |}\Big) +C\sum_{n=1}^\infty \sum_{\substack{(B_k)_k\colon \exists A\in \mathfrak{A}_m\\
    |\partial^\prime A|=n, B_k=A_{k}} }\sum_{k=1}^{N(n)-1}\exp\Big(-\frac{cz^2n^2}{k^4\varepsilon^2 |B_k\ominus B_{k-1}|}\Big).
    \end{split}
\end{equation}
Certainly, the choice of $N(n)$ seems a bit arbitrary at this point and $A\mapsto A_k$ is not yet defined, but the roadmap towards the proof of Proposition~\ref{Prop:CoarseGraining} is already visible: First, find an upper bound on $|B|$ and $|B_{k}\ominus B_{k-1}|$ that is uniform over all $B_k$ that are attained as a coarse-grained version $A_k$ of $A$ and, second, find an upper bound on the number of summands, i.e.,
\begin{align}\label{Eq:DefinitionMnk}
    M_{n,k}=|\{B\colon \exists A\in \mathfrak{A}_m,  |\partial^\prime A|=n,\, B=A_{k} \}|.
\end{align}
Both bounds should be so sharp that they guarantee that the series above converges and is small for large~$z/\varepsilon$.

We start by properly defining the coarse graining $A\mapsto A_k$ that follows a majority rule. For $k\in \Z_{\ge 0}$, define the set of cubes with side length $2^k$ by
\begin{align*}
    \mathcal{C}_k=\{c_i\setslash i\in \Z^d \}, \quad \text{where }c_i=i2^k+\{0,\ldots 2^k-1\}^d.
\end{align*}
We say that $c\in \mathcal{C}_k$ is admissible w.r.t.~$A$ if 
\begin{align*}
    |c\cap A|\ge 2^{dk-1}.
\end{align*}
Finally, define $A_k$ as the union of all $c\in\mathcal{C}_k $ that are admissible w.r.t.\ $A$. It is important to note that $A_k$ need not be connected even if $A$ is connected. Towards the bounds on $|A_k|$ and $|A_{k}\ominus A_{k+1}|$, we first show that around the boundary of $A_k$ also $A$ must have a piece of boundary. 
\begin{lemma}\label{Lemma:CoarseGrainedBoundary}
Let $k\in\Z_{\ge0}$ and assume that $c_i, c_{j}\in \mathcal{C}_k$ are adjacent cubes, i.e., $i\overset{1}{\sim}j$, such that $c_i$ is admissible and $c_{j}$ is not. Then, there is constant $C>0$ independent of $c_i,c_j$ and $k$ such that for all $A\Subset \Z^d$
\begin{align*}
|\partial^\prime A \cap \{\{x,y\}\setslash x\in c_i\cup c_j\}|\ge C2^{(d-1)k}.
\end{align*}
\end{lemma}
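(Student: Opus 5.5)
Write $Q=c_i\cup c_j$ for the box of size $2^{k+1}\times 2^k\times\cdots\times 2^k$. The plan is a discrete isoperimetric argument on $Q$. Admissibility gives $|c_i\cap A|\ge 2^{dk-1}$, and non-admissibility of $c_j$ gives $|c_j\cap A|<2^{dk-1}$, so $|c_j\setminus A|>2^{dk-1}$. Hence both $A\cap Q$ and $Q\setminus A$ have at least half the volume of a single cube, i.e.\ at least $|Q|/4$. The edges of $\partial^\prime A$ counted in the statement include every nearest-neighbour edge of $Q$ with one endpoint in $A$ and the other in $A^\compl$. It therefore suffices to show the following: for a box $Q$ of comparable side lengths $\ell=2^k$ and $2\ell$, and any partition $Q=S\cup (Q\setminus S)$ with $\min(|S|,|Q\setminus S|)\ge |Q|/4$, the number of edges of $Q$ between $S$ and $Q\setminus S$ is at least $C\ell^{d-1}$.

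I would prove this relative isoperimetric inequality by a direct line-counting argument rather than by citing Loomis--Whitney. Fix one coordinate direction $e$ and decompose $Q$ into the $|Q|/L_e$ lines parallel to $e$, each containing $L_e\in\{\ell,2\ell\}$ points. A line that meets both $S$ and $Q\setminus S$ contains at least one boundary edge. Call a line \emph{full} if it lies entirely in $S$ and \emph{empty} if it lies entirely in $Q\setminus S$. If for some direction at least a fraction $1/8$ of the lines are mixed, we already obtain $\gtrsim \ell^{d-1}$ edges and are done. Otherwise, in every direction at least $7/8$ of the lines are pure. Since $|S|\ge |Q|/4$ and $|Q\setminus S|\ge|Q|/4$, in direction $e_1$ both full lines and empty lines occupy a positive fraction of the lines, say at least $1/16$ each.

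Next take a second direction $e_2\neq e_1$; this needs $d\ge 2$, which holds because the proposition assumes $d\ge 3$. Consider the two-dimensional slices spanned by $e_1$ and $e_2$. In a slice that contains both a full $e_1$-line and an empty $e_1$-line, every $e_2$-line of that slice (there are at least $\ell$ of them) crosses from $S$ to $Q\setminus S$, so it is mixed and contributes an edge. This yields at least $\ell$ edges per such slice. It remains to show that a positive fraction of the roughly $\ell^{d-2}$ slices are of this type, which would give $\gtrsim \ell^{d-1}$ edges. If instead most slices contain only full or only empty $e_1$-lines (besides mixed ones), then, since both types occur globally with fraction $\ge 1/16$, a positive fraction of the slices is essentially full and another positive fraction is essentially empty. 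I would then repeat the argument one dimension up, using a third direction to connect full slices with empty slices. This produces an induction on the dimension of the slices, keeping track of thresholds such as $7/8$ at each level.

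\textbf{Main obstacle.} The delicate step is organizing this induction so that the constants do not degrade in an uncontrolled way. A cleaner alternative, which I would try first, is to cite the standard discrete isoperimetric inequality for boxes: for $S\subset Q$ with $\min(|S|,|Q\setminus S|)\ge |Q|/4$, the edge boundary of $S$ inside $Q$ is at least $c\,|Q|^{(d-1)/d}$. One would then only need to check that the rescaling from the unit box is uniform in $k$. Either route gives the claim with $C$ depending only on $d$.
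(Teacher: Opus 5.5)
Your proposal is correct, but it takes a different route from the paper. The paper never proves a relative isoperimetric inequality on $Q=c_i\cup c_j$. It slides a window congruent to $c_j$ one lattice step at a time until it becomes $c_i$. Each step changes the $A$-density of the window by at most $2^{-k}$, so for $k\ge 2$ (smaller $k$ being trivial) some translate $W\subset c_i\cup c_j$ has $A$-density $p\in[1/4,3/4]$. It then applies the global edge-isoperimetric inequality $|\partial^\prime B|\ge 2d|B|^{(d-1)/d}$ to $B=A\cap W$ and $\tilde B=A^\compl\cap W$. Because cubes are exact minimizers, the $2d\,2^{k(d-1)}$ edges leaving $W$ can be subtracted exactly. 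This shows that twice the number of internal $A$--$A^\compl$ edges in $W$ is at least $2d\,2^{k(d-1)}\bigl(p^{(d-1)/d}+(1-p)^{(d-1)/d}-1\bigr)$, which is bounded below on $[1/4,3/4]$ by strict concavity. That argument is short and gives an explicit constant, but it uses the sharp Loomis--Whitney-type bound as a black box. Your line-counting argument is fully elementary and works directly on the non-cubic box, at the price of a worse constant that still depends only on $d$.

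The induction you flag as the main obstacle is easier than you fear, and your preliminary dichotomy about mixed lines in some direction is not needed. Prove by induction on $m\le d$ that an $m$-dimensional box with side lengths in $[\ell,2\ell]$ and a subset $S$ of density in $[\delta,1-\delta]$ has at least $c(m,\delta)\,\ell^{m-1}$ internal edges between $S$ and its complement in the box. Slice perpendicular to $e_m$. If at least a fraction $\delta/3$ of the slices have density in $[\delta/3,1-\delta/3]$, apply the case $m-1$ to them. Otherwise, averaging forces one slice of density $>1-\delta/3$ and one of density $<\delta/3$. Since every $e_m$-line meets each slice exactly once, more than a fraction $1-2\delta/3$ of the (at least $\ell^{m-1}$) $e_m$-lines then cross from $S$ to its complement. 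There are only $d$ levels, so the loss $\delta\mapsto\delta/3$ is harmless, and you apply the result with $\delta=1/4$.

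If you cite instead, use a genuinely discrete result (e.g.\ Bollob\'as--Leader for grids) rather than rescaling a continuum statement. Such results are usually stated for cubes $[n]^d$, so you still need a short reduction for the box $2^{k+1}\times 2^k\times\cdots\times 2^k$, for instance the sliding step above.
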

\begin{proof}
Without loss of generality, we can restrict to $k\in \Z_{>1}$. Indeed, we always know that for $c_i$ admissible and $c_j$ not, there are $x\in c_i\cap A$ and $y\in c_j\cap A^\compl$. In particular,  $|(c_i\cup c_j)\cap \partial ^1 A|\ge 1$, which proves the claim for $k\le 1$ by choosing $C$ small enough. 

Denote $\ell=i-j$ and note that $\ell$ is a unit vector modulo some sign. Furthermore, define the mapping $\psi\colon [0,2^k]\rightarrow \R$ by setting for $n=0,\ldots, 2^k$
\begin{align*}
    \psi(n)=|A\cap (2^kj+n\ell+[0,2^k)^d)|2^{-kd}
\end{align*}
and defining $\psi$ on $[0,2^k]$ as the linear interpolation of these points. In words, we slowly shift the cube $c_j$ so that it becomes $c_i$ and meanwhile count the fraction of points in the respective cube that are in $A$. Since $c_i$ is admissible while $c_j$ is not, it holds that 
\begin{align*}
    \psi(0)\le 1/2\quad \text{ and }\quad \psi(2^k)\ge 1/2.
\end{align*}
By the intermediate value theorem there exists a $t\in [0,2^k]$ such that $\psi(t)=1/2$ and therefore there exists an $n=0,\ldots, 2^k$ such that 
\begin{align*}
    1/4\le 1/2- 2^{(d-1)k}2^{-dk}\le \psi(n)\le 1/2+ 2^{(d-1)k}2^{-dk}\le 3/4. 
\end{align*}
The isoperimetric inequality on $\Z^d$, see, e.g., \cite[Theorem~B.78]{FrVe17}, states that for all $B\Subset \Z^d$
\begin{align}\label{Eq:IsoperimetricInequality}
    |\partial ^\prime B|\ge 2d |B|^{(d-1)/d}.
\end{align}
When applying this to $B\coloneq A\cap (2^kj+n\ell+[0,2^k)^d)$ and to $\tilde B\coloneq A^\compl \cap (2^kj+n\ell+[0,2^k)^d)$, it follows that
\begin{align*}
    (|\partial^\prime  B|+|\partial^\prime  \tilde B|)2^{-k(d-1)}&\ge 2d(|B|2^{-kd})^{(d-1)/d}+2d(|\tilde B|2^{-kd})^{(d-1)/d}= 2d\psi(n)^{(d-1)/d}+2d(1-\psi(n))^{(d-1)/d}.
\end{align*}
The function $p\mapsto p^{(d-1)/d}+(1-p)^{(d-1)/d}$ is minimized on $[1/4,3/4]$ at $p=1/4$. Therefore, when choosing 
\begin{align*}
    C\coloneq (2d(1/4)^{(d-1)/d}+2d(3/4)^{(d-1)/d}-2d)/2>0,
\end{align*}
it holds that
\begin{align*}
    |\partial^\prime  B|2^{-k(d-1)}+|\partial^\prime  \tilde B|2^{-k(d-1)}\ge 2d(1/4)^{(d-1)/d}+2d(3/4)^{(d-1)/d}=2 C+2d.
\end{align*}
The claim follows by 
\begin{align*}
     |\partial^\prime  A\cap \{\{x,y\}\setslash x\in c_i\cup c_j\}|&\ge |\partial^\prime  A \cap \{\{x,y\}\setslash x\in (2^kj+n\ell+[0,2^k)^d)\cap \Z^d\}|\\
     &\ge (|\partial^\prime  B|+|\partial^\prime \tilde B|-2d 2^{k(d-1)})/2\\
     &\ge C2^{k(d-1)},
\end{align*}
     as desired.
\end{proof}
With this argument at hand, we can bound $|A_k\ominus A_{k-1}|$ and $|A_k|$ in terms of $|\partial^\prime A|$.
\begin{lemma}\label{Lemma:Bounds_on_A_k}
There is $C>0$ such that for all $k\in \Z_{\ge 0}$ and $A\Subset \Z^d$
\begin{align*}
    |\partial^\prime A_k|\le C|\partial ^\prime A|,\qquad
    | A_{k+1}\ominus A_{k}|\le C2^k|\partial ^\prime A|\qquad \text{and }\qquad 
    |A_k|\le C|\partial^\prime  A|^{d/(d-1)}.
\end{align*}
\end{lemma}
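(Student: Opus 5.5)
The plan is to derive all three bounds from Lemma~\ref{Lemma:CoarseGrainedBoundary}, which charges every "bad" pair of adjacent $k$-cubes to a local piece of $\partial^\prime A$, combined with a bounded-overlap count.

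\emph{First bound.} Every edge of $\partial^\prime A_k$ lies on the common face of an adjacent pair $(c_i,c_j)$ with $c_i$ admissible and $c_j$ not. Each such pair contributes exactly $2^{(d-1)k}$ edges to $\partial^\prime A_k$. By Lemma~\ref{Lemma:CoarseGrainedBoundary}, the same pair also carries at least $C2^{(d-1)k}$ edges of $\partial^\prime A$ with an endpoint in $c_i\cup c_j$. A fixed cube belongs to at most $2d$ such pairs, so each edge of $\partial^\prime A$ is counted at most $2\cdot 2d$ times. Summing over all bad pairs gives
\begin{align*}
|\partial^\prime A_k|\le \#\{\text{bad pairs}\}\,2^{(d-1)k}\le \frac{4d}{C}|\partial^\prime A|.
\end{align*}

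\emph{Third bound.} The isoperimetric inequality~\eqref{Eq:IsoperimetricInequality} applied to $A_k$ gives $|A_k|\le (2d)^{-d/(d-1)}|\partial^\prime A_k|^{d/(d-1)}$. Inserting the first bound then yields $|A_k|\le C|\partial^\prime A|^{d/(d-1)}$.

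\emph{Second bound.} This is the step needing some care. Each $(k+1)$-cube $c$ is the disjoint union of $2^d$ subcubes from $\mathcal{C}_k$. Suppose all $2^d$ subcubes agree, i.e.\ all are admissible or all are not. Then $|c\cap A|$ is at least, respectively less than, half of $|c|$, so $c$ has the same status and $c\cap(A_{k+1}\ominus A_k)=\emptyset$. Hence $A_{k+1}\ominus A_k$ is contained in the union of those $(k+1)$-cubes whose subcubes are mixed, and
\begin{align*}
|A_{k+1}\ominus A_k|\le 2^{d(k+1)}\cdot\#\{\text{mixed }(k+1)\text{-cubes}\}.
\end{align*}
A mixed cube contains an adjacent admissible/non-admissible pair of $k$-subcubes, since the subcubes form a connected $2\times\cdots\times2$ grid. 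By Lemma~\ref{Lemma:CoarseGrainedBoundary}, that pair carries at least $C2^{(d-1)k}$ edges of $\partial^\prime A$ with an endpoint inside $c$. Distinct $(k+1)$-cubes are disjoint, so the overlap of these edge sets is bounded by $2$. Therefore
\begin{align*}
\#\{\text{mixed}\}\le \frac{2}{C}\,2^{-(d-1)k}|\partial^\prime A|,
\end{align*}
and $|A_{k+1}\ominus A_k|\le C'2^{k}|\partial^\prime A|$ as claimed.

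The main obstacle is the bookkeeping of multiplicities, that is, checking that each boundary edge of $A$ is charged only boundedly often. Disjointness of cubes within a fixed level together with nearest-neighbour adjacency makes this a constant depending only on $d$. The constant $C$ from Lemma~\ref{Lemma:CoarseGrainedBoundary} is uniform in $k$, so the resulting constants are uniform as well.
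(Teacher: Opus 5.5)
Your proof is correct and follows essentially the paper's route: the first two bounds charge each adjacent admissible/non-admissible pair of $k$-cubes (for the second bound, a pair lying inside a common $(k+1)$-cube) to $\partial^\prime A$ via Lemma~\ref{Lemma:CoarseGrainedBoundary} with bounded overlap, exactly as the paper does, and your ``mixed cube'' formulation simply handles both halves of the symmetric difference at once. The only deviation is in the third bound, where you apply the isoperimetric inequality to $A_k$ and then use the first bound, whereas the paper uses $|A_k|\le 2|A|$ (each admissible cube is at least half filled by $A$) and applies the isoperimetric inequality to $A$; both work.
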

\begin{proof}
With $C$ as in Lemma~\ref{Lemma:CoarseGrainedBoundary},
\begin{align*}
     |\partial^\prime A_k|=\sum_{\substack{\{i,j\}\subset \Z^d\colon i\overset{1}{\sim}j\\ c_i\text{ admissible}, \, c_j\text{ not }}}\hspace{-0.4cm}2^{(d-1)k}\le \sum_{\substack{\{i,j\}\subset \Z^d\colon i\overset{1}{\sim}j\\ c_i\text{ admissible}, \, c_j\text{ not }}}\hspace{-0.4cm} C^{-1}|\partial^\prime A \cap \{\{x,y\}\setslash x\in c_i\cup c_j\}|\le 4d C^{-1}|\partial ^\prime A|.
\end{align*}
Towards the second inequality, note that 
\begin{align*}
    |A_{k+1}\setminus A_k|\le \sum_{\substack{c\in\mathcal{C}_{k+1}\colon c\, \cap  A_{k+1}\setminus A_k\neq \emptyset  } }|c|= \sum_{\substack{c\in\mathcal{C}_{k+1}\colon c\,  \cap  A_{k+1}\setminus A_k\neq \emptyset  } }2^{(k+1)d}.
\end{align*}
But for every such $c\in \mathcal{C}_{k+1}$ as in the sum, we know that $c$ is admissible and we can find among the cubes in $\mathcal{C}_k$ that constitute $c$ a $\tilde c\in \mathcal{C}_k$ that is admissible. Furthermore, since $c$ is not a subset of $A_k$,  among the cubes in $\mathcal{C}_k$ that constitute $c$, there is also a $ c^\prime\in \mathcal{C}_k$ that is not admissible. In particular, we can choose adjacent cubes $c_i,c_j\in \mathcal{C}_k$ such that $c_i$ is admissible and $c_j$ is not, while still $c_i,c_j\subset c$. By Lemma~\ref{Lemma:CoarseGrainedBoundary},
\begin{align*}
     |A_{k+1}\setminus A_k|&\le 2^{d+k}\sum_{\{i,j\}\subset \Z^d\colon i\overset{1}{\sim}j, \,c_i\text{ admissible}, \, c_j\text{ not }}2^{(d-1)k}\\
     &\le C^{-1} 2^{d+k}\sum_{\{i,j\}\subset \Z^d\colon i\overset{1}{\sim}j, \,c_i\text{ admissible}, \, c_j\text{ not }} |\partial^\prime A \cap \{\{x,y\}\setslash x\in c_i\cup c_j\}|\\
     &\le 4d C^{-1}2^{d+k}|\partial ^\prime A|,
\end{align*}
where $c_i$ and $c_j$ in the sum are always in $\mathcal{C}_k$.
Analogously, we can write
\begin{align*}
 |A_{k}\setminus A_{k+1}|\le\sum_{\substack{c\in\mathcal{C}_{k+1}\colon c\, \cap  A_{k}\setminus A_{k+1}\neq \emptyset  } }2^{(k+1)d}  
\end{align*}
and find, for every $c$ in the sum, two adjacent cubes in $\mathcal{C}_k$ that are subsets of $c$, where one is admissible and the other one is not. By the same argument as above
\begin{align*}
    |A_{k}\setminus A_{k+1}|\le 4d C^{-1} 2^{d+k} |\partial ^\prime A|
\end{align*}
and the second claim follows. Finally, again by the isoperimetric inequality \eqref{Eq:IsoperimetricInequality} there is $C' >0$ such that
\begin{align*}
    |A_k|\le 2|A|\le C'|\partial ^\prime A|^{d/(d-1)},
\end{align*}
and the proof is finished.
\end{proof}
As mentioned in the beginning of this section, we also need a bound on $M_{n,k}$ defined in~\eqref{Eq:DefinitionMnk}. 
\begin{lemma}\label{Lemma:CountingCoarseGrains}
There exists $C>0$ such that for all $n\in \Z_{>0}$ and $1\le k\le N(n)$
\begin{align*}
    M_{n,k}\le \exp(Ckn2^{-(d-1)k}).
\end{align*}
\end{lemma}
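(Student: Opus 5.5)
We bound $M_{n,k}$ by encoding the coarse-grained set $A_k$ through its boundary cubes at scale $2^k$. The encoding uses Lemma~\ref{Lemma:permutationBoundary} to order these cubes along the connected contour $\partial^2 A$ (resp.\ $\partial^1 A$), so that consecutive cubes are close on average.

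\textbf{Step 1: a set $\mathcal{D}_k(A)$ of scale-$k$ cubes that determines $A_k$.} Call a cube $c_i\in\mathcal{C}_k$ \emph{boundary-adjacent} if it is admissible and has a non-admissible neighbour $c_j$, $j\overset{1}{\sim}i$. Given all boundary-adjacent cubes, $A_k$ is determined as the union of admissible cubes: $A_k$ is a union of cubes, and its cube-level inner boundary is exactly this set. One can even reconstruct $A_k$ as the set of cubes not separated from infinity by the cube boundary, since $A_k$ is finite.

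By Lemma~\ref{Lemma:CoarseGrainedBoundary}, each pair $(c_i,c_j)$ carries at least $C2^{(d-1)k}$ edges of $\partial'A$ with an endpoint in $c_i\cup c_j$. Each edge is counted for at most $4d$ pairs. Hence the number of boundary-adjacent cubes satisfies $|\mathcal{D}_k(A)|\le C' n 2^{-(d-1)k}$, where $n=|\partial'A|$.

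\textbf{Step 2: choose a point of the contour in each cube and order them.} Every boundary-adjacent cube $c_i$ meets $\partial^1 A$ either in $c_i$ itself or in its bad neighbour $c_j$. For each such cube, pick a point $x_i\in\partial^2 A$ within $\ell_1$-distance $d2^{k+1}$ of $c_i$. By Lemma~\ref{Lemma:ConnectivityBoundary}, $\gamma=\partial^2 A$ (or $\partial^1 A$ when $m=1$) is $d$-connected. Moreover $|\gamma|\le C n$ by Lemma~\ref{Lemma:Equivalence_Boundaries}.

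Apply Lemma~\ref{Lemma:permutationBoundary} to the distinct points $x_i$ (merging duplicates). This gives an ordering with total jump length $\sum\|x_{p(i)}-x_{p(i-1)}\|_1\le Cn$. Passing back to cube indices $i\in\Z^d$, the consecutive differences $\Delta_\ell$ of the cube indices then satisfy
\begin{align*}
\sum_\ell \|\Delta_\ell\|_1\le C(n2^{-k}+L),
\end{align*}
where $L\le C'n2^{-(d-1)k}$ is the number of cubes. For $d\ge 2$ we have $n2^{-k}\ge n2^{-(d-1)k}$, so only the first term matters.

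The first cube must also be located. Since $o\in A$ and $|\gamma|\le Cn$, some $x_i$ lies within distance $Cn$ of the origin. This gives at most $(Cn2^{-k}+1)^d$ choices, which is at most $e^{C k}$-type polynomial in $n$. Since $k\le N(n)$, we have $2^{k}\lesssim n^{1/d}$, so $n2^{-(d-1)k}\gtrsim n^{1/d}\gtrsim 2^k$, and this factor is absorbed into the claimed bound.

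\textbf{Step 3: count.} The remaining task is to count sequences of $L$ vectors in $\Z^d$ with total $\ell_1$-length at most $S=C n 2^{-k}$. By stars and bars, the number of such sequences is at most $2^{dL}\binom{S+dL}{dL}\le \big(Ce(S+dL)/(dL)\big)^{dL}$.

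With $L\asymp n2^{-(d-1)k}$ (also summing over all $L$ up to this bound), the ratio is $S/L\asymp 2^{(d-2)k}$. Therefore
\begin{align*}
\log M_{n,k}\le C L\log(2^{(d-2)k})+C L\le C k n2^{-(d-1)k},
\end{align*}
which is the claim. Here $k\ge1$ absorbs the constants.

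\textbf{Main obstacle.} The delicate step is Step 2. Admissible boundary cubes need not lie on $\gamma$ itself, only near it, and $A_k$ may be disconnected. The remedy is to route through the connected fine contour via Lemma~\ref{Lemma:permutationBoundary} instead of connecting the cubes to each other. The second point to check carefully is that the boundary cubes really reconstruct $A_k$ injectively.
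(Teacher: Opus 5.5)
\textbf{Gap in Step~1.} The set $\mathcal{D}_k(A)$ of admissible cubes having a non-admissible neighbour does \emph{not} determine $A_k$. An inner vertex boundary determines a finite set only up to its holes. Your reconstruction, which fills in everything enclosed by the cube boundary, returns the hole-free hull of $A_k$ rather than $A_k$ itself.

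Holes do occur, because the majority rule does not preserve the simple connectivity of $A$; the paper even stresses that $A_k$ need not be connected. Here is a concrete example with $d=3$, $k=1$.
\begin{itemize}
\item Take the block $\{0,\ldots,5\}^3$ and remove the central cube $\{2,3\}^3$ together with the two tunnel sites $(2,2,0),(2,2,1)$.
\item The resulting $A$ lies in $\mathfrak{A}_1\cap\mathfrak{A}_2$.
\item The cube crossed by the tunnel keeps $6\ge 4$ sites, so it stays admissible, while the central cube is not admissible.
\item Hence $A_1$ is a shell of $26$ cubes with a hole.
\item Its set $\mathcal{D}$ is the same $26$ cubes as for the full block, whose coarse graining is the filled $3\times3\times3$ block of cubes.
\item Short hairs or notches, which do not change admissibility, let you equalize $|\partial' A|$ between the two examples.
\end{itemize}
So $B\mapsto\mathcal{D}$ is not injective on the sets counted by $M_{n,k}$, and counting the possible $\mathcal{D}$'s does not bound $M_{n,k}$.

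\textbf{Repair.} For each cube of $\mathcal{D}$, also record which of its $2d$ neighbours are non-admissible. This is the directed cube-edge boundary $\mathcal{E}(B)$ that the paper uses. It does determine $B$: along any path from a given cube to infinity, the orientation of the first crossed edge of $\mathcal{E}(B)$, if there is one, tells you whether the starting cube lies in $B$. The extra information costs at most a factor $2^{2dL}=\exp(CL)$, which is absorbed in $\exp(CkL)$.

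\textbf{Comparison with the paper.} With this patch, the rest of your argument is sound and is a genuinely different, leaner route than the paper's.
\begin{itemize}
\item The paper splits the cube boundary into $1$-connected components $\gamma_j$ and anchors each one near a point $x_j$ of the fine contour. It then pays a lattice-animal factor $C^{\ell_j}$ per component, plus stars-and-bars counts for the sizes $\ell_j$ and the jumps $d_j$.
\item You instead encode every boundary cube by its displacement from its predecessor in the contour-induced order. Since the average displacement is $\lesssim 2^{(d-2)k}$, this costs $O(\log(S/L))=O(k)$ per cube.
\end{itemize}
Your route avoids the component decomposition and the lattice-animal count altogether, and yields the same bound $\exp(Ckn2^{-(d-1)k})$. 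Your choice of $\partial^2 A$ as the $d$-connected contour when $m=2$ is also exactly what Lemma~\ref{Lemma:ConnectivityBoundary} provides.
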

\begin{proof}In this proof the constant $C$ may change from line to line. Let $B$ be a coarse graining $B=A_k$ of some $A\in \mathfrak{A}_m$ with $|\partial^\prime A|=n$. Define the \textit{directed edge boundary} of $B$ as 
\begin{align*}
    \mathcal{E}(B)=\{(i,j)\in \Z^{2d}\setslash c_i\subset B, \, c_j\cap B=\emptyset,\, i\overset{1}{\sim}j\}
\end{align*}
and note that $\mathcal{E}$ is injective on $\{B \colon \exists A\in \mathfrak{A}_m,  |\partial^\prime A|=n,\, B=A_{k} \}$. Indeed, if $\mathcal{E}(B)=\mathcal{E}(\tilde B)$ and $B\neq \emptyset$, we can find $(i,j)\in \mathcal{E}(B)$. For every other $\tilde i \in \Z^d$, we can check if $c_{\tilde i}\subset B$ or $c_{\tilde i}\cap B=\emptyset $ by following any $1$-connected path $\pi$ from $\tilde i$ to $i$ and checking if $(\pi(t+1),\pi(t))\in \mathcal{E}(B)$ for some $t$ while $(\pi(s),\pi(s+1))\notin \mathcal{E}(B)$ for all $s<t$. Since $\mathcal{E}(B)=\mathcal{E}(\tilde B)$, this implies that  $c_{\tilde i}\subset B$ if and only if $c_{\tilde i}\subset \tilde B$. 

Next, let $\alpha (B)\in \Z_{\ge 0}$ and $\gamma_1(B),\ldots, \gamma_{\alpha(B)}(B)\Subset \Z^d$ as well as $\mathfrak{n}_k(B)\colon \gamma_k(B)\rightarrow \{A\Subset\Z^d,\, A\neq \emptyset\}$ for $k=1,\ldots, \alpha(B)$ such that $\gamma_1(B),\ldots, \gamma_{\alpha(B)}(B)$ are $1$-connected, non-empty and pairwise different and such that
\begin{align*}
    \mathcal{E}(B)=\bigcup_{k=1}^{\alpha(B)}\{(i,j)\setslash i \in \gamma_k(B),\, j\in \mathfrak{n}_k(i)\}.
\end{align*}
Overall, 
\begin{align*}
    B\mapsto \big\{\big(\gamma_1(B),\mathfrak{n}_1(B)\big),\ldots, \big(\gamma_{\alpha(B)}(B),\mathfrak{n}_{\alpha(B)}(B)\big)\big\} 
\end{align*}
is injective on $\{B \colon \exists A\in \mathfrak{A}_m,  |\partial^\prime A|=n,\, B=A_{k} \}$ and it suffices to bound the number of possible $\gamma_1(B),\ldots, \gamma_{\alpha(B)}(B),\mathfrak{n}_1(B),\ldots, \mathfrak{n}_{\alpha(B)}(B)$ that are attainable for a $B$ that is a coarse-graining of some set $A$. By the first inequality of Lemma~\ref{Lemma:Bounds_on_A_k}, there exists $C>0$ such that 
\begin{align}\label{Eq:Countingboundary}
    Cn\ge |\partial ^\prime B|\ge \sum_{k=1}^{\alpha(B)}|\gamma_k(B)| 2^{(d-1)k},
\end{align}
which means in particular $\alpha(B)\le \lfloor Cn2^{-(d-1)k}\rfloor\eqcolon \alpha_{n,k}$. Also, we know that $\|\boldsymbol{\ell}_\alpha\|_1\le \alpha_{n,k}$ when denoting $\ell_k=| \gamma_k(B)|$ and $\boldsymbol{\ell}_\alpha=(\ell_1,\ldots, \ell_\alpha)$. To bound the number of possible $\gamma_k(B)$, we can fix an arbitrary $i_k(B)\in \gamma_k(B)$ and then find 
\begin{align*}
    x_k(B)\in \partial^1 A\cap \Big(c_{i_k(B)}\cup \bigcup_{j\in \mathfrak{n}_k(i_k(B))}c_j\Big).
\end{align*}
Indeed, by definition of $\gamma_k(b)$ we have that $c_{i_k(B)}$ is admissible w.r.t.~$A$ and there is $j\in \mathfrak{n}_k(i_k(B))$ such that $c_j$ is not admissible w.r.t.~$A$. In particular, there are $x,y\in c_{i_k(B)}\cup c_j$ such that $x\in A$ but $y\notin A$. By following some $1$-connected path within $c_{i_k(B)}\cup c_j$ from $x$ to $y$, we can find $x_k(B)\in \partial ^1 A$. By Lemma~\ref{Lemma:ConnectivityBoundary}, we know that $\partial^1 A$ is $d$-connected. By Lemmas~\ref{Lemma:permutationBoundary}~and~\ref{Lemma:Equivalence_Boundaries}, after  reordering the indices in $\gamma_1(B),\ldots, \gamma_{\alpha(B)}(B)$, if necessary, we can assume without loss of generality that
\begin{align}\label{Eq:BoundingDifferencexj}
    \sum_{i=2}^{\alpha(B)}\| x_i(B)- x_{i-1}(B)\|_1\le Cn.
\end{align}
Finally, the number of possible $\mathfrak{n}_1(B),\ldots ,\mathfrak{n}_{\alpha(B)}(B)$ is bounded by $\exp(C\alpha_{n,k})$.
Overall, with the notation $\boldsymbol{d}_\alpha=(d_1,\ldots, d_\alpha)$ and $\boldsymbol{x}_\alpha=( x_1,\ldots,  x_\alpha)$, 
\begin{align*}
    M_{n,k}&\le \exp(C\alpha_{n,k})\sum_{\alpha =1}^{\alpha_{n,k}} \quad \sum_{\boldsymbol{\ell}_\alpha\colon \|\boldsymbol{\ell}_\alpha\|_1\le \alpha_{n,k}}\quad\sum_{\boldsymbol{d}_\alpha\colon  \|\boldsymbol{d}_\alpha\|_1\le C n}\quad \sum_{\boldsymbol{x}_\alpha\colon \| x_j- x_{j-1}\|_1 = d_j\, \forall j,\, \|x_1\|_{1}\le n} \\
    &\hspace{3cm}\prod_{j=1}^{\alpha} |\{\gamma\subset \Z^d\setslash \gamma \text{ is $d$-connected},\exists i \in \gamma, \,\exists j\colon \|i-j\|_1\le 1,\, x_j\in c_j \text{ and }|\gamma|=\ell_j \}|.
\end{align*}
By Lemma~\ref{Lemma:CountingConnectedSets},
\begin{align*}
    M_{n,k}\le \exp(C\alpha_{n,k})\alpha_{n,k}\, |\{\boldsymbol{\ell}_{\alpha_{n,k}}\colon \|\boldsymbol{\ell}_{\alpha_{n,k}}\|_1\le n\}|\, \sum_{\boldsymbol{d}_{\alpha_{n,k}}\colon \|\boldsymbol{d}_{\alpha_{n,k}}\|_1\le Cn}|\{\boldsymbol{x}_{\alpha_{n,k}}\colon \|x_j-x_{j-1}\|_1\le d_j,\, \|x_1\|_{1}\le n\}|.
\end{align*}
By a stars and bars argument and by Stirling's formula,
\begin{align}\label{Eq:BarsAndStars}
    |\{\boldsymbol{\ell}_{\alpha_{n,k}}\colon \|\boldsymbol{\ell}_{\alpha_{n,k}}\|_1=n\}|\le \frac{ n^{\alpha_{n,k}}}{\alpha_{n,k}!}\le \exp(C\alpha_{n,k}\log(n/\alpha_{n,k}))\le \exp(Ck\alpha_{n,k}).
\end{align}
Here and in the following we use that 
\begin{align*}
    \alpha_{n,k}\ge \alpha_{n,N(n)}\ge C n 2^{-(d-1)N(n)}\ge Cn^{1-(d-1)/d} =C n^{1/d}
\end{align*}
to neglect lower order growth in $n$ by using a larger constant $C$. Observe that
\begin{align*}
    |\{\boldsymbol{x}_{\alpha_{n,k}}\colon \|x_j-x_{j-1}\|_1\le d_j,\, \|x_1\|_{1}\le n\}|\le n ^dC^{\alpha_{n,k}}\prod_{j=2}^{\alpha_{n,k}}d_j^d.
\end{align*}
The right-hand side is maximal under the condition $\|\boldsymbol{d}_{\alpha_{n,k}}\|_1\le Cn$ for $d_i=Cn/\alpha_{n,k}$ for all $i$. Thus,
\begin{align*}
\sum_{\boldsymbol{d}_{\alpha_{n,k}}\colon \|\boldsymbol{d}_{\alpha_{n,k}}\|_1\le Cn}|\{\boldsymbol{x}_{\alpha_{n,k}}\colon \|x_i-x_{i-1}\|_1\le d_i\}|&\le |\{\boldsymbol{d}_{\alpha_{n,k}}\colon \|\boldsymbol{d}_{\alpha_{n,k}}\|_1\le C  n\}|\, \exp(C \alpha_{n,k}\log(n/\alpha_{n,k}))\\
&\le \exp(Ck\alpha_{n,k}),
\end{align*}
where we used again \eqref{Eq:BarsAndStars} for the last estimate. Overall, we proved 
$M_{n,k}\le \exp(Ck\alpha_{n,k})$,
as required.
\end{proof}
Note that the above lemma is the only place in the proof of Proposition~\ref{Prop:CoarseGraining}, more precisely \eqref{Eq:BoundingDifferencexj}, where we use that $A$ is $m$-simply connected. This is the reason why the argument is so flexible in the choice of connectedness.
\begin{proof}[Proof~of~Proposition~\ref{Prop:CoarseGraining}]
    We use the third inequality in Lemma~\ref{Lemma:Bounds_on_A_k} to bound the first sum in \eqref{Eq:Estimate} and the second inequality in Lemma~\ref{Lemma:Bounds_on_A_k} for the second sum. Furthermore, using Lemma~\ref{Lemma:CountingCoarseGrains} we arrive at
    \begin{align*}
    P&\le \sum_{n=1}^\infty M_{n,{N(n)}}\exp\Big(-\frac{cz^2n^{2-d/(d-1)}}{N(n)^4\varepsilon^2} \Big)+\sum_{n=1}^\infty \sum_{k=1}^{N(n)-1} M_{n,k}\exp\Big(-\frac{cz^2n}{k^4\varepsilon^2 2^k}\Big)\\
    &\le \sum_{n=1}^\infty \exp\Big(CN(n)n2^{-(d-1)N(n)}-\frac{cz^2n^{2-d/(d-1)}}{N(n)^4\varepsilon^2} \Big)+ N(n)\exp\Big(CN(n)n2^{-(d-1)N(n)}-\frac{cz^2n}{N(n)^4\varepsilon^2 2^{N(n)}}\Big)             
    \end{align*}
    for some $c>0$.
    By the choice of $N(n)$ in \eqref{Eq:ChoiceN},\
    \begin{align*}
        2^{-(d-1)N(n)}\le 2^{d-1} n^{-(d-1)/d}\quad \text{and}\quad 2^{-N(n)}\le 2 n^{-1/d}.
    \end{align*}
    Furthermore for $d\ge 3$, it holds that
    \begin{align*}
       2-d/(d-1)=1-1/(d-1)> 1/d \quad \text{and}\quad  1-1/d> 1/d=1-(d-1)/d.
    \end{align*}
    Note that this is the only place in the whole argument, where we use $d\ge 3$. Using both of the above and that $N(n)$ itself grows logarithmically, the negative terms in the exponentials dominate the positive ones and there are $\tilde C, C',\tilde c >0$ such that
    \begin{align*}
        P&\le  C'\sum_{n=1}^\infty \exp(-cz^2n^{2-d/(d-1)}/(N(n)^4\varepsilon^2) )+ \exp(- cz^2 n^{(d-1)/d}/(N(n)^4\varepsilon^2))\le\tilde  C\exp(-\tilde c z^2/\varepsilon^2),
    \end{align*}
    as desired. 
\end{proof}

\section*{References}
\renewcommand*{\bibfont}{\footnotesize}
\printbibliography[heading = none]


\end{document}